\documentclass[12pt,leqno,a4paper]{article}
\usepackage{color}
\usepackage{mathrsfs}
\usepackage{tikz}
\usepackage[pdfborder={0 0 0}]{hyperref}
\usepackage{framed}
\usepackage{amssymb}
\usepackage{amsmath}
\usepackage{amsthm}
\usepackage{ bbm }
\usepackage[shortlabels]{enumitem}

\newtheorem{theorem}{Theorem}
\numberwithin{theorem}{section}
\newtheorem{proposition}[theorem]{Proposition}
\newtheorem{lemma}[theorem]{Lemma}

\theoremstyle{remark}

\theoremstyle{definition}
\newtheorem{remark}[theorem]{Remark}
\newtheorem{definition}[theorem]{Definition}

\numberwithin{equation}{section}

\newcommand\set[1]{\left\{\,#1\,\right\}}		
\newcommand\abs[1]{\left|#1\right|}				
\newcommand\norm[1]{\left\Vert#1\right\Vert}	
\newcommand{\R}{\mathbb{R}}
\newcommand{\C}{\mathbb{C}}

\newcommand{\T}{\mathbb{T}}

\newcommand{\cC}{{\mathcal C}}

\newcommand{\cU}{{\mathcal U}}

\newcommand{\pa}{\partial}

\DeclareMathOperator{\sign}{sign}				
\DeclareMathOperator{\divv}{div}				


\DeclareMathOperator*{\im}{Im}
\DeclareMathOperator*{\re}{Re}

\voffset -20pt
\hoffset 17pt
\topmargin 0pt 
\textwidth 0.7\paperwidth
\marginparsep 0pt
\marginparwidth 0pt
\textheight 636pt
\oddsidemargin 0mm

\begin{document}
\title{Entropy solutions to macroscopic IPM}
\author{\'Angel Castro \and Daniel Faraco \and Bj\"orn Gebhard }
\date{}
\maketitle

\begin{abstract}
We investigate maximal potential energy dissipation as a selection criterion for subsolutions (coarse grained solutions) in the setting of the unstable Muskat problem. We show that both,
imposing this criterion on the level of convex integration subsolutions, and the strategy of Otto based on a relaxation via minimizing movements, lead to the same nonlocal conservation law. Our main result shows that this equation admits an entropy solution for unstable initial data with an analytic interface.
\end{abstract}
%
%

\tableofcontents

\section{Introduction}\label{sec:introduction}

An outstanding open problem in hydrodynamics is the description of unstable interface configurations quickly leading to turbulent regimes. Examples are the thoroughly studied Saffman-Taylor \cite{saffman}, Rayleigh-Taylor \cite{rayleigh1883,taylor1950} and Kelvin-Helmholtz \cite{kelvinH} instabilities. In these unstable regimes Eulerian quantities, such as the velocity field, are very irregular and the Lagrangian trajectories typically  fail to be uniquely defined. Hence uniqueness is not to be expected at the microscopical level, a phenomenon that in the physics literature is known as spontaneous stochasticity \cite{physics_paper}, and instead it will be desirable to have a well-defined deterministic evolution at the macroscopical level. The current paper provides such a macroscopical evolution  in the context of the incompressible porous medium equation derived from maximal potential energy dissipation.

\subsection{IPM and interfaces}

Throughout the article, we will consider the incompressible porous media equation (IPM), given by
\begin{align}\begin{split}\label{eq:ipm}
\partial_t \rho +\divv(\rho v) &=0,\\
\divv v&=0,\\
v&=-\nabla p-\rho e_2,
\end{split}
\end{align}
on the  two-dimensional periodic strip $\T\times\R$, $\T$ denotes the flat one-torus of length $2\pi$, and over a time interval $[0,T)$, $T>0$. 
Here the (normalized) fluid density $\rho:[0,T)\times\T\times\R\rightarrow\R$, the velocity $v:[0,T)\times\T\times\R\rightarrow\R^2$ and the pressure $p:[0,T)\times\T\times\R\rightarrow\R$ are the unknowns, and $-e_2:=(0,-1)^T\in\R^2$ is the direction of gravity.

The model describes the evolution of a two-dimensional density dependent incompressible fluid in an overdamped scenario (the porous medium) and  under the influence of gravity. It consists of the law for mass conservation, the incompressibility condition for the velocity field and Darcy's law (see \cite{darcy,Muskat,saffman,allaire,Palencia,Tartar} for more physical background). Constants such as mobilities (viscosities), permeability of the medium, and  gravity have been set to $1$. System \eqref{eq:ipm} also models the motion of an incompressible and viscous fluid in a Hele-Shaw cell \cite{saffman}, a different physical scenario with the same mathematical formulation.  

Concerning initial conditions we are interested in the unstable interface case, i.e.
\begin{equation}\label{eq:initial_data}
\rho_0(x)=\begin{cases}
+1,&x_2>\gamma_0(x_1),\\
-1,&x_2<\gamma_0(x_1)
\end{cases}
\end{equation}
for a graph $\gamma_0:\T\rightarrow \R$.

Generally speaking, if the initial data $\rho_0$ is sufficiently regular it is well-known that the IPM equation has a unique regular local in time solution, see \cite{Cordoba_Gancedo_Orive,Castro_Cordoba_Gancedo_Orive}. However, the problem of formation of singularities versus global existence is still open and only partial results are known. For example, the existence of solutions with Sobolev norms unbounded in time has recently been proven in \cite{KYY}.

In the case of discontinuous initial data of the type \eqref{eq:initial_data} the situation is even more subtle as the following dichotomy shows: If the denser fluid is below the lighter one, then the problem is stable and the existence of solutions is well-known (see Section \ref{sectionmuskat}). However, if the lighter fluid is below the heavier one, the problem is ill-posed (at least in the Muskat sense, cf. Section \ref{sectionmuskat}, and in the sense of bounded weak solutions, cf. Section \ref{sec:ipm_as_differential_inclusion}). 

\subsection{Macroscopic IPM}

In spite of this difficulty, there have been several attempts to understand the evolution of such an initial configuration at least in the coarse grained picture. Namely, on one hand Felix Otto discovered that in Lagrangian formulation IPM is a gradient flow and he suggested in the unstable situation a relaxation based on the corresponding minimizing movements scheme  in the Wasserstein setting \cite{otto1999} (JKO scheme). On the other hand in \cite{Cordoba_Faraco_Gancedo} it was shown that IPM can be recast as a differential inclusion in the Tartar framework and therefore fits the De Lellis-Sz\'ekelyhidi adaptation of convex integration in hydrodynamics \cite{DeL-Sz-DI,DeL-Sz-Adm}. Subsequently, the full relaxation of the differential inclusion has been computed in \cite{Sz-ipm} leading to a concept of coarse-grained solutions (subsolutions in the convex integration jargon). In Section  \ref{sec:relaxation} we present precise definitions and review the historical landmarks of the theory. As an overview the reader can also find two diagrams, one concerning the various notions of (sub)solutions occurring in the paper and their relations, see Figure \ref{fig:table_of_solutions}, and another concerning the steps of the relaxations, see Figure \ref{fig:epic_diagram}.

Let us remark that it also has been proven in \cite{Sz-ipm} that in the case of a flat interface Otto's relaxation selects a convex integration subsolution, which turns out to be the global in time entropy solution to a one-dimensional Burgers equation, reconciling both relaxation theories. In the case of a non-flat interface, the theory of convex integration starting from \cite{Castro_Cordoba_Faraco} has provided a number of subsolutions \cite{ACF,Foerster-Sz,Noisette-Sz,Castro_Faraco_Mengual_2}. In all these situations the starting point is an ansatz for the coarse grained density $\bar \rho$ and for the growth of the mixing zone motivated in analogy to the flat case. These subsolutions show that also on a macroscopical level plenty of different evolutions are possible, such that a selection, which so far has not been available, has to be made for an attempt to claim uniqueness.

The aim of this paper is to use maximal potential energy dissipation as a selection criterion. Since as discovered by Otto in Lagrangian coordinates IPM is a gradient flow 
with respect to potential energy, this seems a natural approach. In any case, we first revisit the strategy proposed by Otto in \cite{otto1999} in the case of 
non flat interfaces (the scheme is explained in Section \ref{sec:otto} and Appendix \ref{sec:otto_relaxation_appendix}).
We then  reconcile it by selecting the subsolution in the convex integration terminology which at each time instant dissipates the most potential energy. 

It can be shown, that both, the relaxed minimizing movements scheme provided by  \cite{otto1999} (at least formally) and imposing maximal potential energy dissipation among convex integration subsolutions (rigorously), lead to the following equation 
\begin{align}\begin{split}\label{eq:subequation_inintroduction}
    \partial_t\rho+\divv\left(\rho v\right)+\partial_{x_2}(\rho^2)&=0,\\
    \divv v&=0,\\
    v&=-\nabla p-\rho e_2,
    \end{split}
\end{align}
which will be referred to as  {\it macroscopic IPM}. In  Section \ref{sec:relaxation} we explain in detail how the Muskat problem, the theory of subsolutions, convex integration for IPM and Otto's relaxation are connected. The derivation of \eqref{eq:subequation_inintroduction} from the 
JKO scheme is known to experts, but as far as we are aware the arguments around maximal potential energy dissipation for subsolutions are new.
In particular, it will be explained in which way \eqref{eq:subequation_inintroduction} can offer a  selection criterion for IPM subsolutions based on a natural extension of the gradient flow structure of IPM.

(Entropy) solutions to macroscopic IPM are subsolutions to IPM as long as they exist. By introducing a parameter $0<\mu<1$ in the first equation,
\begin{align*}
    \partial_t\rho+\divv(\rho v+\mu\rho^2e_2)=0
\end{align*}
macroscopic IPM produces strict subsolutions. Hence by 
a suitable $h$-principle, cf. Theorem \ref{thm:CI}, the time of existence of microscopical solutions to IPM
will be dictated by the time of existence of \eqref{eq:subequation_inintroduction}. This is in stark contrast to \cite{Castro_Cordoba_Faraco} where a rarefaction like ansatz with a prescribed speed of opening of the mixing zone is made and a resulting time and space dependent parameter $\mu(t,x)$ is derived which is smaller than $1$ just for short times.

In general we emphasize that contrary to the procedure of \cite{Castro_Cordoba_Faraco} (and also of \cite{ACF,Foerster-Sz,Noisette-Sz,Castro_Faraco_Mengual_2}), i.e. deriving a macroscopic equation from an ansatz, we here follow the reversed process, i.e. we consider based on a selection a fixed equation for the macroscopic evolution and derive properties of its solutions, such as the speed of opening of the mixing zone. We believe that this is a necessity when it comes to potential applications addressing for instance the prediction of a unique mixing zone evolution.

\subsection{Existence result and idea of proof}

The bulk of the paper is devoted to proving the existence of an entropy solution for \eqref{eq:subequation_inintroduction} with \eqref{eq:initial_data} as initial data. System \eqref{eq:subequation_inintroduction} can be written as a single  scalar nonlocal hyperbolic conservation law,
\begin{align}\label{eq:nonlocal_hyperbolic_law}
    \partial_t\rho+\divv(\rho T[\rho])+\partial_{x_2}(\rho^2)=0,
\end{align}
where $v=T[\rho]$ is a $0$-order singular integral operator. Contrary to other nonlocal conservation laws with a more regular nonlocal feedback, see \cite{amadori2012,amorim2012,betancourt2011,blandin2016,colomboR2012} for examples and \cite{nonlocal_overview} for a recent overview, a general existence and uniqueness theory for nonlocal terms as in \eqref{eq:nonlocal_hyperbolic_law} is not available. We bypass this by using the structure of the two-phase initial data \eqref{eq:initial_data}. This approach, born out of necessity, not only provides us with the existence of a solution, but in addition allows us to learn about certain properties of it. More precisely, by showing that the Burgers' term $\partial_{x_2}(\rho^2)$  is able to  tear up the initial discontinuity of the density even in the presence of the incompressible velocity $v$, we will prove the existence of a local in time solution which is Lipschitz for $t>0$. This fact is highly nontrivial and presents many technical difficulties that will be tackled in Sections \ref{sec:level_set_formulation} to \ref{sec:justification}, which together form the proof of our main theorem and will be described below. A careful statement of our main theorem itself, containing further properties of the solution, can be found in Section \ref{sec:main_result}.We have preferred to state the existence theorem for \eqref{eq:subequation_inintroduction} after the reader is hopefully convinced by Section \ref{sec:relaxation} that \eqref{eq:subequation_inintroduction} renders a macroscopic description for the unstable Muskat problem consistent with maximal potential energy dissipation.

One main ingredient of our proof is to look at the evolution of level sets of the density $\rho$ in suitably scaled coordinates and to adjust properly to leading order terms of this evolution. These steps carried out in Section \ref{sec:level_set_formulation}, reduce the initial value problem \eqref{eq:initial_data}, \eqref{eq:subequation_inintroduction} to a fixed point problem of the type
\begin{align}\label{eq:fixed_point_equation_inintroduction}
    \eta(t,y)=\frac{1}{t^{1+\alpha}}\int_0^t\int_{-2}^2\int_{\T}\big(K_s[\eta(s,\cdot)]\big)(y,z)\big(h_s[\partial_{y_1}\eta(s,\cdot)]\big)(y,z)\:dz_1\:dz_2\:ds- \frac{1}{t^{\alpha}}h_0(y),
\end{align}
for functions $\eta:[0,T)\times \T\times(-2,2)\rightarrow \R$ describing the evolution of the level sets in superlinear order with respect to $t>0$ small. The constants $\pm 2$ for the domain of $\eta$ are coming from the rarefaction speed of Burgers' equation. Moreover, here $h_0(y)$ is one of the mentioned leading order terms, in fact the first order term, depending on the initial graph $\gamma_0$, and $\alpha\in(0,1)$. Moreover, for each $s>0$, $y\in\T\times(-2,2)$ and $\xi:\T\times(-2,2)\rightarrow \R$ fixed, the function $z\mapsto\big(K_s[\xi]\big)(y,z)$ is an order $-1$ convolution kernel induced by the Biot-Savart law. The dependence on $\xi$ involves both $\xi(z)$ and $\xi(y)$ in the form of the difference $\xi(y)-\xi(z)$. Similar the function $(y,z)\mapsto h_s[\partial_{y_1}\xi](y,z)$, again considered for a fixed $s$ and $\xi$, depends on the difference $\partial_{y_1}\xi(y)-\partial_{y_1}\xi(z)$. Thus after integration in $z$ the regularity of the right-hand side of \eqref{eq:fixed_point_equation_inintroduction} with respect to $y$ is the regularity of $\partial_{y_1}\eta$, i.e. the right-hand side when seen as an operator looses one derivative in $y_1$.

In addition, as one of the main difficulties, also for potential equivalent reformulations of \eqref{eq:fixed_point_equation_inintroduction} where the above loss of a derivative might be avoided, we like to point out that the kernels $K_s[\xi]$ degenerate as $s\rightarrow 0$ to a one-dimensional kernel with singularity $\sim 1/(y_1-z_1)$, i.e. to an order $0$ integral kernel. Thus estimates for $K_s[\xi](y,\cdot)$ as an order $-1$ kernel, and thus after integration compensating the loss $\partial_{y_1}\xi(z)$ (but not $\partial_{y_1}\xi(y)$ though), cannot be obtained uniformly in $s$.

Anyhow, in order to keep the paper enjoyable, we deal with \eqref{eq:fixed_point_equation_inintroduction} and its loss of derivative by considering real analytic initial interfaces. This allows us to use an adaptation of the Nirenberg-Nishida abstract Cauchy-Kovalevskaya Theorem. Still, the application of it, even when we continue to ignore the so far not mentioned factor $t^{-(1+\alpha)}$ on the right-hand side, takes quite a lot of effort.  It is the second main part of our proof and can be found in Section \ref{sec:solving}.

Finally, Section \ref{sec:justification} puts everything together to give a solution to the macroscopic IPM equation. 
In the appendices \ref{sec:CK_proof},\ref{sec:otto_relaxation_appendix},\ref{sec:rigorous_dissipation} we give a proof of a version of the abstract Cauchy-Kovalevskaya Theorem needed for our situation and we give some more details regarding the derivation of the macroscopic IPM equation.

\subsection{On the entropy condition}
We emphasize that the solution we find is an entropy solution of \eqref{eq:subequation_inintroduction}, or rather \eqref{eq:nonlocal_hyperbolic_law}. The notion of entropy solution is stated in Definition \ref{def:entropy_solution}. This is consistent with the flat case $\gamma_0=0$ where, as said earlier, the relaxed minimizing movements scheme of Otto \cite{otto1999} converges to the entropy solution of \eqref{eq:nonlocal_hyperbolic_law} which in that case reduces to Burgers' equation. For an extended discussion concerning the selection of the entropy solution by the minimizing movements scheme (including other gradient flows as counterexamples where a corresponding selection fails) we refer to \cite{GigliOtto} where Gigli and Otto have revisited the IPM relaxation in the flat setting of Otto's original work \cite{otto1999}. In addition see also \cite{otto2001} for a stability result in the flat case. Concerning the general, non-flat case it was also conjectured by Otto \cite{otto_conjecture} that the convergence of the minimizing movements scheme to an entropy solution remains true.

\begin{figure}
\begin{framed}
\noindent\textbf{(non-strict) IPM subsolutions}\\
Definition \ref{def:subsolutions} (non-strict: with ``$\leq$'' instead of ``$<$'' in \eqref{eq:pointwise_constraint_relaxed})\\
existence: \cite{ACF,Castro_Cordoba_Faraco,Foerster-Sz,Noisette-Sz} and boxes below\\
uniqueness: no
\begin{framed}
    \noindent\textbf{macroscopic IPM solutions}\\
    Definition \ref{def:solution_ipmsub}\\
    existence: boxes below\\
    uniqueness: no
\begin{framed}
    \noindent\textbf{IPM solutions}\\
    Definition \ref{def:solution_ipm}\\
    existence: \cite{Siegel_ill} and induced by subsolutions via \cite{Sz-ipm}\\
    uniqueness: no
\end{framed}
\begin{framed}
    \noindent\textbf{macroscopic IPM entropy solutions}\\
    Definition \ref{def:entropy_solution}\\
    existence: Theorem \ref{thm:main}\\
    uniqueness: open
\end{framed}
\end{framed}
\end{framed}
\caption{Relation of (sub)solutions in the unstable non-flat interface case: Note that each IPM solution is indeed also a macroscopic IPM solution due to the fact that IPM solutions satisfy $\rho(t,x)^2=1$ for almost every $(t,x)$. Concerning the strictness of the stated inclusions, the listed references \cite{ACF,Castro_Cordoba_Faraco,Foerster-Sz,Noisette-Sz} provide subsolutions different from macroscopic IPM solutions. We believe that also the other two inclusions are strict, e.g. by methods similar to the ones used in the present paper it should be possible to construct a non-flat two-shock solution to macroscopic IPM that is neither an entropy solution nor an IPM solution.}
\label{fig:table_of_solutions}
\end{figure}

Moreover, we point out that some sort of choice among solutions of \eqref{eq:subequation_inintroduction} is critical in order to have a selection criterion. Indeed, already in the flat case solutions are clearly not unique, and also in the general case nonentropic solutions for  \eqref{eq:subequation_inintroduction} can be obtained in an easier way, for instance via \eqref{eq:muskat_equation} below, cf. Remark \ref{rem:remark_after_mainthm}. We believe that the requirement of being an entropy solution leads to uniqueness for the intial value problem \eqref{eq:initial_data}, \eqref{eq:subequation_inintroduction}, but  since the velocity $v$ depends on $\rho$ in a comparably singular nonlocal way, standard methods do not seem to work and uniqueness of entropy solutions to macroscopic IPM stands as an interesting open question. In any case, we emphasize that for the scheme we present there is a unique solution, and therefore our maximal dissipating subsolution is amenable to numerical calculations.

\subsection{Further questions}

Besides the question of uniqueness of the found entropy solution, our work opens the door to many other questions with various levels of difficulty, such as improving the regularity of the solutions, considering initial interfaces (not being analytical or not being a graph, as for example in \cite{Castro_Faraco_Mengual_2}) or other densities as initial data as well. It would be interesting to see whether the JKO scheme does converge rigorously or what happens in the case of different mobilities \cite{Mengual,otto1999}. On a more general level, there might be other selection criteria for IPM, for example based on surface tension \cite{surface_tension} or on vanishing diffusion \cite{menon_otto_2005,MenonOtto}. Finally, we emphasize that our selection criterion ultimately is tailored to the gradient flow structure of IPM, and for other equations the reasoning necessarily must be different. In any case, we hope our work encourages the research on finding a deterministic coarse grained evolution in presence of instabilities.

\section{Ill-posedness and relaxation}\label{sec:relaxation}

 The here considered unstable interface initial value problem is highly ill-posed. In this section we explain in which sense this ill-posedness holds, as well as a strategy based on convex integration and the relaxation of Otto \cite{otto1999} to overcome it.
 This section, having the purpose to fully motivate equation \eqref{eq:subequation_inintroduction}, is mostly a review of existing results. 
 Except for the derivation in Section \ref{sec:transfer_to_subsolutions} showing that maximal potential energy dissipating subsolutions coincide with Otto's relaxation, we do not claim any novelty. However, we are not aware that the computations in Section \ref{sec:otto} can be found in the literature.
 A reader only interested in solving system \eqref{eq:subequation_inintroduction} can go directly to Section \ref{sec:main_result}.

\subsection{The Muskat problem}\label{sectionmuskat}

If one assumes that 
\begin{align*}
    \rho(x,t)=\left\{\begin{array}{cc} \rho_{\text{up}} &  x_2>f(x_1,t)\\
    \rho_{\text{down}} & x_2<f(x_1,t)\end{array}\right.,
\end{align*}
 a closed equation from \eqref{eq:ipm} can be obtained for the interface $(x_1, f(x_1,t))$. Indeed,
\begin{align}\label{eq:muskat_equation}
    \pa_t f(x,t)=\frac{\rho_{\text{down}}-\rho_{\text{up}}}{4\pi}\int_{\mathbb{T}}
    \frac{\sin(y)(f_x(x,t)-f_x(x-y,t))}{\cosh(f(x,t)-f(x-y,t))-\cos(y)}dy.
\end{align}
This equation is usually known in the literature as the Muskat equation honoring M. Muskat \cite{Muskat}. 

In the  case $\rho_{\text{down}}>\rho_{\text{up}}$ the problem is stable and local existence and regularity of solutions can be proven in different functional settings and situations \cite{Cordoba_Gancedo,Cordoba_Cordoba_Gancedo1,Cordoba_Cordoba_Gancedo2,GS,Cordoba_Granero_Orive,fanghua,alazardlazar,nguyenpausader,AlazardNguyenpp1,AlazardNguyenpp2,AlazardNguyenpp3,Alazard_Nguyen4,cameron,EM,matioc,Abels_Matioc,Chen_Nguyen_Xu,Shi,muskat-self-similar,garciajuarez2023desingularization,Choi_Jerison_Kim,Agrawal_Patel}, as well as global for small and medium size initial data \cite{CCGPS,CCGS,CGSV,gancedolazar,CordobaLazar20,Granero_Lazar,Alonso-Oran_Granero,dong2021global}. The existence of singularities for large initial data is shown in \cite{CCFGM,CCFG} and also in \cite{Cordoba_Serrano_Zlatos1,Cordoba_Serrano_Zlatos2}.

However, if $\rho_{\text{down}}<\rho_{\text{up}}$ the Muskat equation is ill-posed \cite{Siegel_ill,Cordoba_Gancedo}. Surprisingly, convex integration has allowed us to construct solutions to IPM starting in this kind of unstable situations. They have been called mixing solutions and, in them, the initial interface between the two different densities disappears and a strip arises in which the two densities mix. We elaborate on these mixing solutions in the next sections.
For a general picture on convex integration in the context of fluid dynamics we refer to the surveys \cite{Buckmaster-Vicol-Survey,DeL-Sz-Survey2,DeL-Sz-Survey}.

\subsection{IPM as differential inclusion}\label{sec:ipm_as_differential_inclusion}

The first examples of nonuniqueness of weak solutions for \eqref{eq:ipm} using convex integration were given in \cite{Cordoba_Faraco_Gancedo} by C\'ordoba, Gancedo and the second author for the initial value $\rho_0=0$. Their method bypasses the computation of the relaxation by means of so-called $T_4$ configurations. After this Sz\'ekelyhidi established in \cite{Sz-ipm} the explicit relaxation of \eqref{eq:ipm} for initial data of two-phase type, enabling a systematic investigation of interface problems in IPM. 
While the results in \cite{Cordoba_Faraco_Gancedo,Sz-ipm} established ill-posedness of IPM in the class of essentially bounded solutions, Isett and Vicol could also show the existence of compactly supported $\cC_{t,x}^\alpha$-solutions for $\alpha<1/9$, \cite{Isett_Vicol}. The starting point of our investigation is the relaxation of \cite{Sz-ipm}, which we will describe in this subsection.

In the following we consider initial data with $\abs{\rho_0}=1$ almost everywhere. The corresponding notion of weak solutions is fixed in Definition \ref{def:solution_ipm} below. Note that for such initial data the last condition in the definition is an additional consistency requirement coming from the continuity, or rather transport, equation in \eqref{eq:ipm}.
\begin{definition}\label{def:solution_ipm}
A pair $\rho\in L^\infty((0,T)\times\T\times\R)$, $v\in L^\infty(0,T;L^2(\T\times\R;\R^2))$ is a solution of \eqref{eq:ipm}, \eqref{eq:initial_data} provided for any $\varphi\in\cC_c^\infty([0,T)\times\T\times\R)$ there holds
\begin{align*}
    \int_0^T\int_{\T\times\R}\rho\partial_t\varphi+\rho v\cdot\nabla \varphi\:dx\:dt+\int_{\T\times\R}\rho_0\varphi(0,\cdot)\:dx=0,\\
    \int_0^T\int_{\T\times\R}v\cdot \nabla\varphi\:dx\:dt=0,\\
    \int_0^T\int_{\T\times\R}(v+\rho e_2)\cdot\nabla^\perp \varphi\:dx\:dt=0,
\end{align*}
and $\abs{\rho(t,x)}=1$ for almost every $(t,x)\in (0,T)\times\T\times\R$.
\end{definition}

A key step in the works \cite{Cordoba_Faraco_Gancedo,Sz-ipm} is to recast weak solutions as defined above as solutions to a differential inclusion, to be able to use the Murat-Tartar compensated compactness formalism \cite{Tartar1979}.

A pair $(\rho,v)$ is a weak solution if and only if the triple $(\rho,v,m)\in L^\infty((0,T)\times\T\times\R)\times (L^\infty(0,T;L^2(\T\times\R)))^2$ satisfies the linear system 
\begin{align}\begin{split}\label{eq:linear_system}
    \partial_t\rho+\divv m&=0,\\
    \divv v&=0,\\
    v&=-\nabla p-\rho e_2,\\
    \rho(0,\cdot)&=\rho_0
    \end{split}
\end{align}
distributionally, i.e. in analogy to Definition \ref{def:solution_ipm}, together with 
\begin{align}\label{eq:pointwise_constraint}
    \big(\rho(t,x),v(t,x),m(t,x)\big)\in K:=\set{(\rho,v,m)\in\R^5:\abs{\rho}=1,~m=\rho v}
\end{align}
for almost every $(t,x)\in(0,T)\times\T\times\R$. 

Then the relaxation of the incompressible porous media equation is understood as the relaxation of the corresponding differential inclusion, i.e. in the pointwise nonlinear constraint \eqref{eq:pointwise_constraint} the set $K$ is replaced by its convex (or more generally $\Lambda$-convex) hull. Up to technicalities one can recover highly oscillatory solutions from this set, as the main theorem of Sz\'ekelyhidi in \cite{Sz-ipm} shows.

\begin{theorem}[\cite{Sz-ipm}]\label{thm:CI} Let $\bar \rho\in L^\infty((0,T)\times\T\times\R)$ and $\bar v,\bar m\in L^\infty(0,T;L^2(\T\times\R))$ satisfy \eqref{eq:linear_system} in the sense of distributions. Suppose that there exists a bounded and open set $\mathscr{U}\subset (0,T)\times\T\times\R$ such that there holds \eqref{eq:pointwise_constraint} for almost every $(t,x)\notin\mathscr{U}$, while $(\bar{\rho},\bar{v},\bar{m})$ are continuous on $\mathscr{U}$ with 
\begin{align}\begin{split}\label{eq:pointwise_constraint_relaxed}
    \big(\bar\rho&(t,x),\bar v(t,x),\bar m(t,x)\big)\\
    &\in \set{(\rho,v,m)\in\R^5:\abs{\rho}<1,~\abs{2(m-\rho v)+(1-\rho^2)e_2}<(1-\rho^2)}\end{split}
\end{align}
for every $(t,x)\in\mathscr{U}$. Then there exist infinitely many weak solutions $(\rho,v)$ of \eqref{eq:ipm}, \eqref{eq:initial_data} that coincide with $(\bar{\rho},\bar{v})$ outside of $\mathscr{U}$ and are arbitrarily close to $(\bar{\rho},\bar{v})$ in the weak $L^2(\mathscr{U})$-topology.
\end{theorem}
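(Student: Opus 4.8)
\emph{Proof idea.} This is the IPM instance of the De Lellis--Sz\'ekelyhidi convex integration scheme, so the plan is to cast \eqref{eq:ipm}, \eqref{eq:initial_data} as a differential inclusion for the state variable $z=(\rho,v,m)\in\R^5$, governed by the linear, constant coefficient, homogeneous first order system \eqref{eq:linear_system} together with the pointwise constraint $z\in K$ of \eqref{eq:pointwise_constraint}, and then to perform convex integration along the wave cone of \eqref{eq:linear_system}. First I would record that cone, i.e.\ the set $\Lambda$ of amplitudes $\bar z=(\bar\rho,\bar v,\bar m)$ admitting a nonconstant plane wave solution $z(t,x)=\bar z\,\psi(\lambda(\xi\cdot x+\tau t))$ of \eqref{eq:linear_system} for some space--time frequency $(\xi,\tau)\neq0$: unravelling \eqref{eq:linear_system} this means $\tau\bar\rho+\xi\cdot\bar m=0$, $\xi\cdot\bar v=0$ and $\xi^{\perp}\cdot(\bar v+\bar\rho e_2)=0$. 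The relaxation computation of \cite{Sz-ipm} then shows that the $\Lambda$-convex hull $K^{\Lambda}$ of $K$ has interior exactly the open set appearing in \eqref{eq:pointwise_constraint_relaxed}, that $\overline{K^{\Lambda}}$ is the closed convex set obtained by replacing the strict inequalities by non-strict ones, and --- this is the geometric input I would extract and reuse --- that through every $z_0\in\mathrm{int}\,K^{\Lambda}$ there pass $\Lambda$-segments $[z_0-\bar z,z_0+\bar z]\subset K^{\Lambda}$ whose $\rho$-width $\abs{\bar\rho}$ is bounded below by a positive constant times the defect $1-\rho_0^{2}$.

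The analytic core is the localized plane wave construction. Using a constant coefficient potential operator for \eqref{eq:linear_system} --- a differential operator $\cL$ of second order such that $z=\cL\varphi$ solves \eqref{eq:linear_system} for every potential $\varphi$ and whose symbol along each admissible $(\xi,\tau)$ recovers the corresponding line in $\Lambda$, the analogue of the potential used for Euler subsolutions --- one produces, for each $\bar z\in\Lambda$, each cube $Q\subset(0,T)\times\T\times\R$, each $\varepsilon>0$ and each sufficiently large frequency $\lambda$, a smooth map $w=w_{\bar z,Q,\lambda,\varepsilon}$ supported in $Q$, solving \eqref{eq:linear_system} identically, with $\norm{w}_{L^{\infty}}\lesssim\abs{\bar z}$, oscillating at frequency $\lambda$, agreeing with the genuine plane wave $\bar z\,\psi(\lambda(\xi\cdot x+\tau t))$ outside an $\varepsilon$-collar of $\partial Q$, hence with $\int_{Q}\abs{w_{\rho}}^{2}\geq c\,\abs{\bar z}^{2}\abs{Q}-\varepsilon$ while $w\rightharpoonup0$. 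Covering $\mathscr{U}$ up to small measure by such cubes on each of which the subsolution $(\bar\rho,\bar v,\bar m)$ is almost constant --- this is where the assumed continuity of $(\bar\rho,\bar v,\bar m)$ on $\mathscr{U}$ enters, so that the shifted segment stays inside $\mathrm{int}\,K^{\Lambda}$ --- and summing finitely many building blocks gives the key \textbf{Perturbation Lemma}: for every $z=(\rho,v,m)$ in the set
\begin{gather*}
X_{0}:=\big\{\,(\rho,v,m)\ \text{solving}\ \eqref{eq:linear_system},\ (\rho,v,m)\in K\ \text{a.e.\ on}\ \mathscr{U}^{c},\\
\text{continuous on}\ \mathscr{U}\ \text{with}\ (\rho,v,m)(t,x)\in\mathrm{int}\,K^{\Lambda}\ \text{for all}\ (t,x)\in\mathscr{U}\,\big\}
\end{gather*}
and every $\sigma>0$ there is $z'\in X_{0}$ with $\norm{z'-z}<\sigma$ in the (metrizable) weak $L^{2}(\mathscr{U})$-topology and $\int_{\mathscr{U}}(\rho')^{2}\geq\int_{\mathscr{U}}\rho^{2}+\beta$, where $\beta>0$ depends only on a positive lower bound for the defect $\abs{\mathscr{U}}-\int_{\mathscr{U}}\rho^{2}$.

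With the Perturbation Lemma in hand I would run the Baire category argument on $X:=\overline{X_{0}}$, the weak-$L^{2}(\mathscr{U})$ closure, a nonempty complete metric space (nonempty since the hypothesis places $(\bar\rho,\bar v,\bar m)$ in $X_{0}$) on which \eqref{eq:linear_system} and $z\in K$ a.e.\ on $\mathscr{U}^{c}$ persist and on which $z(t,x)\in\overline{K^{\Lambda}}$ a.e.\ on $\mathscr{U}$ by convexity. The functional $I(z):=\int_{\mathscr{U}}\rho^{2}$ is bounded by $\abs{\mathscr{U}}$ and weakly lower semicontinuous, hence of Baire class one, so its set of points of continuity is residual in $X$. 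If $z\in X$ is such a point and $I(z)<\abs{\mathscr{U}}$, pick $z_{j}\in X_{0}$ with $z_{j}\rightharpoonup z$; for $j$ large $\abs{\mathscr{U}}-I(z_{j})\geq\delta:=(\abs{\mathscr{U}}-I(z))/2$, so the Perturbation Lemma yields $z_{j}'\in X_{0}$ within $1/j$ of $z_{j}$, hence with $z_{j}'\rightharpoonup z$, and $I(z_{j}')\geq I(z_{j})+\beta(\delta)$, contradicting continuity of $I$ at $z$. Therefore $I(z)=\abs{\mathscr{U}}$, i.e.\ $\abs{\rho}=1$ a.e.\ on $\mathscr{U}$, and since $z\in\overline{K^{\Lambda}}$ forces $\abs{2(m-\rho v)+(1-\rho^{2})e_2}\le1-\rho^{2}=0$ there, also $m=\rho v$; thus $z\in K$ a.e.\ on all of $(0,T)\times\T\times\R$, i.e.\ $(\rho,v)$ is a weak solution of \eqref{eq:ipm}, \eqref{eq:initial_data} coinciding with $(\bar\rho,\bar v)$ off $\mathscr{U}$. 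As the residual set is dense and, $X$ having no isolated points, uncountable, there are infinitely many such solutions and they approximate $(\bar\rho,\bar v,\bar m)\in X_{0}$ arbitrarily well in weak $L^{2}(\mathscr{U})$.

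I expect the main obstacle to be the Perturbation Lemma, and within it the localized plane wave construction: one must solve \eqref{eq:linear_system} \emph{exactly} for the perturbation (so the truncation to $Q$ has to be absorbed through the potential $\cL$, producing error terms that must be shown small), keep the perturbation supported in $\mathscr{U}$ and weakly vanishing, and --- crucially --- extract a \emph{quantitative} $L^{2}$-energy gain bounded below in terms of the current defect, which forces one to track how the room inside $K^{\Lambda}$, i.e.\ the admissible segment lengths, degenerates as $\abs{\rho}\to1$; this last point is precisely where the relaxation geometry of \cite{Sz-ipm} is indispensable.
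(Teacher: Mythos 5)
Your outline is correct and follows essentially the same route as the original proof in \cite{Sz-ipm}, which the paper cites rather than reproves: wave cone computation for \eqref{eq:linear_system}, identification of the interior of the $\Lambda$-convex hull with the set in \eqref{eq:pointwise_constraint_relaxed}, localized plane waves built from a potential, a quantitative perturbation lemma, and the Baire category argument on the weak closure of the set of strict subsolutions with the functional $\int_{\mathscr{U}}\rho^2$. Nothing essential is missing.
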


In the case of the IPM system the set on the right-hand side of \eqref{eq:pointwise_constraint_relaxed} is indeed only the interior of the $\Lambda$-convex hull of $K$, see \cite{Sz-ipm} for a precise definition, which does not coincide with the full convex hull as opposed to the Euler equations. Still, \eqref{eq:pointwise_constraint_relaxed} describes all possible weak limits of solutions to the IPM system, cf. \cite{Sz-ipm}. In view of that one can therefore truly speak about the full relaxation of IPM in the context of two-phase mixtures.

This fact has been quantified in \cite{Castro_Faraco_Mengual}, where the relation 
between solutions and subsolutions has been made precise through an adapted $h$-principle. In particular, this leads to additional properties of the solutions like a degraded macroscopic behaviour or the turbulent mixing at every time-slice property. The latter means that the solutions $(\rho,v)$ induced by $(\bar{\rho},\bar{v},\bar{m})$ satisfy $\rho\in\cC^0([0,T);L^2_{weak}(\T\times(-R,R)))$, where $R$ is some positive number with $\mathscr{U}\subset(0,T)\times\T\times(-R,R)$, and 
\begin{equation}\label{eq:turbulent_mixing_at_every_time_slice}
    \left(\int_{B}(1-\rho(t,x))\:dx\right) \left(\int_{B}(1+\rho(t,x))\:dx\right)>0
\end{equation}
for any $t\in(0,T)$ and any ball $B$ fully contained in $\mathscr{U}_t:=\set{x\in\T\times\R:(t,x)\in\mathscr{U}}$.

For later purpose we also point out the following possible upgrade of Theorem \ref{thm:CI} which is obtained by using convex integration as in \cite{Castro_Faraco_Mengual,DeL-Sz-Adm}.
\begin{lemma}\label{lem:CI_regarding_potential_energy}
Let $(\bar{\rho},\bar{v},\bar{m})$ be as in Theorem \ref{thm:CI} and $\delta:[0,T)\rightarrow \R$ continuous with $\delta(0)=0$, $\delta(t)>0$, $t>0$. Then there exist infinitely many solutions $(\rho,v)$ as in Theorem \ref{thm:CI} with the additional property that 
\begin{align*}
    \abs{\int_{\T\times\R}(\bar{\rho}(t,x)-\rho(t,x))x_2\:dx}\leq \delta(t)
\end{align*}
for almost every $t\in[0,T)$.
\end{lemma}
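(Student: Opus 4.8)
The plan is to revisit the convex integration scheme behind Theorem \ref{thm:CI} and to track the potential energy functional $t\mapsto\int_{\T\times\R}\rho(t,x)x_2\,dx$ as an additional observable along the iteration, forcing it to stay within the prescribed tolerance $\delta(t)$ of the subsolution's value. Recall that the solutions produced in Theorem \ref{thm:CI} arise as the limit $\rho=\bar\rho+\lim_k\vartheta_k$ of a sequence of perturbations added inside $\mathscr U$, where each $\vartheta_k$ is supported in $\mathscr U$, is built from highly oscillatory building blocks (localised plane waves solving the linear system \eqref{eq:linear_system} with the correct null-Lagrangian structure), and satisfies $\vartheta_k\rightharpoonup 0$ weakly in $L^2$. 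Since $x_2$ is a fixed $L^2_{\mathrm{loc}}$ weight and $\mathscr U$ is bounded, weak convergence $\vartheta_k\rightharpoonup 0$ already gives $\int(\bar\rho-\rho)x_2\,dx=-\lim_k\int\vartheta_k x_2\,dx$; the point is that this limit need not vanish because the convergence is only weak, so one must instead control the finite-stage quantities $\int_{\mathscr U_t}\vartheta_k(t,x)x_2\,dx$ uniformly in $k$ and $t$.

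The key steps, in order: First, I would recall (from \cite{Castro_Faraco_Mengual,DeL-Sz-Adm}) that at stage $k$ the perturbation $\vartheta_k$ is a finite sum of building blocks each oscillating at frequency $\lambda_k\to\infty$ with amplitude controlled by the gap between $(\bar\rho_k,\bar v_k,\bar m_k)$ and the boundary of the $\Lambda$-convex hull; the mean-zero, high-frequency nature of each block means that testing it against the \emph{smooth} function $x_2$ (or rather a smooth truncation of $x_2$ that equals $x_2$ on a neighbourhood of $\overline{\mathscr U}$) produces a contribution of size $O(\lambda_k^{-1})$ by a standard stationary-phase / integration-by-parts estimate, uniformly in $t$. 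Second, I would choose the frequencies $\lambda_k$ in the iteration increasing fast enough — in addition to whatever was needed for the $L^2(\mathscr U)$-closeness in Theorem \ref{thm:CI} — that the telescoping sum $\sum_k \|\,t\mapsto\int_{\mathscr U_t}\vartheta_k(t,\cdot)x_2\,dx\,\|_{?}$ is dominated by any prescribed modulus. Here the subtlety is that $\delta(t)\to 0$ as $t\to 0$, i.e. the tolerance degenerates near the initial time where $\mathscr U_t$ shrinks to the interface; but that is exactly the regime where the available amplitude (hence the admissible perturbation) is small as well, since $(\bar\rho,\bar v,\bar m)$ must match the two-phase constraint \eqref{eq:pointwise_constraint} on $\partial\mathscr U$. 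So I would localise the construction in time, doing the energy bookkeeping on a family of time-slabs $\{t\ge \tau_j\}$ with $\tau_j\downarrow 0$, on each of which $\delta$ is bounded below, and patch the frequency choices across slabs. Third, passing to the limit, $\rho=\bar\rho+\sum_k\vartheta_k$ (the sum converging in $C^0_t L^2_x(\mathscr U)$-weak, as in Theorem \ref{thm:CI}) then satisfies $|\int(\bar\rho-\rho)(t,\cdot)x_2\,dx|\le\sum_k|\int\vartheta_k(t,\cdot)x_2\,dx|\le\delta(t)$ for a.e.\ $t$, which is the claim. Infinitely many such solutions are obtained exactly as in Theorem \ref{thm:CI} by the usual Baire-category / parameter-family argument, since imposing the extra energy bound only restricts the iteration by a quantitative (and achievable) amount.

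The main obstacle I anticipate is the uniformity in $t$ together with the degeneration of $\delta$ near $t=0$: one needs the per-stage energy defect to be controlled by a function that vanishes as $t\to0$ at least as fast as $\delta$, and this must be compatible with the existing convergence requirements of the scheme. This is a bookkeeping-heavy but essentially routine adaptation — the oscillatory cancellation that makes $\int\vartheta_k x_2\,dx$ small is the same mechanism that makes $\vartheta_k\rightharpoonup 0$, so the ingredient is available; what takes care is organising the time-localisation and the choice of $\lambda_k$ so that the constant in "$O(\lambda_k^{-1})$'' does not blow up as the time-slab approaches $t=0$, which one handles by exploiting that $\|\vartheta_k\|_{L^\infty}$ and $\mathrm{diam}(\mathscr U_t)$ both go to $0$ there. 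I expect no genuinely new idea is needed beyond what is already in \cite{Castro_Faraco_Mengual,DeL-Sz-Adm}, which is why the statement is phrased as a lemma obtained "by using convex integration as in'' those references.
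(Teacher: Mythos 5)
Your proposal is correct and is essentially the argument the paper has in mind: the paper offers no proof of Lemma \ref{lem:CI_regarding_potential_energy} beyond deferring to the convex integration schemes of \cite{Castro_Faraco_Mengual,DeL-Sz-Adm}, and your adaptation — testing each high-frequency, potential-form building block against a smooth truncation of $x_2$ to get an $O(\lambda_k^{-1})$ contribution uniformly in $t$, then grading the frequencies on time-slabs $\tau_j\downarrow 0$ so the telescoped total stays below $\delta(t)$ despite $\delta(0)=0$ — is exactly the intended upgrade. Only minor caveats: the early sentence conflating $\rho=\bar\rho+\lim_k\vartheta_k$ with $\vartheta_k\rightharpoonup 0$ should be cleaned up (the $\vartheta_k$ are the stage increments and $\rho-\bar\rho=\sum_k\vartheta_k$), and the heuristic that the admissible amplitude shrinks automatically near $t=0$ is not guaranteed for a general $\mathscr{U}$ as in Theorem \ref{thm:CI} — but your time-slab bookkeeping does not rely on it, so the argument stands.
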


\begin{definition}\label{def:subsolutions}
Any triple $(\bar{\rho},\bar{v},\bar{m})$ satisfying the conditions of Theorem \ref{thm:CI} is called a subsolution of \eqref{eq:ipm}, \eqref{eq:initial_data}. The set $\mathscr{U}$, in other papers frequently also denoted by $\Omega_{\text{mix}}$, is called the mixing zone of the subsolution.
\end{definition}

Theorem \ref{thm:CI} shifts the focus from a single solution to the investigation of subsolutions which are understood as possible coarse grained or averaged solutions. As subsolutions play the central role also in the present investigation we will frequently omit the bars in notation and instead mark solutions by $(\rho_{sol},v_{sol})$ in case there is a chance of confusion.

\subsection{Examples of subsolutions}\label{sec:examples_of_subsolutions}
The first examples of non constant subsolutions have been given in the same paper \cite{Sz-ipm} of Sz\'ekelyhidi for the perfectly flat initial interface, $\rho_0(x)=\sign(x_2)$. Keeping the one-dimensional structure of the initial data one sees that $v=0$, $m=-\alpha(1-\rho^2)e_2$, $\alpha\in(0,1)$ reduces \eqref{eq:linear_system}, \eqref{eq:pointwise_constraint} to the one-dimensional conservation law
\begin{align*}
    \partial_t\rho +\alpha\partial_{x_2}(\rho^2)=0,
\end{align*}
which has a unique entropy solution given by
\begin{align*}
    \rho(t,x)=\begin{cases}
    1,&x_2>2\alpha t,\\
    \frac{x_2}{2\alpha t},&-2\alpha t<x_2<2\alpha t,\\
    -1,&x_2<-2\alpha t.
    \end{cases}
\end{align*}
It also has been mentioned in \cite{Sz-ipm} that the limiting case $\alpha=1$ is in agreement with the relaxation of Otto \cite{otto1999}. It coincides with \eqref{eq:subequation_inintroduction} in the flat situation, cf. Section \ref{sec:flat_interface}. In addition this case gives an upper bound for the the mixing zone. More precisely, it has been shown in \cite{Sz-ipm} that the mixing zone at time $t>0$, $\mathscr{U}_t$ of any one-dimensional subsolution emanating from $\rho_0(x)=\sign(x_2)$ is contained in the strip $[-1,1]\times(-2t,2t)$. A similar subsolution in the harder case of different viscosities was studied in \cite{Mengual}. Actually in \cite{Mengual} the $\Lambda$-hull of IPM with different viscosities and densities is computed. 

In the context of IPM and differential inclusions we also like to mention the article \cite{Hitruhin-Lindberg} which addresses the stationary, i.e. time-independent, IPM system. In \cite{Hitruhin-Lindberg} the lamination convex hull of that system is computed, as well as a rigidity result for its subsolutions and an application for long-term limits of \eqref{eq:ipm} given.

The first examples of subsolutions giving rise to mixing solutions, i.e. solutions obtained from the subsolution via convex integration with property \eqref{eq:turbulent_mixing_at_every_time_slice}, for IPM starting in a non flat interface $(x_1, f_0(x_1))$ have been provided in \cite{Castro_Cordoba_Faraco}. In this paper the density $\rho$ of the subsolution is Lipschitz and the prescribed speed of opening of the mixing zone $c(x_1)$ ($=2\alpha$ in the flat case above), satisfies $1\leq c <2$ and, as indicated, might depend on $x_1$.  The result of \cite{Castro_Cordoba_Faraco} holds for initial data $f_0\in H^5(\mathbb{R})$, i.e. in a regime where the Muskat problem can not be solved. A numerical analysis of these subsolutions can be found in \cite{C} where the formation of fingers can be observed.  In \cite{ACF}, 
the semiclassical viewpoint developed in \cite{Castro_Cordoba_Faraco}
is taken one step further (using semiclassical Sobolev spaces for example), providing an alternative proof
to the main result of \cite{Castro_Cordoba_Faraco}. Indeed this later approach improves the subsolutions with respect to their regularity, as the boundary of the mixing zone is in $H^{5-\frac{1}{c(x_1)}}$, where $c(x_1)$ is the local speed of opening of the mixing zone, instead of merely in $H^4$

In \cite{Foerster-Sz} the authors constructed mixing solutions with an initial interface $f_0\in \cC^{3+\alpha}$ relaxing  the initial regularity needed in   \cite{Castro_Cordoba_Faraco} but relying on subsolutions with piecewise-constant density instead of Lipschitz. In this case the speed of opening of the mixing zone is $0<c<2$ with $c$ uniform in $x_1$. Thereafter the same kind of subsolutions have been constructed in \cite{Noisette-Sz} with variable speed of opening. 

As mentioned before, mixing solutions obtained via convex integration are not unique. There are two reasons for this fact: a) different subsolutions can be found, b) infinitely many solutions, corresponding to different distributions of the density, emanate from every fixed subsolution. In order to deal with the point b), in \cite{Castro_Faraco_Mengual} it has been shown that all the solutions obtained from a fixed subsolution can be chosen in such a way that they share averages over large sets, i.e., they are the same as the subsolution at a macroscopic level. One of the main points of the present paper is to deal with point a). A particular instance of this multiplicity will be illustrated in Subsection \ref{sec:subsolution_selection_problem} below.

The constructions of the subsolutions above seem to rely on the Saffman-Taylor instability
(heavy fluid on top of a lighter fluid). In \cite{Castro_Cordoba_Faraco} it was observed
that there also exist mixing solutions in the stable regime (see also \cite{Foerster-Sz})  which build on Kelvin-Helmholtz type instabilites (discontinuity of the velocity field, see \cite{Mengual} for a thorough discussion of this phenomena at the level of the hulls). Actually, the analysis in \cite{Castro_Cordoba_Faraco} indicates that the  mixing can be created around  any point of the interface which is not both flat (with zero slope) and stable. We call points having zero slope in the stable regime fully stable points. 
It happens that in an initially overhanging interface there must be always a fully stable point. Partially unstable situations therefore require to find compatibility between the Muskat solution and mixing solutions, see \cite{Castro_Faraco_Mengual_2}. 
Remarkably, the construction in  \cite{Castro_Faraco_Mengual_2}  allows to answer the question on how to prolongate in time the singular solutions to the Muskat problem  found in \cite{CCFGM,CCFG}, namely as mixing solutions. 

As a last remark we like to point out that the subsolutions constructed in \cite{Castro_Cordoba_Faraco,Foerster-Sz,Noisette-Sz,Castro_Faraco_Mengual_2} are local in time in the sense that although the involved functions exist over a potentially larger time interval, a small time interval has to be chosen in order to guarantee that they take values inside the convex hull, i.e. that \eqref{eq:pointwise_constraint_relaxed} holds true. This is in contrast to the flat cases \cite{Sz-ipm,Mengual} and to the subsolution constructed in the present paper. Although also here we will only prove a local in time existence result, the involved functions take values in (the closure) of the convex hull as long as they exist. 

\subsection{The subsolution selection problem}\label{sec:subsolution_selection_problem}
As described, the constructions from the previous subsection contain ansatzes for certain properties of the subsolution and hence for the induced mixing solutions of \eqref{eq:ipm}. To illustrate this freedom in the simplest case let us discuss the flat interface with $\gamma_0(x_1)=0$ in slightly more detail. As in \cite{Sz-ipm}, setting $v\equiv 0$,  $m=m_2(t,x_2)e_2$, $\rho=\rho(t,x_2)$ one sees that $(\rho,0,m)$ is a subsolution if and only if 
\begin{gather*}
    \partial_t\rho+\partial_{x_2}m_2=0,\quad \rho(0,x)=\sign(x_2),
    \\    \abs{\rho}\leq 1, \\
    \abs{2m_2+1-\rho^2}<1-\rho^2, \text{ when }\abs{\rho}<1,\\
    m_2=0, \text{ when }\abs{\rho}=1
\end{gather*}
and the required continuity conditions hold true. Thus one could make the ansatz
\begin{align}\label{eq:ansatz_for_m_in_flat_case}
    m_2=-\frac{1-\rho^2}{2}+\frac{1-\rho^2}{2}\xi_2
\end{align}
with $\xi_2:[0,T)\times\R\rightarrow\R$ satisfying $\abs{\xi_2}<1$ and for any such $\xi_2$ solve the conservation law
\begin{align*}
    \partial_t\rho+\partial_{x_2}\left((\xi_2(t,x_2)-1)\frac{1-\rho^2}{2}\right)=0
\end{align*}
with initial data $\rho_0(x_2)=\sign(x_2)$ to get plenty of subsolutions with different mixing zones and density profiles. Note that in that sense $\xi_2$, or rather the whole relation \eqref{eq:ansatz_for_m_in_flat_case}, plays the role of a constitutive law.

Summarizing once more, these examples show that not only each subsolution induces infinitely many solutions of the incompressible porous media equation sharing a common coarse grained, or averaged, behaviour, but that there are also infinitely many possibilities for this averaged evolution via the vast amount of possible subsolutions. This is a common problem in the construction of turbulent solutions emanating from unstable interface initial data, as for instance also for the Kelvin-Helmholtz instability \cite{GK_energy_euler,Mengual_Sz_vortex_sheet,Sz-KH} and the Rayleigh-Taylor instability in the context of the Euler equations \cite{GKBou,GKHirsch,GKSz}. We emphasize that however our criteria builds on the gradient flow structure of IPM, and therefore different ideas should be used in the case of the Euler equations, see Section \ref{sec:comparison} for a short overview of so far used strategies.

\subsection{A selection criterion}\label{sec:selection}

We now focus in the general, not necessarily flat, case on the selection of subsolutions in terms of choosing an appropriate relation between $m$, $\rho$ and $v$ such that \eqref{eq:pointwise_constraint_relaxed} holds true provided $\abs{\rho}\leq 1$. 

First we will review  the strategy proposed by F. Otto in \cite{otto1999} to relax system \eqref{eq:ipm} based on its gradient flow structure in Lagrangian coordinates and we will formally obtain \eqref{eq:subequation_inintroduction} from this relaxation. The strategy of Otto does not rely on the notion of a subsolution in the context of differential inclusions as in Section \ref{sec:relaxation}. However, the solution of \eqref{eq:subequation_inintroduction} will be a (non-strict) subsolution with $m=\rho v-(1-\rho^2)e_2$.

Thereafter, we will also give an argument to derive \eqref{eq:subequation_inintroduction} in Eulerian coordinates directly based on subsolutions. Also here the starting point will be the gradient flow structure of \eqref{eq:ipm}. This second argument shows that the relaxation of Otto selects among all subsolutions precisely those that maximize the dissipation of potential energy at every time instant.

The relations are summarized in Figure \ref{fig:epic_diagram}.

\subsubsection{Otto's relaxation}\label{sec:otto}

In this section we give a very brief summary of Otto's 5 step strategy leading to the macroscopic IPM equation \eqref{eq:subequation_inintroduction}. The discussion is not rigorous and even then we have put most of the explicit calculations to Appendix \ref{sec:otto_relaxation_appendix}. We adapt our notation to the one of \cite{otto1999}, which, due to a different normalization, studies the evolution of 
$$
s(x,t)=\frac{1-\rho(x,t/2)}{2}
$$
instead of $\rho(x,t)$, i.e. contrary to other sections the density $s$ is now taking values in $[0,1]$.
In these coordinates the IPM system \eqref{eq:ipm} reads
\begin{align}\begin{split}\label{eq:ipm_with_s}
    \pa_t s +u \cdot \nabla s =&0,\\
    \divv u =&0,\\
    u=&-\nabla \Pi +s e_2,\end{split},
\end{align}
see Appendix \ref{sec:otto_relaxation_appendix}.

The starting point (Step 1) of Otto's relaxation is the vital fact that, when formulated in Lagrangian coordinates, IPM can be seen as a gradient flow with respect to the potential energy 
$$
E[\Phi]=-\int s(x,0) \Phi(x)\cdot e_2
$$
on the manifold
\begin{align*}
        M_0=\{ \Phi \text{ one-to-one and onto, smooth, volume preserving maps}\}.
    \end{align*}
More precisely, if $(s,u,\Pi)$ is a solution of \eqref{eq:ipm_with_s} then the flow $\Phi(x,t)$ induced by $u$ satisfies
\begin{align}\label{eq:gradient_flow}
        \int \pa_t\Phi(\cdot,t)\cdot w =- dE[\Phi(\cdot,t)]w,\quad \forall w\in T_{\Phi(\cdot,t)}M_0,
    \end{align}
where $dE[\Phi]w$ is the Fr\'echet derivative of the functional $E$ at the point $\Phi\in M_0$ in the direction 
    \begin{align*}
        w\in T_\Phi M_0=\{ w \text{ smooth and such that } \nabla \cdot \left(w\circ \Phi^{-1}\right)=0\}.
    \end{align*}

Fast-forwarding a bit, the next steps of Otto consist of the introduction of a time discretization with stepsize $h>0$ in form of a minimizing movements scheme (Step 2), the extension of the underlying manifold $M_0$ to its $L^2$-closure in order to turn the potentially ill-posed discrete variational problems emanating from Step 2 to well-posed ones (Step 3), and a translation of the now existing sequence of minimizers back to Eulerian coordinates (Step 4). At this point there exists a sequence of functions $\theta^{(k)}$ corresponding to $s(\cdot,t)$ at time $t=kh$, but of course potentially on a coarse grained or ``locally averaged'' level, which is characterized by the following JKO scheme: $\theta^{(0)}=s(\cdot,0)$, and given $\theta^{(k)}$, $\theta^{(k+1)}$ is the minimizer in $K$ of
   \begin{align}\label{eq:eulerian_JKO_scheme}
   \frac{1}{2}\text{dist}^2(\theta^{(k)},\theta)+\frac{1}{2}\text{dist}^2(1-\theta^{(k)},1-\theta)
   -h\int \theta(x)x_2
   \end{align}
   where the set $K$ consists of measurable $\theta$ taking values in $[0,1]$ and such that $\int \theta=\int s(x,0)$, and 
   $\text{dist}^2(\theta_0,\theta_1)$, for $\theta_0,\theta_1\in K$, is the $L^2$-Wasserstein distance
   \begin{align*}
       \text{dist}^2(\theta_0,\theta_1)=\inf_{\Phi\in I(\theta_0,\theta_1)}\int \theta_0(x)|\Phi(x)-x|^2dx
   \end{align*}
   with
   \begin{align*}
       I(\theta_0,\theta_1)=\{\Phi\, : \, \int \theta_1(y)\zeta(y)dy=\int \theta_0(x)\zeta(\Phi(x))dx \quad \forall \zeta \in \cC^0_0\}. 
   \end{align*}
Notice that this indeed is a relaxation of the original problem since the 
densities are no longer taking values in $\set{0,1}$ and the transport maps
are not necessarily injective.

The fifth and last step consists of passing to the limit $h\rightarrow 0$ whenever this is possible. In \cite{otto1999}, Otto proved that this is the case for the unstable flat situation \begin{align*}s(x,0)=\left\{\begin{array}{cc} 0 & x_2>0\\ 1 & x_2<0\end{array}\right.,\end{align*}
and that the limit of $\theta_h$ defined by 
\[
\theta_h(x,t):=\theta^{(k)}(x),\quad t\in[kh,(k+1)h)
\]
is the unique entropy solution of the conservation law
\begin{align*}
    \pa_t\theta +\pa_{x_2}\left(\theta(1-\theta)\right)=0.
\end{align*}
For a different proof of this statement we refer to the work of Gigli and Otto \cite{GigliOtto} which in particular also contains a further examination of the relation between the minimizing movements scheme and the entropy condition.

In fact, it was conjectured by Otto \cite{otto_conjecture} that the described scheme, if it converges,  should also lead to an entropy solution of the macroscopic IPM equation in the general, non-flat case. We refer to  Section \ref{sec:main_result} for the definition of entropy solutions.

In the rest of this section we sketch how at least formally system \eqref{eq:subequation_inintroduction}, or rather its equivalent reformulation in terms of $s(x,t)$, arises from the JKO-characterization \eqref{eq:eulerian_JKO_scheme} of the discrete functions $\theta^{(k)}$ when \emph{assuming} suitable convergence. Our presentation here, as well as in Appendix \ref{sec:otto_relaxation_appendix} which contains some more details, is devoted  to convey that  the scheme indeed leads to the macroscopic IPM equation, rather than in providing a rigorous proof which we defer to future work. A similar computation was derived by Otto \cite{otto_conjecture}.

Fix $t$ and denote for simplicity $\theta^0:=\theta_h(t)$, $\theta^1:=\theta_h(t+h)$. Furthermore, let  $\Phi^{h}$ denote the transport map corresponding to $\text{dist}^2(\theta^0,\theta^1)$ and $\bar{\Phi}^{h}$ the transport
map corresponding to $\text{dist}^2(1-\theta^{0},1-\theta^{1})$. Then
it can be shown that there are functions $a^{h}$, $\bar{a}^{h}$
such that
\begin{align*}
    \Phi^h(x)&=x+\left(\nabla a^{h}\circ\Phi^h\right)(x),\\
    \bar{\Phi}^{h}(x)&= x+\left(\nabla \bar{a}^{h} \circ \bar{\Phi}^{h}\right)(x).
\end{align*}
This in fact is a consequence of Brenier's Theorem \cite{brenier_optimal_transport}, still an argument is also provided in Appendix \ref{sec:otto_relaxation_appendix}.

Moreover, it can be deduced from first variations of the functional \eqref{eq:eulerian_JKO_scheme} that
\begin{align}\label{eq:relation_between_a}
a^{h}-\bar{a}^{h}=h x_2.
\end{align}
Now, we write  $a^{h}=h p^{h}$, $\bar{a}^h=h \bar{p}^{h}$ and make the strong assumption that the introduced functions $p^{h},\bar{p}^{h}$ have a well defined $\cC^2$ limit denoted by $p$, $\bar p$. Moreover, we also assume that $\theta_h(t,x)$ is converging in a strong enough sense and denote the limit function by $\theta(t,x)$.

If this is the case we can pass to the limit $h\rightarrow 0$ and obtain, cf. Appendix \ref{sec:otto_relaxation_appendix},
\begin{align}\label{diezbis}
    \pa_t \theta &= -\divv(\theta \nabla p),\\\label{oncebis}
  \pa_t\theta&=  \Delta \bar{p}-\divv(\theta \nabla \bar{p}).
\end{align}

Now \eqref{eq:relation_between_a} yields $p=\bar{p}+x_2$. Thus \eqref{diezbis}, \eqref{oncebis} imply that
\begin{align}\label{veinticuatrobis}
    \Delta \bar{p} =\divv ((\nabla \bar p-\nabla p)) \theta)=-\partial_{x_2}\theta.
\end{align}

Therefore, from \eqref{oncebis} and \eqref{veinticuatrobis}, we deduce
\begin{align*}
    \pa_t\theta=& -\pa_{x_2}\theta -\divv (\nabla \bar{p}\theta)\\
    =& -\pa_{x_2}\theta -\divv (\left(\nabla\bar{p}+\theta e_2\right)\theta)+\divv (\theta^2e_2)
\end{align*}
To finish we define $u=\nabla \bar{p}+\theta e_2$, which clearly satisfies $\divv u=0$, to get
\begin{align*}
    \pa_t \theta +u\cdot \nabla  \theta +\pa_{x_2}\theta -2\theta \pa_{x_2}\theta=&0,\\
    u=&\nabla \bar{p}+\theta e_2,\\
    \divv  u =&0.
\end{align*}
Undoying the change of coordinates from the beginning, i.e., considering 
\[
\rho(t,x)=1-2s(x,2t),
\]
one obtains \eqref{eq:subequation_inintroduction}. As said some more details can be found in Appendix \ref{sec:otto_relaxation_appendix}.

\subsubsection{Transfer to subsolutions}\label{sec:transfer_to_subsolutions}
Now we give an alternative derivation of the macroscopic system \eqref{eq:subequation_inintroduction}, taking a different route after step 1 of Otto's relaxation. i.e., the starting point is again the gradient flow structure of IPM saying that solutions of \eqref{eq:ipm} seek to maximize the dissipation of potential energy at every time instance. However, at this point we do not care in which precise sense the dissipation is maximized (in Lagrangian coordinates with respect to the $L^2$-metric on the manifold of area preserving diffeomorphisms). We instead simply extend the principle of maximal energy dissipation for solutions of \eqref{eq:ipm} to its relaxation given in Theorem \ref{thm:CI}, i.e. we seek to investigate also subsolutions that decrease the potential energy at every time instant as much as possible.

Suppose that $(\rho, v, m)$ is a  subsolution in the sense of Definition \ref{def:subsolutions}. We define its associated relative potential energy 
\begin{align}\label{eq:relative_potential_energy}
    E_{rel}(t):=\int_{\T\times\R}( \rho(t,x)-\rho_0(x)) x_2\:dx
\end{align}
and, for now formally, compute
\begin{align}\label{eq:energy_dissipation}
    \partial_t E_{rel}(t)=-\int_{\T\times\R}x_2\divv  m(t,x) \:dx=\int_{\T\times\R} m_2(t,x)\:dx.
\end{align}
Moreover, similar to \eqref{eq:ansatz_for_m_in_flat_case}, condition \eqref{eq:pointwise_constraint_relaxed} implies 
\begin{align*}
    m=\rho v-\frac{1-\rho^2}{2}e_2+\frac{1-\rho^2}{2}\xi
\end{align*}
almost everywhere for some $\xi:[0,T)\times \T\times\R\rightarrow\R^2$ satisfying $\abs{\xi}<1$. Plugging this into \eqref{eq:energy_dissipation} one deduces
\begin{align*}
    \partial_tE_{rel}(t)=\int_{\T\times\R}\rho v_2-(1-\rho^2)\frac{1-\xi_2}{2}\:dx.
\end{align*}
Hence considering $\rho(t,\cdot)$, and therefore also $v(t,\cdot)$, cf. Section \ref{sec:velocity} below, to be given, one easily sees that the energy dissipation at time $t$ is maximized in the closure of all admissible $\xi$ with the choice $\xi(t,x)=-e_2$.

Hence choosing constantly $\xi=-e_2$, and therefore
\begin{align}\label{eq:maximal_dissipating_choice_for_m}
    m=\rho v-(1-\rho^2)e_2
\end{align}
we deduce that (non-strict) subsolutions that maximize at each time instant the dissipation of potential energy are characterized as solutions of
\begin{align}\begin{split}\label{eq:subequation_after_derivation}
    \partial_t\rho+\divv\left(\rho v-(1-\rho^2)e_2\right)&=0,\\
    \divv v&=0,\\
    v&=-\nabla p-\rho e_2.
    \end{split}
\end{align}

The above formal computation in \eqref{eq:energy_dissipation} can be made rigorous under mild decay assumptions, as for instance shown in Appendix \ref{sec:rigorous_dissipation}. Here however, we like to state some further remarks.

First of all we emphasize that by choosing $m$ as in \eqref{eq:maximal_dissipating_choice_for_m} we do not obtain a subsolution in the sense of Definition \ref{def:subsolutions}, since \eqref{eq:pointwise_constraint_relaxed} holds only in a non-strict sense, thus we speak about a non-strict subsolution. By considering instead 
\begin{equation}\label{eq:choice_of_m_arbitrarily_close}
    m=\rho v-\mu (1-\rho^2)e_2,
\end{equation}
i.e. $\xi=(1-2\mu)e_2$ with $\mu$ arbitrarily close to $1$, but $\mu <1$, one obtains strict subsolutions, and hence actual mixing solutions via Theorem \ref{thm:CI}, arbitrarily close to the non-strict ones with maximal energy dissipation. However, in the remaining part of the paper we will solve \eqref{eq:subequation_after_derivation} as the outstanding case and remark that a similar analysis leads to a subsolution corresponding to the system with $m$ given by \eqref{eq:choice_of_m_arbitrarily_close}, cf. also Remark \ref{rem:remark_after_mainthm}.  

Moreover, we like to point out that in the flat case, where $v=0$, system \eqref{eq:subequation_after_derivation} is exactly the hyperbolic conservation law found in \cite{Sz-ipm}, whose entropy solution corresponds to the maximum speed of expansion of the mixing zone, cf. Section \ref{sec:examples_of_subsolutions}.

Furthermore, we remark that given a strict subsolution $(\rho,v,m)$ with relative potential energy $E_{rel}(t)$ defined in \eqref{eq:relative_potential_energy} one obtains infinitely many mixing solutions $(\rho_{sol},v_{sol})$ as in Theorem \ref{thm:CI} with the additional property that their relative potential energy at almost every time $t$ is arbitrarily close to $E_{rel}(t)$, see Lemma \ref{lem:CI_regarding_potential_energy}. In that sense there also exist actual mixing solutions with potential energy decay arbitrarily close to the maximal decay for subsolutions characterized by \eqref{eq:subequation_after_derivation}.

\tikzset{every picture/.style={line width=0.75pt}} 
\begin{figure}[h!]
    \centering
    \begin{tikzpicture}[x=0.75pt,y=0.75pt,yscale=-1,xscale=1]

\draw    (58,418) -- (162,418) -- (162,472) -- (58,472) -- cycle  ;
\draw (110,445) node   [align=left] {\begin{minipage}[lt]{68pt}\setlength\topsep{0pt}
\begin{center}
macroscopic IPM \eqref{eq:subequation_inintroduction}
\end{center}

\end{minipage}};
\draw    (38,208) -- (182,208) -- (182,258) -- (38,258) -- cycle  ;
\draw (110,233) node   [align=left] {\begin{minipage}[lt]{95.2pt}\setlength\topsep{0pt}
\begin{center}
IPM subsolutions,\\Theorem \ref{thm:CI}\\
\end{center}

\end{minipage}};
\draw    (58,8) -- (162,8) -- (162,62) -- (58,62) -- cycle  ;
\draw (110,35) node   [align=left] {\begin{minipage}[lt]{68pt}\setlength\topsep{0pt}
\begin{center}
IPM  \eqref{eq:ipm}
\end{center}

\end{minipage}};
\draw (110,335) node   [align=left] {\begin{minipage}[lt]{150pt}\setlength\topsep{0pt}
\begin{center}
imposing maximal dissipation of pot. energy,\\Section \ref{sec:transfer_to_subsolutions}
\end{center}

\end{minipage}};
\draw (110,127.5) node   [align=left] {\begin{minipage}[lt]{150pt}\setlength\topsep{0pt}
\begin{center}
reformulation as differential inclusion, \cite{Cordoba_Faraco_Gancedo,Sz-ipm}, and relaxation, \cite{Sz-ipm}
\end{center}

\end{minipage}};
\draw    (348,418) -- (542,418) -- (542,472) -- (348,472) -- cycle  ;
\draw (445,445) node   [align=left] {\begin{minipage}[lt]{129.2pt}\setlength\topsep{0pt}
\begin{center}
well-posed variational problems \cite[(2.10)]{otto1999}
\end{center}

\end{minipage}};
\draw (260,480) node   [align=left] {\begin{minipage}[lt]{108.8pt}\setlength\topsep{0pt}
\begin{center}
Eulerian coordinates and $h \rightarrow 0$,\\\cite{otto1999}, Steps 4-5
\end{center}

\end{minipage}};
\draw    (348,8) -- (542,8) -- (542,62) -- (348,62) -- cycle  ;
\draw (445,35) node   [align=left] {\begin{minipage}[lt]{129.2pt}\setlength\topsep{0pt}
\begin{center}
gradient flow for potential energy, \eqref{eq:gradient_flow}
\end{center}

\end{minipage}};
\draw (255,60) node   [align=left] {\begin{minipage}[lt]{129.2pt}\setlength\topsep{0pt}
\begin{center}
Lagrangian coordinates,\\\cite{otto1999}, Step 1
\end{center}

\end{minipage}};
\draw (445,235) node   [align=left] {\begin{minipage}[lt]{156.4pt}\setlength\topsep{0pt}
\begin{center}
time step h and relaxation of discrete variational problems,\\\cite{otto1999}, Steps 2-3
\end{center}

\end{minipage}};
\draw    (110,258) -- (110,306)(110,366) -- (110,415) ;
\draw [shift={(110,418)}, rotate = 270] [fill={rgb, 255:red, 0; green, 0; blue, 0 }  ][line width=0.08]  [draw opacity=0] (8.93,-4.29) -- (0,0) -- (8.93,4.29) -- cycle    ;
\draw    (110,62) -- (110,98)(110,161) -- (110,205) ;
\draw [shift={(110,208)}, rotate = 270] [fill={rgb, 255:red, 0; green, 0; blue, 0 }  ][line width=0.08]  [draw opacity=0] (8.93,-4.29) -- (0,0) -- (8.93,4.29) -- cycle    ;
\draw    (348,445) -- (165,445) ;
\draw [shift={(162,445)}, rotate = 360] [fill={rgb, 255:red, 0; green, 0; blue, 0 }  ][line width=0.08]  [draw opacity=0] (8.93,-4.29) -- (0,0) -- (8.93,4.29) -- cycle    ;
\draw    (445,62) -- (445,203)(445,267) -- (445,415) ;
\draw [shift={(445,418)}, rotate = 270] [fill={rgb, 255:red, 0; green, 0; blue, 0 }  ][line width=0.08]  [draw opacity=0] (8.93,-4.29) -- (0,0) -- (8.93,4.29) -- cycle    ;
\draw    (162,35) -- (345,35) ;
\draw [shift={(348,35)}, rotate = 180] [fill={rgb, 255:red, 0; green, 0; blue, 0 }  ][line width=0.08]  [draw opacity=0] (8.93,-4.29) -- (0,0) -- (8.93,4.29) -- cycle    ;

\end{tikzpicture}
    \caption{Relaxation of IPM in Eulerian coordinates via subsolutions on the left and in Lagrangian coordinates via minimizing movements on the right.}
    \label{fig:epic_diagram}
\end{figure}

\subsubsection{Comparison to selection criteria in related problems}\label{sec:comparison}
As mentioned in Section \ref{sec:subsolution_selection_problem} the selection of a meaningful subsolution is a general problem when studying hydrodynamic instabilities via differential inclusions. We briefly give an overview of previously applied selection criteria.

In the case of a perfectly flat interface the selection typically is done by reducing the subsolution system to a one-dimensional hyperbolic conservation law and picking the unique entropy solution as a natural candidate. This has been done in the context of the Kelvin-Helmholtz instability for the Euler equations \cite{Sz-KH}, the Rayleigh-Taylor instability for the inhomogeneous Euler equations \cite{GKSz}, and as discussed in all detail above for the flat unstable Muskat problem in IPM \cite{Sz-ipm}.

Another approach, selecting the subsolution that at initial time maximizes the total energy dissipation, has been applied in the context of the non-flat Kelvin-Helmholtz instability \cite{Mengual_Sz_vortex_sheet} within the class of all subsolutions with vorticity concentrated on a finite number of sheets, and thereafter in the class of one-dimensional self-similar subsolutions emanating from the flat Rayleigh-Taylor instability modelled by the Euler equations in Boussinesq approximation \cite{GKBou}. This strategy has been motivated by Dafermos' entropy rate admissibility criterion \cite{Dafermos_entropy_rate}, which has also been investigated in \cite{Chiodaroli_Kreml_energy_dissipation,Feireisl} for convex
integration solutions of the compressible Euler equations. In view of Section \ref{sec:transfer_to_subsolutions} also the selection criterion considered in the present paper falls into that category. However, in contrast to  \cite{Mengual_Sz_vortex_sheet,GKBou} the selection applies among all possible subsolutions (with certain natural decay at infinity) and not only within a special subclass, and it applies at all times instead of only the initial time. 

Another way to select subsolutions globally in time has been studied in \cite{GKHirsch} in the context of the flat Rayleigh-Taylor instability for the Euler equations in Boussinesq approximation. Similar as in Section \ref{sec:transfer_to_subsolutions} above, the underlying geometric principle of the equation, in that case the least action principle, has been imposed on the level of subsolutions leading to a degenerate elliptic variational problem that turns out to be formally equivalent to the direct relaxation of the least action principle by Brenier \cite{Brenier89}. However, solutions obtained from this relaxation conserve the total energy, which is inconsistent with anomalous energy dissipation present in turbulent regimes. In view of that, in \cite{GKHirsch} an additional term, responsible for energy dissipation, but subject to certain choices, has been added in the variational problem. In contrast the here considered relaxation of IPM is not relying on any comparable choices. 

\section{The main result}\label{sec:main_result}
According to the previous section we consider on $\T\times\R$ the system
\begin{align}\begin{split}\label{eq:subequation}
\partial_t \rho +\divv(\rho v +\rho^2e_2)&=0,\\
\divv v&=0,\\
v&=-\nabla p-\rho e_2
\end{split}
\end{align}
with initial data \eqref{eq:initial_data}, i.e.
\begin{equation*}
\rho_0(x)=\begin{cases}
+1,&x_2>\gamma_0(x_1),\\
-1,&x_2<\gamma_0(x_1)
\end{cases}
\end{equation*}
for a sufficiently regular function $\gamma_0:\T\rightarrow\R$. In fact we here consider the case of a real analytic initial interface. For completeness we also state the notion of a general weak solution to system \eqref{eq:subequation}.
\begin{definition}\label{def:solution_ipmsub}
A pair $\rho\in L^\infty((0,T)\times\T\times\R)$, $v\in L^\infty(0,T;L^2(\T\times\R;\R^2))$ is a solution of \eqref{eq:subequation}, \eqref{eq:initial_data} provided for any $\varphi\in\cC_c^\infty([0,T)\times\T\times\R)$ there holds
\begin{align*}
    \int_0^T\int_{\T\times\R}\rho\partial_t\varphi+(\rho v+\rho^2 e_2)\cdot\nabla \varphi\:dx\:dt+\int_{\T\times\R}\rho_0\varphi(0,\cdot)\:dx=0,\\
    \int_0^T\int_{\T\times\R}v\cdot \nabla\varphi\:dx\:dt=0,\\
    \int_0^T\int_{\T\times\R}(v+\rho e_2)\cdot\nabla^\perp \varphi\:dx\:dt=0.
\end{align*}
\end{definition}

\begin{theorem}\label{thm:main}
Let $\gamma_0:\T\rightarrow\R$ be real analytic. Then the initial value problem \eqref{eq:subequation}, \eqref{eq:initial_data} has a local in time solution with the following properties
\begin{enumerate}[(i)]
    \item \label{eq:property_i_mainthm}$\rho$ and $v$ are continuous on $[0,T)\times\T\times\R\setminus\set{(0,x_1,\gamma_0(x_1)):x_1\in\T}$,
    \item \label{eq:property_ii_mainthm}at positive times $\rho(t,\cdot)$ is Lipschitz continuous, $v(t,\cdot)$ is log-Lischitz continuous with
    \begin{gather}\label{eq:Lipschitz_bound_rho}
        \norm{\nabla\rho(t,\cdot)}_{L^\infty(\T\times\R)}\leq C_0t^{-1},\\
        \abs{v(t,x)-v(t,x')}\leq C_0t^{-1}\abs{(x-x')\log\abs{x-x'}}
    \end{gather}
    for $t\in(0,T)$, $x,x'\in\T\times\R$, $\abs{x-x'}\leq 1/2$ and a constant $C_0>0$ depending on $\gamma_0$,
    \item \label{eq:property_iii_mainthm}
    for $t\in(0,T)$ there exist two real analytic curves $\gamma_t(\cdot,\pm 1):\T\rightarrow\R$ such that $\rho(t,x)=1$ whenever $x_2\geq\gamma_t(x_1,1)$ and $\rho(t,x)=-1$ whenever $x_2\leq\gamma_t(x_1,-1)$. Moreover, $\rho(t,\cdot)$ maps the remaining set into $(-1,1)$. Also there the level sets $\Gamma_{t}(h):=\set{x\in\T\times\R:\rho(t,x)=h}$, $h\in(-1,1)$ are given by graphs of real analytic functions $\gamma_t(\cdot,h):\T\rightarrow\R$. Furthermore, the joint map $[0,T)\times\T\times[-1,1]\rightarrow \R$, $(t,x_1,h)\mapsto \gamma_t(x_1,h)$ belongs to the space $\cC^1([0,T);\cC^1(\T\times[-1,1]))$ and there exists a real analytic function $s_0:\T\rightarrow\R$ such that
    \begin{equation}\label{eq:expansion_of_level_sets}
        \gamma_t(x_1,h)=\gamma_0(x_1)+t(2h+s_0(x_1))+o(t)
    \end{equation}
    with respect to $\norm{\cdot}_{\cC^{1}(\T\times[-1,1])}$ as $t\rightarrow 0$,
    \item\label{eq:property_iv_mainthm} for any locally Lipschitz continuous $\eta:\R\rightarrow\R$ there holds the balance
    \begin{equation}\label{eq:entropy_with_equality}
        \partial_t (\eta(\rho))+\divv\big( \eta(\rho)v+Q(\rho)e_2\big)=0,
    \end{equation}
    with initial data $\eta(\rho)(0,\cdot)=\eta(\rho_0)$ and flux $Q(\rho):=\int_0^\rho 2\eta'(s)s\:ds$.
\end{enumerate}
\end{theorem}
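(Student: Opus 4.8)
The plan is to avoid solving \eqref{eq:subequation} directly and instead to track the density through its level sets. In the flat case $\gamma_0\equiv0$ the entropy solution has level set $\Gamma_t(h)=\T\times\{2th\}$, so in general one should expect that for small $t>0$ the density equals $\pm1$ outside a thin mixing strip and that, inside it, the level sets $\Gamma_t(h)$, $h\in(-1,1)$, are graphs $x_2=\gamma_t(x_1,h)$. First I would pass, as in Sections~\ref{sec:prelim_obs}--\ref{sec:ansatz_level_sets}, to rescaled coordinates in which the Burgers-type spreading $2th$ of the level sets is built into the change of variables --- the level $h$ being carried by a variable $y_2\in(-2,2)$ --- and write the residual evolution of the level sets in superlinear order as $t^{1+\alpha}\eta(t,y)$ for a small $\alpha\in(0,1)$. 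Inserting this ansatz into the transport law governing the level sets (equivalently into $\partial_t\rho+v\cdot\nabla\rho+2\rho\,\partial_{x_2}\rho=0$ read along the Lagrangian flow, with the velocity recovered from $\rho$ through the Biot--Savart operator $v=T[\rho]$) and cancelling all contributions up to order $t$ --- which is what forces the first-order term $h_0(y)$, produced by $v$ across the collapsing mixing zone and hence depending on $\gamma_0$, and fixes the coefficient $s_0$ in \eqref{eq:expansion_of_level_sets} --- reduces the initial value problem to the nonlocal fixed point equation \eqref{eq:fixed_point_equation_inintroduction} for $\eta$. Its structure has two features that make it hard: the right-hand side loses one $y_1$-derivative (after integrating the order~$-1$ Biot--Savart kernel in $z$ one still carries an uncompensated $\partial_{y_1}\eta(s,y)$), and the kernels $K_s[\xi]$ degenerate as $s\to0$ to an order~$0$ kernel with singularity $\sim1/(y_1-z_1)$, so the order~$-1$ bounds one would use to absorb $\partial_{y_1}\eta(s,z)$ are not uniform in $s$.

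The core of the argument is to solve \eqref{eq:fixed_point_equation_inintroduction}. Because of the derivative loss there is no contraction in a fixed function space, so I would use the hypothesis that $\gamma_0$ is real analytic and work in a scale of Banach spaces $X_r$ of functions analytic in $y_1$ with radius of analyticity $r$, letting each application of the right-hand side cost an arbitrarily small amount of radius (the lost derivative being paid for by a $(r-r')^{-1}$ factor) and closing the iteration with an abstract Nirenberg--Nishida / Cauchy--Kovalevskaya scheme, a version adapted to the present singular-in-time situation being proved in Appendix~\ref{sec:CK_proof}. What must be supplied is: (a) analytic-norm estimates for the kernels $K_s[\xi]$ and the factors $h_s[\partial_{y_1}\xi]$ that are \emph{not} uniform as $s\to0$ but are integrable in $s$ with a margin of positive powers of $s$, using the difference structures $\xi(y)-\xi(z)$ and $\partial_{y_1}\xi(y)-\partial_{y_1}\xi(z)$ to trade the kernel's singularity against one derivative of $\xi$ at the point $z$; and (b) a corresponding Lipschitz-in-$\eta$ version of the same. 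The mechanism that makes it work is that the time integral $\int_0^t(\cdots)\,ds$ gains enough extra powers of $t$ to defeat the singular prefactor $t^{-(1+\alpha)}$ in \eqref{eq:fixed_point_equation_inintroduction}, while one iterates in a time-weighted space compatible with the explicit $-t^{-\alpha}h_0$ term (which, $h_0$ being analytic, poses no difficulty in the spatial scale); on a short time interval $[0,T)$ one then gets a contraction and a unique $\eta$. I expect item (a) to be the main obstacle, and the technical heart of the paper: proving kernel bounds that are non-uniform as $s\to0$ and yet, integrated against $ds$, still beat the prefactor without destroying the analytic-scale structure --- in particular accommodating the degeneration of $K_s[\xi]$ from an order~$-1$ to an order~$0$ kernel precisely at the endpoint $s=0$. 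This is Section~\ref{sec:solving}.

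Granting such an $\eta\in\cC^1([0,T);\cC^1(\T\times[-1,1]))$, analytic in $y_1$ and small in the relevant norm, the remaining step (Section~\ref{sec:justification}) is to read off the solution of \eqref{eq:subequation}, \eqref{eq:initial_data} and the properties \ref{eq:property_i_mainthm}--\ref{eq:property_iv_mainthm}. Undoing the change of variables, the ansatz defines $\gamma_t(x_1,h)$ on $[0,T)\times\T\times[-1,1]$, real analytic in $(x_1,h)$ for each $t>0$ and jointly $\cC^1$; inverting in $h$ on the mixing zone and setting $\rho=\pm1$ outside gives \ref{eq:property_iii_mainthm} and the first-order expansion \eqref{eq:expansion_of_level_sets} (with $s_0$ read off from $h_0$), and then $v=T[\rho]$. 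Property \ref{eq:property_i_mainthm} holds because $\gamma_t\to\gamma_0$ in $\cC^1$ as $t\to0$ and the only discontinuity of $\rho$ and of $v$ is on the initial interface. For \ref{eq:property_ii_mainthm}, a $t$- or $x$-derivative of $\rho$ falls on the factor $1/(2t)$ coming from inverting $h\mapsto\gamma_t(\cdot,h)\approx\gamma_0(x_1)+2th$, which yields $\norm{\nabla\rho(t,\cdot)}_{L^\infty(\T\times\R)}\le C_0t^{-1}$, and applying the $0$-order operator $T$ to a density with Lipschitz constant of size $t^{-1}$ gives the stated $t^{-1}$ log-Lipschitz modulus for $v$. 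Finally \ref{eq:property_iv_mainthm}: for $t>0$ the density is Lipschitz and solves $\partial_t\rho+\divv(\rho v)+\partial_{x_2}(\rho^2)=0$ classically, so for a $\cC^1$ entropy $\eta$ the chain rule and $\divv v=0$ give $\partial_t\eta(\rho)+\divv(\eta(\rho)v)+\eta'(\rho)\partial_{x_2}(\rho^2)=0$ and $\eta'(\rho)\partial_{x_2}(\rho^2)=\partial_{x_2}Q(\rho)$ with $Q(\rho)=\int_0^\rho 2\eta'(s)s\,ds$; the general Lipschitz case follows by approximation, and the initial datum $\eta(\rho_0)$ is attained as $t\to0$ by \ref{eq:property_i_mainthm} and dominated convergence. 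In particular the balance holds with $\le$ for convex $\eta$, so the constructed solution is an entropy solution in the sense of Definition~\ref{def:entropy_solution}, consistently with the flat case where it reduces to the entropy solution of Burgers' equation.
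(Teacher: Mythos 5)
Your proposal follows essentially the same route as the paper: the rescaled level-set ansatz $x_2=ty_2+f(t,y)$ with residual $\tfrac12 t^{1+\alpha}\eta$, reduction to the singular fixed-point equation for $\eta$, its solution via an adapted Nirenberg--Nishida abstract Cauchy--Kovalevskaya theorem in a scale of Banach spaces analytic in $y_1$ (Sections \ref{sec:ansatz_level_sets}--\ref{sec:solving}), and the final verification of (i)--(iv) by undoing the change of variables exactly as in Section \ref{sec:justification}. The outline is correct; the technical content you defer (the kernel estimates non-uniform as $s\to0$ but integrable in $s$, and their Lipschitz-in-$\eta$ versions) is precisely what the paper supplies in Lemmas \ref{lem:good_sets_for_K_2}--\ref{lem:estimate_for_F0}.
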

\begin{remark}\label{rem:remark_after_mainthm} a) In fact the function $s_0:\T\rightarrow\R$ appearing in \eqref{eq:expansion_of_level_sets} is precisely the normal part of the initial velocity when evaluated in $(x_1,\gamma_0(x_1))$. See Section \ref{sec:velocity}, in particular equation \eqref{eq:initial_normal_speed_sec5}, for the definition and further discussion.

b) Note that \ref{eq:property_iii_mainthm} implies that $\rho$ is piecewise $\cC^1$ with the exceptional set given by $\set{(t,x_1,\gamma_t(x_1,\pm 1):t\in[0,T),~x_1\in\T}$.

c) Equation \eqref{eq:entropy_with_equality} is apriori understood in analogy to Definition \ref{def:solution_ipmsub}, i.e. in a distributional sense. However, given the regularity of $\rho$ and $v$ it in fact holds pointwise almost everywhere on $(0,T)\times\T\times\R$, cf. Section \ref{sec:justification}. 

d) Since convex functions are locally Lipschitz, the balance \eqref{eq:entropy_with_equality} in particular states that $\rho$ is an entropy solution for the conservation law $\partial_t\rho+\divv\big(\rho v+\rho^2e_2)=0$, cf. Definition \ref{def:entropy_solution} below. 

e) We notice that, for an analytic initial interface,  the Muskat equation \eqref{eq:muskat_equation} can be solved for short time in order to find a solution to the macroscopic IPM system \eqref{eq:subequation}, which at the same time is also a solution for IPM (see \cite{CCFGM} and in the case of the vortex-sheet problem \cite{naive}).  However, this solution is not an entropy solution. Moreover, piece-wise constant solutions of \eqref{eq:subequation} also could be constructed but again they would not be entropy solutions. 

f) As discussed earlier in Section \ref{sec:transfer_to_subsolutions} the solution $(\rho,v)$ given by Theorem \ref{thm:main} induces only a non-strict subsolution by setting  $m:=\rho v-(1-\rho^2)e_2$. However, an analoguous existence statement remains true when replacing the first equation of \eqref{eq:subequation} by
\begin{align*}
    \partial_t\rho+\divv(\rho v+\mu\rho^2e_2)=0
\end{align*}
corresponding to a choice of $m$ as in \eqref{eq:choice_of_m_arbitrarily_close} and thus to strict subsolutions when $\mu<1$. This can be seen for instance by rescaling time and considering the nonlocal velocity field $\mu^{-1}v$ in Sections \ref{sec:level_set_formulation}, \ref{sec:solving}.

g) Notice that \ref{eq:property_iii_mainthm} describes precisely the mixing zone $\mathscr U$ of the subsolution, cf. Definition \ref{def:subsolutions}, where the corresponding solutions develop a mixing behaviour. In particular, from \eqref{eq:expansion_of_level_sets} one can deduce the initial growth of the mixing zone, which is linear in time. When combined with \cite{Castro_Faraco_Mengual}, it also implies the observed degraded mixing property of solutions (the closer to the upper boundary, the bigger the volume fraction of the heavier fluid).
In particular by letting $\mu$ tend to one, our method predicts a unique mixing zone selected by maximal potential energy dissipation which can be compared with experiments, as opposed to subsolutions where the mixing zone depends on an a priori ansatz.

h) The time of existence $T>0$ of the found solution depends on how good $\gamma_0$ can be extended holomorphically onto a complex strip, see e.g. Lemma \ref{lem:initial_speed}. In addition $T$ is capped by $1$. While the latter is an artifical bound making our proof of existence at some points slightly less technical, the former dependence is naturally appearing in proofs relying on Cauchy-Kovalevskaya Theorems. The question regarding a global in time solution, may it be as a general entropy solution or as a solution of the level set formulation introduced in Section \ref{sec:level_set_formulation}, is open. 

i) The choice of the periodic infinite strip $\T\times\R$ as our spatial domain seemed to us as the least technical choice. Compared to the whole plane $\R^2$ one does not need to speak about decay/flatness at $x_1\rightarrow\pm\infty$, still we believe that our approach can be adapted to that setting. The same is true for the bounded periodic domain $\T\times(0,1)$ where the necessary estimates for the Biot-Savart kernel, cf. Lemma \ref{lem:good_sets_for_K_2}, have to be derived on a more abstract level. However, the situation in a bounded domain with vertical boundaries is more delicate and not in the scope of this paper.
\end{remark}

For completeness we add in the following the notion of an entropy solution for equation \eqref{eq:subequation}. Note that \eqref{eq:subequation} is a nonlocal hyperbolic conservation law. As is common for such equations, cf. e.g. \cite{amadori2012,amorim2012,betancourt2011,blandin2016,colomboR2012}, the notion of an entropy solution is the one for the corresponding local conservation law where the otherwise nonlocal velocity field is considered as a fixed local one:
\begin{definition}[Entropy solution]\label{def:entropy_solution}
    A solution $(\rho,v)$ in the sense of Definition \ref{def:solution_ipmsub} is called an entropy solution provided for any $\varphi\in \cC^\infty_c([0,T)\times\T\times\R)$, $\varphi\geq 0$ and any convex $\eta:\R\rightarrow \R$ with induced flux $Q(\rho):=\int_0^\rho 2\eta'(s)s\:ds$ there holds
    \begin{align*}
        \int_0^T\int_{\T\times\R}\eta(\rho)\partial_t\varphi+(\eta(\rho)v+Q(\rho)e_2)\cdot\nabla\varphi\:dx\:dt+\int_{\T\times\R}\eta(\rho_0)\varphi(0,\cdot)\:dx\geq 0.
    \end{align*}
\end{definition}
We remark that typically the set of $\eta$ for which the stated imbalance is required to hold is taken to be a strict subset of all convex functions, such as for instance the Kru\v{z}kov family $\set{r\mapsto\abs{r-c}:c\in\R}$, \cite{kruzkov1970}, see also \cite{dafermos_book}. Since our solution anyhow satisfies the stronger condition \ref{eq:property_iv_mainthm}, we refrain at this point from restricting the set of entropies.

In any case, due to the nature of the nonlocality of our velocity field, which is a zero order singular integral operator with respect to the density $\rho$ (see Section \ref{sec:velocity}) the uniqueness of the found entropy solution remains open. 

\section{Level set formulation}\label{sec:level_set_formulation}

We begin our investigation with a look at the illustrative example of a perfectly flat initial interface $\gamma_0(x_1)=0$ (Section \ref{sec:flat_interface}) and some known facts concerning the nonlocal velocity field $v$, in particular at initial time, in the non-flat case (Section \ref{sec:velocity}).
Thereafter, with the beginning of Section \ref{sec:ansatz_1}, we will reformulate problem \eqref{eq:subequation}, \eqref{eq:initial_data} as a suitable fixed point problem.

\subsection{The flat interface}\label{sec:flat_interface}
In the prefectly flat case, $\gamma_0=0$, a $x_1$-independent solution of equation \eqref{eq:subequation} is obtained by observing that $v=0$ and solving the Riemann problem for Burgers' equation
\begin{align*}
    \partial_t\rho+\partial_{x_2}(\rho^2)=0,\quad \rho(0,x_2)=\sign(x_2).
\end{align*}
The unique entropy solution is Lipschitz continuous at positive times and explicitly given by 
\[
\rho(t,x)=\begin{cases}
1,&x_2>2t,\\
\frac{x_2}{2t},&\abs{x_2}\leq 2t,\\
-1,&x_2<-2t.
\end{cases}
\]
As discussed earlier, cf. Section \ref{sec:examples_of_subsolutions}, this solution bounds the mixing zone in the class of all one-dimensional IPM-subsolutions.

However, in rescaled coordinates $y\mapsto x$, $x=(y_1,ty_2)$ the solution is given by the stationary profile
\begin{align}\label{eq:reformulation_rho_burgers}
    \rho(t,y_1,ty_2)=\phi_0(y):=\begin{cases}
1,&y_2>2,\\
\frac{1}{2}y_2,&\abs{y_2}\leq 2,\\
-1,&y_2<-2,
\end{cases}
\end{align}
or in other words the level sets $\rho(t,\cdot)^{-1}(\{h\})$, $h\in(-1,1)$ are given by flat lines $\set{x:x_2=2ht}$ that as time evolves are pulled apart with speed $2h$.

Of course these are simple reformulations, but a key point in our analysis is an appropriate extension of this principle to the general, non-flat case where the velocity field does not vanish. This will be done by keeping the profile $\phi_0(y)$ on the right-hand side of \eqref{eq:reformulation_rho_burgers} and allowing the transformation $y\mapsto x$ to be of the type $x=(y_1,ty_2+f(t,y))$, i.e. we keep the ``pulling''-term $ty_2$ dealing with the Burgers' term $\partial_{x_2}(\rho^2)$ in the equation and allow the level sets to have a general form reacting to the nonlocal velocity field. The details in terms of induced equations for $f$ are in Sections \ref{sec:ansatz_1} to \ref{sec:ansatz3}.

\subsection{Biot-Savart and the initial velocity field}\label{sec:velocity}

The flat case discussed in the previous subsection is a very special case in the sense that $v=0$ and the resulting equation is local.
In the general case a key feature of both systems, IPM and the relaxation, is the nonlocal relation between the density $\rho$ and the velocity field $v$. More precisely,  the last two equations in \eqref{eq:ipm}, \eqref{eq:subequation} resp., i.e. the incompressibility condition and Darcy's law, can be understood by means of a $0$-order convolution operator. Indeed, taking the curl of Darcy's law one sees that at each time $v(t,\cdot)$ is an incompressible vectorfield with vorticity given by
\begin{equation}\label{eq:vorticity_of_v}
    \partial_{x_1}v_2(t,x)-\partial_{x_2}v_1(t,x)=-\partial_{x_1}\rho(t,x).
\end{equation}
Thus when requiring decay as $\abs{x_2}\rightarrow\infty$ the velocity field $v$ is, at least in the case of our interest, uniquely determined in terms of the Biot-Savart operator
\begin{equation}\label{eq:biot_savart}
    v(t,x)=(K*(-\partial_{x_1}\rho(t,\cdot)))(x)=\int_{\T\times\R}K(x-z)(-\partial_{x_1}\rho(t,z))\:dz.
\end{equation}
On $\T\times\R$ the kernel $K$ is given by
\begin{equation}
    K(z):=\frac{1}{4\pi}\frac{(-\sinh(z_2),\sin(z_1))^T}{\cosh(z_2)-\cos(z_1)},
\end{equation}
and as usual, $K$ is the orthogonal gradient of the corresponding Green's function 
\begin{equation}\label{eq:greens_function}
    G(z):=\frac{1}{4\pi}\log(\cosh(z_2)-\cos(z_1)).
\end{equation}

Relation \eqref{eq:biot_savart} has to be interpreted accordingly at initial time $t=0$ due to the fact that $-\partial_{x_1}\rho_0$ is only a measure supported on the interface 
\[
\Gamma_0:=\set{(x_1,\gamma_0(x_1)):x_1\in\T}.
\]
Thus the initial velocity field $v_0(x)$ is the one of a vortex-sheet and therefore discontinuous across the interface.
\begin{lemma}\label{lem:formulas_v0}
The unique square integrable solution of 
\begin{equation}\label{eq:div_curl_system_at_initial_time}
v=-\nabla p-\rho_0e_2,\quad \divv v=0\quad\text{on }\T\times\R
\end{equation}
is given by
\begin{align}\label{eq:initial_velocity_away_from_interface}
    v_0(x)=\int_{\T}K\begin{pmatrix}x_1-z_1\\x_2-\gamma_0(z_1)\end{pmatrix}2\gamma_0'(z_1)\:dz_1
\end{align}
for $x\notin\Gamma_0$, while the one-sided limits at $\Gamma_0$ are given by
\begin{align}\begin{split}\label{eq:initial_velocity_at_interface}
    \lim_{\underset{\pm(y_2-\gamma_0(y_1))>0}{y\rightarrow (x_1,\gamma_0(x_1))}}v_0(y)=p.v.\int_{\T}K\begin{pmatrix}x_1-z_1\\\gamma_0(x_1)-\gamma_0(z_1)    \end{pmatrix}&2\gamma_0'(z_1)\:dz_1\\
    &\mp \frac{\gamma_0'(x_1)}{1+\gamma_0'(x_1)^2}\begin{pmatrix}1\\\gamma_0'(x_1)
    \end{pmatrix}.
    \end{split}
\end{align}
\end{lemma}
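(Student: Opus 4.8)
\textbf{Proof proposal for Lemma \ref{lem:formulas_v0}.}

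The plan is to solve the div-curl system \eqref{eq:div_curl_system_at_initial_time} by exhibiting a potential. Taking the curl of $v = -\nabla p - \rho_0 e_2$ shows that $v$ is divergence-free with $\curl v = -\partial_{x_1}\rho_0$, which is (twice) the arclength-type measure on the graph $\Gamma_0$; more precisely, for a test function $\varphi$, $\langle -\partial_{x_1}\rho_0,\varphi\rangle = -\int_{\T} 2\gamma_0'(z_1)\varphi(z_1,\gamma_0(z_1))\,dz_1$ after differentiating the jump formula $\rho_0(x) = \operatorname{sign}(x_2 - \gamma_0(x_1))$ and integrating by parts (the jump of $\rho_0$ across $\Gamma_0$ equals $2$, and the $x_1$-derivative of the indicator produces the $\gamma_0'$ weight). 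Since the only square-integrable divergence-free field on $\T\times\R$ with zero curl is zero (there is no harmonic $L^2$ field on the strip with the required decay), uniqueness is immediate, and it remains to verify that the candidate \eqref{eq:initial_velocity_away_from_interface} has the correct curl and divergence and lies in $L^2$.

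The key step is to identify the right-hand side of \eqref{eq:initial_velocity_away_from_interface} as the Biot-Savart convolution \eqref{eq:biot_savart} against this measure. Writing $v_0 = K * \mu$ with $\mu = -\partial_{x_1}\rho_0$, the pairing above gives exactly $v_0(x) = \int_{\T} K(x_1 - z_1, x_2 - \gamma_0(z_1))\, 2\gamma_0'(z_1)\,dz_1$ for $x\notin\Gamma_0$, where the integrand is smooth and the integral converges absolutely. Since $K = \nabla^\perp G$ with $G$ the Green's function \eqref{eq:greens_function} for $\Delta$ on $\T\times\R$, the field $v_0$ is automatically divergence-free away from $\Gamma_0$ and satisfies $\curl v_0 = -\Delta (G * \mu)^{\ldots}$, wait---more directly, $\divv v_0 = 0$ and $\curl v_0 = \mu$ hold distributionally because $\nabla^\perp\cdot\nabla^\perp G = \Delta G = \delta$; one checks the $L^2$ bound using the exponential decay $|K(z)| \lesssim e^{-|z_2|}$ as $|z_2|\to\infty$ together with the integrability of the $1/|z_1|$-type singularity of $K$ near $z = 0$ in two dimensions against a bounded density on a curve. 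This establishes \eqref{eq:initial_velocity_away_from_interface}.

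For the one-sided limits \eqref{eq:initial_velocity_at_interface}, the plan is a standard Plemelj / jump-relation computation for single-layer-type potentials. Decompose $K(x_1 - z_1, x_2 - \gamma_0(z_1))$ near the diagonal: the leading singular part, after using $\gamma_0(x_1) - \gamma_0(z_1) \approx \gamma_0'(x_1)(x_1 - z_1)$, behaves like a Cauchy kernel along the tangent line to $\Gamma_0$ at $(x_1,\gamma_0(x_1))$, and produces (a) a principal-value integral as $x$ approaches $\Gamma_0$ along the curve, plus (b) a jump term of size $\mp$ half the local coefficient, depending on which side the limit is taken. Concretely, one parametrizes $\Gamma_0$ by arclength or by $z_1$, expands $\cosh$ and $\cos$ to second order near $z_1 = x_1$ so that the denominator $\cosh(x_2 - \gamma_0(z_1)) - \cos(x_1 - z_1)$ is comparable to $\tfrac12((x_1 - z_1)^2 + (x_2 - \gamma_0(z_1))^2)$, and then the jump is computed from the elementary one-dimensional integral $\int_{\R} \frac{\epsilon}{s^2 + \epsilon^2}\,ds = \pi\,\operatorname{sign}(\epsilon)$; tracking the geometric factors $\gamma_0'(x_1)$ and the normalization $1 + \gamma_0'(x_1)^2$ yields the stated jump $\mp \frac{\gamma_0'(x_1)}{1+\gamma_0'(x_1)^2}(1,\gamma_0'(x_1))^T$. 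The main obstacle I anticipate is carrying out this jump computation cleanly: one must be careful that the limit is taken \emph{vertically} (as written, $y \to (x_1,\gamma_0(x_1))$ with $\pm(y_2 - \gamma_0(y_1)) > 0$) rather than normally to $\Gamma_0$, so the decomposition into tangential principal value and normal jump involves the slope $\gamma_0'$ explicitly and the bookkeeping of which component of $K$ contributes the $\sin$ versus the $\sinh$ part must be done with care. The remaining error terms (from the Taylor remainders in $\cosh$, $\cos$, and $\gamma_0$) are continuous up to $\Gamma_0$ and contribute only to the principal-value integral, which is why the analyticity of $\gamma_0$ is not needed here---mere $\cC^{1,\alpha}$ would suffice for this lemma, though the analyticity will matter later.
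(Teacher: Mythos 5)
Your proposal is correct and follows the same skeleton as the paper: represent $v_0$ as the Biot--Savart field of the vortex sheet $-\partial_{x_1}\rho_0=2\gamma_0'\,\delta_{\Gamma_0}$, get uniqueness from the fact that an $L^2$ divergence-free, curl-free field on $\T\times\R$ vanishes (the paper phrases this as ``standard elliptic estimates'' after noting the candidate's exponential decay), and obtain the one-sided limits from a Plemelj-type jump relation. The only genuine difference is how the jump is produced: the paper splits $K(z)=\tfrac{1}{2\pi}\tfrac{z^\perp}{|z|^2}\eta(z_1)+K_{reg}(z)$ with a smooth remainder, rewrites the singular part in complex notation as a Cauchy integral over $\Gamma_0$ with density $\tfrac{2\gamma_0'}{1+i\gamma_0'}$, and then simply quotes the Sokhotski--Plemelj formula, whereas you re-derive the jump by hand, Taylor-expanding $\cosh-\cos$ near the diagonal and extracting the $\mp\tfrac12$-density term from the Poisson-kernel integral $\int_\R \varepsilon/(s^2+\varepsilon^2)\,ds=\pi$. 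Your route is more elementary and self-contained (and makes transparent, as you note, that $\cC^{1,\alpha}$ regularity of $\gamma_0$ suffices for this lemma), at the cost of bookkeeping that the paper outsources to the classical theorem; both yield the same formula \eqref{eq:initial_velocity_at_interface}. Two small repairs: your intermediate distributional identity has a sign slip --- with $\rho_0=\operatorname{sign}(x_2-\gamma_0(x_1))$ one gets $\langle -\partial_{x_1}\rho_0,\varphi\rangle=+\int_\T 2\gamma_0'(z_1)\varphi(z_1,\gamma_0(z_1))\,dz_1$, which is what your final formula actually uses; and on the non-simply-connected strip, ``correct curl and divergence'' alone does not yet give the existence of a pressure, so to conclude that the candidate solves \eqref{eq:div_curl_system_at_initial_time} you should also note that the circulation of $v_0+\rho_0e_2$ in the periodic direction vanishes (which follows from $\int_\T\gamma_0'\,dz_1=0$ and the decay of $v_0$ as $|x_2|\to\infty$); the paper is equally terse on this point. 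Also, the limit in \eqref{eq:initial_velocity_at_interface} is a general one-sided limit, not specifically a vertical one, but since the density is Hölder the Plemelj limits are uniform non-tangentially, so your computation covers it.
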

\begin{proof}
First of all one can check that the right-hand side of \eqref{eq:initial_velocity_away_from_interface} defines a locally integrable solution of \eqref{eq:div_curl_system_at_initial_time} with exponential decay as $\abs{x_2}\rightarrow\infty$. Thus standard elliptic estimates imply that this is the only solution with these properties. 

In order to compute the one-sided limits we write
\[
K(z)=\frac{1}{2\pi}\frac{z^\perp}{\abs{z}^2}\eta(z_1)+K_{reg}(z),
\]
where $\eta:\T\times\R$ is a smooth periodic cutoff function with $\eta(z_1)=1$ for $\abs{z_1}\leq 1$ and $\eta(z_1)=0$ for $\abs{z_1}\geq 2$, and the regular part $K_{reg}:\T\times\R\rightarrow\R^2$,
\[
K_{reg}(z):=K(z)-\frac{1}{2\pi}\frac{z^\perp}{\abs{z}^2}\eta(z_1)
\]
is smooth. In fact $K_{reg}$ is harmonic where $\eta(z_1)=1$. Furthermore, using complex notation we write $z^\perp/\abs{z}^2=(1/(iz))^*$ where $z^*$ denotes complex conjugation.

Then denoting by $v_{0,reg}$ the contribution from the regular part $K_{reg}$ we have 
\begin{align*}
    v_0(y)-v_{0,reg}(y)&=\left(\frac{1}{2\pi i}\int_{\T}\frac{2\gamma_0'(z_1)}{y-(z_1+i\gamma_0(z_1))}\:dz_1\right)^*
    \\&=-\left(\frac{1}{2\pi i}\int_{\Gamma_0}\frac{1}{\xi-y}\frac{2\gamma_0'(\xi_1)}{1+i\gamma_0'(\xi_1)}\:d\xi\right)^*
\end{align*}
for $y\notin \Gamma_0$. Now taking one sided limits $y\rightarrow x\in\Gamma_0$ expression \eqref{eq:initial_velocity_at_interface} follows from the Sokhotski-Plemelj formula, see \cite{Muskhelishvili}.
\end{proof} 

Formulas \eqref{eq:initial_velocity_at_interface} show that the initial velocity field is still continuous across the interface in the normal direction. Therefore the (not normalized) normal velocity at the interface $s_0:\T\rightarrow\R$, 
\begin{align}\begin{split}\label{eq:initial_normal_speed_sec5}
    s_0(x_1)&:=v_0(x_1,\gamma_0(x_1))\cdot\begin{pmatrix}-\gamma_0'(x_1)\\1\end{pmatrix}
    \\&=p.v.\int_{\T}K\begin{pmatrix}
    x_1-z_1\\\gamma_0(x_1)-\gamma_0(z_1)
    \end{pmatrix}2\gamma_0'(z_1)\:dz_1\cdot\begin{pmatrix}-\gamma_0'(x_1)\\1\end{pmatrix}
    \end{split}
\end{align}
is well-defined. It will play an important role in our further analysis as it dictates the motion of Lagrangian particles at the interface to first order when ignoring the Burgers' term $\partial_{x_2}(\rho^2)$.

\subsection{Rescaling and level set function}\label{sec:ansatz_1}
We now transform problem \eqref{eq:subequation}, \eqref{eq:initial_data} in terms of level sets. The reformulation here is understood on a formal level. We will solve the derived fixed point problem in Section \ref{sec:solving} and aposteriori justify the transformations in Section \ref{sec:justification}.

The starting point is the following ansatz for $\rho$ capturing the effect of the Burger's part described in Section \ref{sec:flat_interface}. 
Assume that there exists $f:[0,T)\times\T\times\R\rightarrow\R$ sufficiently regular with
\begin{equation}\label{eq:initial_data_for_f}
f(0,y)=\gamma_0(y_1)
\end{equation}
and such that for every $t\in(0,T)$, $y_1\in\T$ the map $\R\rightarrow\R$, $y_2\mapsto ty_2+f(t,y_1,y_2)$ is a monotone diffeomorphism.

Then each of the transformations
$X_t:\T\times\R\rightarrow\T\times\R$, $t\in(0,T)$,
\[
X_t(y)=\begin{pmatrix}y_1\\ty_2+f(t,y)\end{pmatrix}
\] 
is a diffeomorphism as well. 

We now seek to find a solution of \eqref{eq:subequation}, \eqref{eq:initial_data} on $[0,T)$ having the property that
\begin{align}\label{eq:ansatz}
\rho(t,X_t(y))=\phi_0(y_2)=\begin{cases}
+1,&y_2\geq 2,\\
\frac{1}{2}y_2,&y_2\in (-2,+2),\\
-1,&y_2\leq -2.
\end{cases}
\end{align}

For $t>0$ we compute
\begin{align}
DX_t(y)&=\begin{pmatrix}
1&0\\
\partial_{y_1}f(t,y)&t+\partial_{y_2}f(t,y)
\end{pmatrix},\label{eq:formuala_DX}\\ 
DX_t(y)^{-1}&=\begin{pmatrix}
1&0\\
\frac{-\partial_{y_1}f(t,y)}{t+\partial_{y_2}f(t,y)}&\frac{1}{t+\partial_{y_2}f(t,y)}
\end{pmatrix},\label{eq:formula_DX_inverse}\\
\nabla\rho(t,X_t(y))&=\frac{1}{2(t+\partial_{y_2}f(t,y))}\begin{pmatrix}
-\partial_{y_1}f(t,y)\\1
\end{pmatrix}\mathbbm{1}_{(-2,2)}(y_2)\label{eq:formula_gradient_rho},
\end{align}
such that the first equation of \eqref{eq:subequation}, when written in non divergence form, under the ansatz \eqref{eq:ansatz} is equivalent to 
\begin{align*}
0=\mathbbm{1}_{(-2,2)}(y_2)\begin{pmatrix}
-\partial_{y_1}f(t,y)\\
1
\end{pmatrix}\cdot\left(\begin{pmatrix}0\\y_2+\partial_tf(t,y)-2\phi_0(y_2))\end{pmatrix}-v(t,X_t(y))\right).
\end{align*}
Since  $2\phi_0(y_2)=\mathbbm{1}_{(-2,2)}(y_2)=y_2$,
expanding the above equation leads to 
\begin{align}\label{eq:equation_for_f}
\partial_tf(t,y)=v(t,y_1,ty_2+f(t,y))\cdot \begin{pmatrix}
-\partial_{y_1}f(t,y)\\1
\end{pmatrix}
\end{align}
for  $(t,y_1,y_2)\in(0,T)\times \T\times(-2,2)$.

Note that in view of \eqref{eq:formula_gradient_rho} the velocity field in \eqref{eq:equation_for_f} is always considered in directions normal to the level sets of $\rho$.

\subsection{Transformation of the velocity field}\label{sec:transformation_v}
For $t>0$ we have that $v(t,\cdot)$ (in all reasonable scenarios) is given by the Biot-Savart law \eqref{eq:biot_savart}, cf. Section \ref{sec:velocity}.

Applying the transformation $X_t(y)$ we compute the velocity field $v(t,y_1,ty_2+f(t,y))=v(t,X_t(y))$ occuring in \eqref{eq:equation_for_f}. First of all formulas \eqref{eq:formuala_DX} and \eqref{eq:formula_gradient_rho} imply
\begin{align*}
v(t,X_t(y))&=-\int_{\T\times\R}K(X_t(y)-z)\partial_{x_1}\rho(t,z)\:dz\\
&=-\int_{\T\times\R}K(X_t(y)-X_t(z))\partial_{x_1}\rho(t,X_t(z))\det DX_t(z)\:dz\\
&=\frac{1}{2}\int_{-2}^2\int_{\T}K(X_t(y)-X_t(z))\partial_{y_1}f(t,z)\:dz_1\:dz_2.
\end{align*}
Next we compute the full right-hand side of \eqref{eq:equation_for_f} and exploit the fact that the velocity field $v(t,X_t(y))$ is only needed in normal directions. More precisely, for $z\neq y$ we have
\begin{align*}
\partial_{y_1}f(t,z)K(&X_t(y)-X_t(z))\cdot\begin{pmatrix}
-\partial_{y_1}f(t,y)\\1
\end{pmatrix}\\
&=\partial_{y_1}f(t,z)\nabla G(X_t(y)-X_t(z))\cdot\begin{pmatrix}
1\\\partial_{y_1}f(t,y)
\end{pmatrix}\\
&=\partial_{y_1}f(t,y)\nabla G(X_t(y)-X_t(z))\cdot\begin{pmatrix}
1\\\partial_{y_1}f(t,z)
\end{pmatrix}\\
&\hspace{30pt}-\partial_1G(X_t(y)-X_t(z))(\partial_{y_1}f(t,y)-\partial_{y_1}f(t,z))\\
&=-\partial_{y_1}f(t,y)\frac{d}{dz_1}\big(G(X_t(y)-X_t(z))\big)\\
&\hspace{30pt}-K_2(X_t(y)-X_t(z))(\partial_{y_1}f(t,y)-\partial_{y_1}f(t,z)).
\end{align*}
Thus after integration we obtain an additional cancelation in the convolution, i.e. there holds
\begin{align}
\begin{split}\label{eq:cancelation}
v(t,&X_t(y))\cdot\begin{pmatrix}
-\partial_{y_1}f(t,y)\\1
\end{pmatrix}\\
&\phantom{=}=-\frac{1}{2}\int_{-2}^2\int_{\T}K_2(X_t(y)-X_t(z))(\partial_{y_1}f(t,y)-\partial_{y_1}f(t,z))\:dz_1\:dz_2.
\end{split}
\end{align}

\subsection{\texorpdfstring{Equation for $f$}{Equation for f}}\label{sec:ansatz3}

Combining equation \eqref{eq:cancelation} with \eqref{eq:equation_for_f} we see that \eqref{eq:subequation} can after our ansatz be written in the closed form
\begin{align}\label{eq:closed_equation_for_f}
\partial_tf(t,y)=-\frac{1}{2}\int_{-2}^2\int_{\T}K_2(\tilde{\Delta} X_t(y,z))(\partial_{y_1}f(t,y)-\partial_{y_1}f(t,z))\:dz_1\:dz_2
\end{align}
where
\begin{align*}
\tilde{\Delta} X_t(y,z):=X_t(y)-X_t(z)=\begin{pmatrix}y_1-z_1\\t(y_2-z_2)+f(t,y)-f(t,z)\end{pmatrix}
\end{align*}
also depends on $f$.
Via translation in $z_1$ equation \eqref{eq:closed_equation_for_f} can also be written as
\begin{align}\label{eq:closed_equation_for_f2}
\partial_tf(t,y)=-\frac{1}{2}\int_{-2}^2\int_{\T}K_2(\Delta X_t(y,z))\Delta \partial_{y_1}f_t(y,z)\:dz_1\:dz_2,
\end{align}
where we have abbreviated
\begin{gather}
\begin{gathered}\label{eq:abbreviation_of_deltas_sec4}
\Delta X_t(y,z):=\begin{pmatrix}z_1\\t(y_2-z_2)+f(t,y_1,y_2)-f_t(t,y_1-z_1,z_2)\end{pmatrix},\\
\Delta \partial_{y_1}f_t(y,z):=\partial_{y_1}f(t,y_1,y_2)-\partial_{y_1}f(t,y_1-z_1,z_2).
\end{gathered}
\end{gather}
The latter form turns out to be more convenient to work with.
\subsection{One more ansatz}
One important assumption in the above derivation is the invertibility of the maps $(X_t)_{t>0}$.
In order to guarantee that we further make the ansatz
\begin{equation}\label{eq:ansatz_for_f}
f(t,y)=\gamma_0(y_1)+ts_0(y_1)+\frac{1}{2}t^{1+\alpha} \eta(t,y),
\end{equation}
where $\alpha\in(0,1)$ and the functions $s_0:\T\rightarrow \R$, $\eta:(0,T)\times\T\times \R\rightarrow \R$ are sufficiently regular. In order to avoid potential confusion we emphasize that the function $\eta$ has nothing to do with an entropy as for example appearing in Definition \ref{def:entropy_solution}. Furthermore, we remark that the particular choice of $\alpha\in(0,1)$ is not important, see Section \ref{sec:conclusion_of_existence} for further discussion.

By this ansatz $f$ satisfies \eqref{eq:initial_data_for_f}, and the desired invertibility can assumed to be true for a small time interval (depending on $\norm{\partial_{y_2}\eta}_{L^\infty}$). 
Moreover, since at $t=0$ there holds $\partial_t f=s_0$, $\partial_{y_1}f=\gamma_0'$, passing formally to the limit on the right-hand side of \eqref{eq:closed_equation_for_f2} one sees that  $s_0$ necessarily is given by 
\begin{equation}\label{eq:expression_s_0}
-\frac{1}{2}\int_{-2}^2\int_{\T}K_2(\Delta X_0(y_1,z_1))\Delta\gamma_0'(y_1,z_1)\:dz_1\:dz_2,
\end{equation}
where
\begin{gather*}
    \Delta X_0(y_1,z_1):=\begin{pmatrix}
z_1\\\gamma_0(y_1)-\gamma_0(y_1-z_1)
\end{pmatrix},\\
\Delta\gamma_0'(y_1,z_1):=\gamma_0'(y_1)-\gamma_0'(y_1-z_1).
\end{gather*}

A quick computation similar to the one in Section \ref{sec:transformation_v} and comparison with \eqref{eq:initial_normal_speed_sec5} shows that the above expression is precisely the normal component of the initial velocity evaluated at $(y_1,\gamma_0(y_1))$, i.e. \eqref{eq:expression_s_0}. This shows that the function $s_0(y_1)$ is indeed forced to be the normal component of $v_0(y_1,\gamma_0(y_1))$. 

Finally we integrate \eqref{eq:closed_equation_for_f2} in time and use \eqref{eq:ansatz_for_f}, \eqref{eq:expression_s_0} in order to deduce that for $f$  to be a solution to  \eqref{eq:closed_equation_for_f2}  $\eta$ must be a solution of the following fixed point problem:
\begin{align}
\begin{split}\label{eq:fixed_point_equation_eta}
\eta(t,y)=-\frac{1}{t^{1+\alpha}}\int_0^t\int_{-2}^2\int_\T K_2(\Delta &X_s(y,z))\Delta \partial_{y_1}f_s(y,z)\\
&-K_2(\Delta X_0(y_1,z_1))\Delta\gamma_0'(y_1,z_1)\:dz_1\:dz_2\:ds.
\end{split}
\end{align}
Note that $\eta$ and $\partial_{y_1}\eta$ enter the right-hand side through \eqref{eq:abbreviation_of_deltas_sec4}, \eqref{eq:ansatz_for_f}.

\section{Existence of a solution for analytic graphs}\label{sec:solving}
Our goal is to show that for a real analytic $\gamma_0:\T\rightarrow\R$ there exists a unique local in time solution $\eta$ of problem \eqref{eq:fixed_point_equation_eta}. The proof relies on the following version of the abstract Cauchy-Kovalevskaya theorem based on the formulation of Nishida \cite{nishida}, see also \cite{nirenberg}.

In order to avoid confusion we emphasize that throughout Section \ref{sec:solving} every symbol $\rho,\rho',\bar{\rho},\rho_0$ denotes a positive constant referring to the size of the domain of analyticity. This is done in analogy to \cite{nishida}. At no time in Section \ref{sec:solving} the density function $\rho(t,x)$, which we seek to construct,  or the initial density $\rho_0(x)$ are mentioned.

\begin{theorem}\label{thm:abstract_CK} Let $(B_\rho)_{\rho\in (0,\rho_0)}$, $\rho_0>0$ be a scale of Banach spaces with $\norm{\cdot}_{\rho'}\leq \norm{\cdot}_{\rho}$ for $0<\rho'< \rho< \rho_0$ and consider the integral equation
\begin{equation}\label{eq:abstract_fixed_point}
u(t)=\frac{1}{a(t)}\int_0^tF(u(s),s)\:ds
\end{equation}
for a given continuous function $a:[0,\infty)\rightarrow\R$ with $a(t)>0$ for $t>0$.
If $F$ is such that
\begin{enumerate}[(i)]
\item\label{cond:F_well_defined} there exist $R>0$, $T>0$ such that for every $0< \rho'<\rho<\rho_0$ the map 
\[
\set{u\in B_\rho:\norm{u}_\rho<R}\times [0,T)\rightarrow B_{\rho'},\quad (u,t)\mapsto F(u,t)
\]
is well-defined and continuous,
\item\label{cond:F_lipschitz} there exists $b:[0,T)\rightarrow [0,\infty)$ continuous such that for any $0<\rho'<\rho<\rho_0$ and all $u,v\in B_\rho$, $\norm{u}_\rho<R$, $\norm{v}_\rho<R$, $t\in[0,T)$ there holds
\[
\norm{F(u,t)-F(v,t)}_{\rho'}\leq \frac{b(t)}{\rho-\rho'}\norm{u-v}_{\rho},
\]
\item\label{cond:F_0_well_defined} $F(0,\cdot)\in L^1(0,T;B_\rho)$ for any $\rho\in (0,\rho_0)$ and there exists $c:[0,T)\rightarrow[0,\infty)$ continuously differentiable on $(0,T)$, continuous on $[0,T)$ with $c(0)=0$, as well as $c'(t)>0$ for $t>0$, such that for all $\rho\in(0,\rho_0)$, $t\in (0,T)$ there holds
\[
\frac{1}{a(t)}\int_0^t\norm{F(0,s)}_{\rho}\:ds\leq \frac{c(t)}{\rho_0-\rho},
\]
\item\label{cond:relation_between_functions} for a constant $K>0$ the functions $a(t)$, $b(t)$, $c(t)$ appearing in \eqref{eq:abstract_fixed_point}, \ref{cond:F_lipschitz}, \ref{cond:F_0_well_defined} satisfy the relation
\begin{equation}\label{eq:relation_between_functions_abc}
\sup_{s\in(0,t)}\abs{\frac{b(s)c(s)}{c'(s)}}\leq K a(t)c(t),~t\in(0,T),
\end{equation}
\end{enumerate}
then there exists a constant $\bar{a}=\bar{a}(K,R)>0$ and a unique $u(t)$ which for any $\rho\in(0,\rho_0)$  
maps the interval $\set{t\in[0,T):c(t)<\bar{a}(\rho_0-\rho)}$ continuously into the $R$-ball of $B_\rho$. Moreover, $u$ satisfies \eqref{eq:abstract_fixed_point} and $\norm{u(t)}_\rho=O\left(\frac{c(t)}{\rho_0-\rho}\right)$ as $t\rightarrow 0$. In particular $u(0)=0$.
\end{theorem}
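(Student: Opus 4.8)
The plan is to follow the Nishida--Nirenberg scheme of successive approximations on the scale of Banach spaces, adapting it to the extra factor $1/a(t)$ in front of the time integral. First I would set up the iteration $u_0(t)\equiv 0$ and $u_{n+1}(t)=\frac{1}{a(t)}\int_0^t F(u_n(s),s)\:ds$, and look for an \emph{a priori} bound of the form $\norm{u_n(t)}_\rho\leq \frac{M c(t)}{\rho_0-\rho}$ valid on the time set $\set{t:c(t)<\bar a(\rho_0-\rho)}$, for a suitable constant $M=M(R)$ and a sufficiently small $\bar a=\bar a(K,R)$. The point of the bound is twofold: it keeps every iterate inside the $R$-ball of $B_\rho$ so that hypotheses \ref{cond:F_well_defined}--\ref{cond:F_lipschitz} apply, and it is the form that is stable under the iteration step. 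To propagate it one splits $F(u_n(s),s)=\big(F(u_n(s),s)-F(0,s)\big)+F(0,s)$; the second piece contributes $\frac{c(t)}{\rho_0-\rho}$ by \ref{cond:F_0_well_defined}, and for the first piece one uses the Cauchy-type estimate \ref{cond:F_lipschitz} with an intermediate radius. The standard trick is to interpolate: for $\rho'<\rho<\rho_0$ write $F(u_n(s),s)-F(0,s)$ evaluated in $B_{\rho'}$ using the radius $\rho(s)$ chosen between $\rho'$ and $\rho_0$ so that $\rho_0-\rho(s)$ is comparable to $(\rho_0-\rho')(1-c(s)/c(t))$ or a similar shrinking family, and then exploit that $\int_0^t \frac{b(s)c(s)}{(\rho_0-\rho(s))\,(\text{shrink factor})}\:ds$ is controlled by \eqref{eq:relation_between_functions_abc} after multiplying and dividing by $c'(s)$ and changing variables $s\mapsto c(s)$.

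Second, I would prove that $(u_n)$ is Cauchy in each $B_{\rho'}$ on the corresponding time set. Setting $w_n=u_{n+1}-u_n$, subtract the defining relations to get $w_{n+1}(t)=\frac{1}{a(t)}\int_0^t\big(F(u_{n}(s),s)-F(u_{n-1}(s),s)\big)\:ds$, apply \ref{cond:F_lipschitz}, and run the same interpolation-in-radius argument. Here one aims at $\norm{w_n(t)}_\rho\leq \frac{(\text{const})^n c(t)^{?}}{(\rho_0-\rho)^{?}}$-type geometric decay; the cleanest formulation is the Nishida one where one shows $\sup_t (\rho_0-\rho(t))\norm{w_n(t)}_{\rho(t)}$ decays geometrically once $\bar a$ is small relative to $1/K$. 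This yields a limit $u(t)$ which, by the uniform \emph{a priori} bound, lies in the $R$-ball of $B_\rho$ on $\set{t:c(t)<\bar a(\rho_0-\rho)}$ and maps that set continuously into $B_\rho$ (continuity passing to the limit from the continuity of each iterate, which in turn follows from \ref{cond:F_well_defined} and dominated convergence using $F(0,\cdot)\in L^1$). Passing to the limit in the iteration relation, using \ref{cond:F_lipschitz} again to exchange limit and integral, shows $u$ solves \eqref{eq:abstract_fixed_point}. The bound $\norm{u(t)}_\rho=O(c(t)/(\rho_0-\rho))$ as $t\to 0$ is immediate from the \emph{a priori} bound, and since $c(0)=0$ this forces $u(0)=0$.

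Third, for uniqueness: if $u,\tilde u$ are two solutions mapping the stated time sets continuously into the $R$-balls, set $w=u-\tilde u$ and observe $w(t)=\frac{1}{a(t)}\int_0^t\big(F(u(s),s)-F(\tilde u(s),s)\big)\:ds$. Running the interpolation estimate one more time on a slightly smaller time interval and a slightly smaller radius, and choosing $\bar a$ small, gives $\sup_{t} (\rho_0-\rho)\norm{w(t)}_\rho \leq \theta \sup_t(\rho_0-\rho')\norm{w(t)}_{\rho'}$ with $\theta<1$ — a contraction — hence $w\equiv 0$. (Alternatively a Gronwall-type argument in the variable $c(t)$ after absorbing the loss of derivative into the radius.) I would organize the three steps so that the \emph{same} weighted-sup quantity $N[w]:=\sup\{(\rho_0-\rho)\norm{w(t)}_\rho: c(t)<\bar a(\rho_0-\rho)\}$ appears in the convergence, existence and uniqueness arguments, which keeps the constants consistent.

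The main obstacle is the bookkeeping of the competition between the derivative loss encoded in $\frac{1}{\rho-\rho'}$ in \ref{cond:F_lipschitz} and the gain coming from integrating in time against the weight $1/a(t)$, as mediated by the structural hypothesis \eqref{eq:relation_between_functions_abc}. Concretely, the delicate point is to choose the shrinking family of radii $\rho(s)$ (equivalently the ``analyticity-radius loss budget'' allocated along the time integration) so that after the change of variables $s\mapsto c(s)$ the resulting integral is a convergent Beta-type integral whose value is bounded by $K a(t) c(t)$ times a universal constant; this is exactly where hypothesis \ref{cond:relation_between_functions} is consumed and where one must be careful that $c$ is $\cC^1$ with $c'>0$ on $(0,T)$ so the substitution is legitimate. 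The factor $1/a(t)$, which is absent from the classical Nishida statement, must be carried through every estimate rather than normalized away (one cannot in general absorb it into $F$ because it sits outside the integral), and checking that $\bar a$ can be chosen uniformly — depending only on $K$ and $R$, not on $\rho$ or $t$ — is the last thing to verify. I expect the rest (continuity of the limit, the $O(c(t)/(\rho_0-\rho))$ asymptotics, $u(0)=0$) to be routine once the weighted estimate is in place, so I would present the weighted \emph{a priori}/contraction estimate as a single lemma and then deduce existence, uniqueness and the quantitative bound as corollaries.
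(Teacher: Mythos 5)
Your proposal is correct and follows essentially the same route as the paper's proof, which is a direct adaptation of Nishida's successive-approximation scheme: the same iteration $u_{k+1}(t)=\frac{1}{a(t)}\int_0^tF(u_k(s),s)\,ds$, a Nishida-type weighted sup norm over the scale on the time sets $\set{t:c(t)<a_k(\rho_0-\rho)}$, the Lipschitz estimate applied with a time-dependent intermediate radius $\rho(\tau)$, and the change of variables $\xi=c(\tau)$ (using $c'>0$) to consume hypothesis \eqref{eq:relation_between_functions_abc}, yielding geometric decay $\lambda_k\leq 4Ka_0\lambda_{k-1}$ once $a_0$ is small relative to $1/K$. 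The remaining differences (your weight $(\rho_0-\rho)\norm{\cdot}_\rho$ versus the factor $\bigl(\tfrac{a_k(\rho_0-\rho)}{c(t)}-1\bigr)$, and the exact choice of shrinking radii) are bookkeeping variants of the same argument.
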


For the choices $a(t)=1$, $b(t)=c_1$, $c(t)=c_2t$ with some constants $c_1,c_2>0$ the above theorem is the abstract Cauchy-Kovalevskaya theorem in the formulation of Nishida \cite{nishida}. The proof of Theorem \ref{thm:abstract_CK} requires indeed just some minor modifications which are presented in Appendix \ref{sec:CK_proof}. For a related generalization of the abstract Cauchy-Kovalevskaya theorem see also \cite{reissig1,reissig2}.

We will apply Theorem \ref{thm:abstract_CK} in the following situation.
\begin{lemma}\label{lem:choices_of_abc} Let $c_1,c_2>0$ and $\alpha\in (0,1)$. There exist $T=T(\alpha),K=K(c_1,c_2)>0$ such that $a(t):=t^{1+\alpha}$, $b(t):=c_1t^{1+\alpha}\abs{\log t}$, $c(t):=c_2t^{1-\alpha}\abs{\log t}$ satisfy \eqref{eq:relation_between_functions_abc}.
\end{lemma}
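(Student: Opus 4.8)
The plan is to restrict attention to $t$ small enough that $|\log t|=\log(1/t)$ and that $c$ is strictly increasing, and then to reduce \eqref{eq:relation_between_functions_abc} to an elementary scalar inequality which can be verified with a constant independent of $\alpha$.

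First I would record that for $t\in(0,1)$ one has
$$c'(t)=c_2\,t^{-\alpha}\big((1-\alpha)\log(1/t)-1\big),$$
so that choosing $T\le e^{-2/(1-\alpha)}$ forces $(1-\alpha)\log(1/s)-1\ge 1$ for all $s\in(0,T)$. In particular $c'>0$ on $(0,T)$ (consistent with condition (iii) of Theorem \ref{thm:abstract_CK}), and the quantity inside the supremum in \eqref{eq:relation_between_functions_abc} is nonnegative, with
$$\frac{b(s)c(s)}{c'(s)}=c_1\,\frac{s^{2+\alpha}\big(\log(1/s)\big)^2}{(1-\alpha)\log(1/s)-1}=c_1\,s^2\log(1/s)\cdot\frac{s^{\alpha}\log(1/s)}{(1-\alpha)\log(1/s)-1},\qquad a(t)c(t)=c_2\,t^2\log(1/t).$$

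The heart of the matter is the bound, uniform in $\alpha\in(0,1)$,
$$\frac{s^{\alpha}\log(1/s)}{(1-\alpha)\log(1/s)-1}\le 2\qquad\text{for }s\in(0,T),\ T\le e^{-2/(1-\alpha)}.$$
I would prove this by the substitution $u:=(1-\alpha)\log(1/s)\ge 2$, $\beta:=1/(1-\alpha)\ge 1$, under which the left-hand side becomes $\beta\,\tfrac{u}{u-1}\,e^{-(\beta-1)u}$; estimating $\tfrac{u}{u-1}\le 2$ and $e^{-(\beta-1)u}\le e^{-2(\beta-1)}$, the claim reduces to $\beta e^{-2\beta}\le e^{-2}$ for $\beta\ge 1$, which holds because $\beta\mapsto\beta e^{-2\beta}$ is decreasing on $[1,\infty)$. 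This is the one genuinely delicate point: the factor $1/(1-\alpha)$, which blows up as $\alpha\to1$, is absorbed by the exponential smallness $e^{-2\alpha/(1-\alpha)}$ that the choice $T(\alpha)=e^{-2/(1-\alpha)}$ provides; everything else is bookkeeping.

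Finally I would combine the displays: for $s\in(0,T)$ this gives $b(s)c(s)/c'(s)\le 2c_1\,s^2\log(1/s)$, and since $T\le e^{-2}<e^{-1/2}$ the function $s\mapsto s^2\log(1/s)$ has positive derivative $s\big(2\log(1/s)-1\big)$ on $(0,T)$, hence is increasing there. Therefore $\sup_{s\in(0,t)}b(s)c(s)/c'(s)\le 2c_1\,t^2\log(1/t)=\tfrac{2c_1}{c_2}\,a(t)c(t)$ for every $t\in(0,T)$, which is exactly \eqref{eq:relation_between_functions_abc} with $T:=e^{-2/(1-\alpha)}$ and $K:=2c_1/c_2$; any smaller $T>0$ works just as well, which is convenient since in the application $T$ must also be compatible with conditions (i)--(iii) of Theorem \ref{thm:abstract_CK}.
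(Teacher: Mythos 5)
Your proof is correct and takes essentially the same route as the paper: both reduce the claim to the identity $\frac{b(s)c(s)}{c'(s)}=c_1\frac{s^{2+\alpha}\abs{\log s}^2}{(1-\alpha)\abs{\log s}-1}$, bound the $\alpha$-dependent factor by a fixed constant for small $s$, and conclude via the monotonicity of $s\mapsto s^2\abs{\log s}$, yielding $K$ a multiple of $c_1/c_2$. The only (immaterial) difference is how that factor is controlled: the paper imposes the extra smallness condition $t^\alpha\abs{\log t}\leq 1$ on $T$ to get the bound $1$, whereas your substitution $u=(1-\alpha)\log(1/s)$ gets the bound $2$ from $T\leq e^{-2/(1-\alpha)}$ alone.
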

\begin{proof}
Consider $T\in(0,1)$ such that
\begin{gather*}
(1-\alpha)\abs{\log t}\geq 2, \quad \abs{\log t}t^\alpha\leq 1,
\end{gather*}
for all $t\in(0,T)$. Then for $0<s<t<T$ there holds
\[
\frac{b(s)c(s)}{c'(s)}=c_1\frac{s^{2+\alpha}\abs{\log s}^2}{(1-\alpha)\abs{\log s}-1}\leq c_1s^{2}\abs{\log s}\leq c_1t^2\abs{\log t}=\frac{c_1}{c_2}a(t)c(t).
\]
Thus \eqref{eq:relation_between_functions_abc} holds true with $K:=c_1c_2^{-1}$.
\end{proof}

\subsection{Banach spaces}\label{sec:banach_spaces}
Set
\begin{equation*}
\Omega_0:=\T\times (-2,2),
\end{equation*}
as well as
\begin{gather*}
U_\rho:=\set{z\in\C:\abs{\im(z)}<\rho},\quad \Omega_\rho:=U_\rho\times (-2,2)
\end{gather*}
for $\rho>0$.

We define the space $B_\rho$ to consist of all continuous functions $\eta:\Omega_0\rightarrow \R$, $y\mapsto\eta(y)$ which satisfy
\begin{enumerate}[(i)]
\item for every $y_2\in(-2,2)$ the function $\eta(\cdot,y_2)$ extends to a holomorphic function $U_\rho\rightarrow\C$ which is again denoted by $\eta(\cdot,y_2)$,
\item the derivative $\partial_{y_2}\eta:\Omega_\rho\rightarrow\C$ exists, is uniformly continuous and $\partial_{y_2}\eta(\cdot,y_2)$ is holomorphic on $U_\rho$ for every $y_2\in(-2,2)$,
\item the norm 
\[
\norm{\eta}_{\rho}:=\norm{\eta}_{L^\infty(\Omega_\rho)}+\norm{\partial_{y_1}\eta}_{L^\infty(\Omega_\rho)}+\norm{\partial_{y_2}\eta}_{L^\infty(\Omega_\rho)}
\]
is finite.
\end{enumerate}
For clarification, the extension in (i) strictly speaking is the extension of the $2\pi$-periodic function $\eta(\cdot,y_2):\R\rightarrow \R$. The extension $U_\rho \rightarrow\C$, $y_1\mapsto \eta(y_1,y_2)$ therefore is periodic in the real part of $y_1$.  Moreover, $\partial_{y_1}\eta$ denotes the complex derivative in the first component, while $\partial_{y_2}\eta$ is the real partial derivative with respect to the second component. Although the two derivatives are of slightly different nature, we still use a gradient notation $\nabla_y\eta:=(\partial_{y_1}\eta,\partial_{y_2}\eta)^T$.

Clearly each $B_\rho$ is a Banach space and $B_{\rho}\subset B_{\rho'}$, $\norm{\cdot}_{\rho'}\leq \norm{\cdot}_{\rho}$ whenever $\rho'<\rho$. Moreover, for the introduced scale of spaces we have the following lemma, which is a direct consequence of Cauchy integral formula for analytic functions.
\begin{lemma}[Cauchy]\label{lem:cauchy} Let $0<\rho'<\rho$ and $\eta\in B_\rho$. Then for $j=1,2$ there holds
\begin{align*}
\norm{\partial_{y_1}\partial_{y_j}\eta}_{L^\infty(\Omega_{\rho'})}\leq \frac{C}{\rho-\rho'}\norm{\eta}_\rho
\end{align*}
for $C=(2\pi)^{-1}$.
\end{lemma}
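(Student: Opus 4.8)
The plan is a direct application of the Cauchy integral formula on the strips $U_\rho$, once one observes that every first-order $y_1$- or $y_2$-derivative of an element of $B_\rho$ is again holomorphic in the first variable and is controlled in $L^\infty(\Omega_\rho)$ by $\norm{\eta}_\rho$.

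Concretely, I would fix $j\in\{1,2\}$, $\eta\in B_\rho$ and $y_2\in(-2,2)$, and set $g:=\partial_{y_j}\eta(\cdot,y_2)\colon U_\rho\to\C$. For $j=1$ this function is holomorphic by property (i) in the definition of $B_\rho$, and for $j=2$ by property (ii); in either case $\norm{g}_{L^\infty(U_\rho)}\leq\norm{\partial_{y_j}\eta}_{L^\infty(\Omega_\rho)}\leq\norm{\eta}_\rho$. Now fix $y_1\in U_{\rho'}$. Since $\abs{\im y_1}<\rho'$, every closed disc $\overline{B_r(y_1)}$ with $r<\rho-\rho'$ is contained in $U_\rho$, so the Cauchy integral formula for the first derivative gives
\[
\partial_{y_1}\partial_{y_j}\eta(y_1,y_2)=g'(y_1)=\frac{1}{2\pi i}\int_{\partial B_r(y_1)}\frac{g(z)}{(z-y_1)^2}\:dz .
\]
Estimating the integrand by $r^{-2}\norm{g}_{L^\infty(U_\rho)}$ and the length of the contour by $2\pi r$ yields $\abs{\partial_{y_1}\partial_{y_j}\eta(y_1,y_2)}\leq r^{-1}\norm{\eta}_\rho$; letting $r\uparrow\rho-\rho'$ and then taking the supremum over $y_1\in U_{\rho'}$ and $y_2\in(-2,2)$ produces the asserted bound with a universal constant (a circle of radius just below $\rho-\rho'$ makes $C=1$ admissible, and the precise value is immaterial for the later applications).

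I do not expect any genuine obstacle here; the only points worth keeping straight are that one must differentiate the \emph{already differentiated} function $\partial_{y_j}\eta$ — so that the first-derivative Cauchy formula, which produces the $(\rho-\rho')^{-1}$ scaling, is the correct tool rather than a second-derivative formula with its $(\rho-\rho')^{-2}$ loss — and that for $j=2$ the holomorphy of $\partial_{y_2}\eta(\cdot,y_2)$ in the variable $y_1$ is exactly what property (ii) of $B_\rho$ supplies, since $\partial_{y_2}$ is otherwise only a real derivative.
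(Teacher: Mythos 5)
Your argument is correct and is exactly the approach the paper intends: the lemma is stated there as a direct consequence of the Cauchy integral formula, with no further detail given, and your reduction to the first-derivative Cauchy estimate for the holomorphic functions $\partial_{y_j}\eta(\cdot,y_2)$ on discs of radius $r<\rho-\rho'$ is the standard and correct implementation. The only discrepancy is the constant: your $C=1$ is what this contour estimate actually yields, whereas the paper's stated $C=(2\pi)^{-1}$ appears to be a slip (it cannot hold in general, e.g.\ for $\eta(y)=e^{iy_1}$ with $\rho-\rho'$ close to $1$), but as you note the precise value is irrelevant for every later application.
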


In particular $\partial_{y_1}\eta$ is, as $\eta$ itself, Lipschitz continuous on $\Omega_0$. This together with the assumed uniform continuity of $\partial_{y_2}\eta$ implies
\begin{lemma}\label{lem:real_C1}
Let $\rho>0$ and $\eta\in B_\rho$. Then $\eta:\Omega_0\rightarrow \R$ extends to $\cC^1(\overline{\Omega}_0)$ and $\eta(\cdot,y_2)$, $\partial_{y_2}\eta(\cdot,y_2)$ are real analytic for each $y_2\in[-2,2]$.
\end{lemma}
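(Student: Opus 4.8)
The plan is to combine the interior regularity in the $y_1$-direction furnished by Lemma~\ref{lem:cauchy} with the assumed uniform continuity of $\partial_{y_2}\eta$ in order to extend $\eta$ together with its gradient continuously up to the boundary $\T\times\{\pm 2\}$, and then to realise the holomorphic traces of $\eta$ and of $\partial_{y_2}\eta$ at $y_2=\pm 2$ as locally uniform limits of the interior holomorphic slices. For the first step, fix $0<\rho'<\rho$. By Lemma~\ref{lem:cauchy} the derivatives $\partial_{y_1}^2\eta$ and $\partial_{y_1}\partial_{y_2}\eta$ are bounded on $\Omega_{\rho'}$ by $\frac{C}{\rho-\rho'}\norm{\eta}_\rho$. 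Differentiating the Cauchy integral representation of $\partial_{y_2}\eta(\cdot,y_2)$ with respect to $y_2$ (which is legitimate because $\partial_{y_2}\eta$ is bounded on $\Omega_\rho$ and holomorphic in $y_1$) yields $\partial_{y_2}\partial_{y_1}\eta=\partial_{y_1}\partial_{y_2}\eta$ on $\Omega_{\rho'}$, so $\partial_{y_1}\eta$ has bounded first-order derivatives on $\Omega_{\rho'}$ and is therefore Lipschitz continuous there, in particular on $\Omega_0$. Since in addition $\partial_{y_1}\eta$ and $\partial_{y_2}\eta$ belong to $L^\infty(\Omega_\rho)$, the function $\eta$ itself is Lipschitz continuous on $\Omega_0$ as well.

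Consequently $\eta$, $\partial_{y_1}\eta$ and, by hypothesis (ii) in the definition of $B_\rho$, $\partial_{y_2}\eta$ are all uniformly continuous on $\Omega_0\subset\Omega_\rho$ and thus extend continuously to $\overline{\Omega}_0=\T\times[-2,2]$. To see that the extended $\eta$ is genuinely $\cC^1$ there with gradient given by the extensions of $(\partial_{y_1}\eta,\partial_{y_2}\eta)$, I would use that for $-2<y_2'<y_2<2$ one has $\eta(y_1,y_2)-\eta(y_1,y_2')=\int_{y_2'}^{y_2}\partial_{y_2}\eta(y_1,s)\,ds$, and let $y_2\to 2^-$ and $y_2'\to -2^+$; the resulting identity valid up to the boundary shows that the $y_2$-partial derivative of the extension at $y_2=\pm 2$ equals the continuous extension of $\partial_{y_2}\eta$. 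As the variable $y_1$ ranges over $\T$ there is no boundary to discuss in that direction, so the joint continuity of $\nabla_y\eta$ up to $\overline{\Omega}_0$ gives $\eta\in\cC^1(\overline{\Omega}_0)$.

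It remains to prove real analyticity of $\eta(\cdot,y_2)$ and $\partial_{y_2}\eta(\cdot,y_2)$ for every $y_2\in[-2,2]$. For $y_2\in(-2,2)$ this is immediate from properties (i) and (ii) of $B_\rho$, since these functions are holomorphic on $U_\rho$, which contains $\R$. For $y_2=\pm 2$, the bound $\norm{\partial_{y_2}\eta}_{L^\infty(\Omega_\rho)}<\infty$ gives $\abs{\eta(y_1,y_2)-\eta(y_1,y_2')}\le\norm{\partial_{y_2}\eta}_{L^\infty(\Omega_\rho)}\abs{y_2-y_2'}$ uniformly in $y_1\in U_\rho$, so $\eta(\cdot,y_2)$ is uniformly Cauchy on $U_\rho$ as $y_2\to\pm 2$; likewise the uniform continuity of $\partial_{y_2}\eta$ on $\Omega_\rho$ makes $\partial_{y_2}\eta(\cdot,y_2)$ uniformly Cauchy on $U_\rho$ as $y_2\to\pm 2$. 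By the Weierstrass convergence theorem the two limits are holomorphic on $U_\rho$, and restricted to $\R$ they coincide with the continuous boundary extensions found in the previous step; hence $\eta(\cdot,\pm 2)$ and $\partial_{y_2}\eta(\cdot,\pm 2)$ are real analytic on $\R$. The only slightly delicate points are the commutation $\partial_{y_1}\partial_{y_2}\eta=\partial_{y_2}\partial_{y_1}\eta$ and the matching of the boundary traces with these holomorphic limits, both of which are settled by differentiating under the integral sign in Cauchy's formula together with the uniform bounds encoded in $\norm{\cdot}_\rho$; I do not anticipate a genuine obstacle here.
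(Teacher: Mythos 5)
Your argument is correct and follows exactly the route the paper indicates (the paper gives no detailed proof, only the remark that Lipschitz continuity of $\eta$ and $\partial_{y_1}\eta$ from Lemma \ref{lem:cauchy}, together with the assumed uniform continuity of $\partial_{y_2}\eta$, yields the claim): you extend $\eta$ and $\nabla_y\eta$ continuously to $\overline{\Omega}_0$ via uniform continuity, verify $\cC^1$ by the fundamental theorem of calculus in $y_2$, and obtain holomorphy of the boundary slices as uniform limits of the interior holomorphic slices. The only cosmetic issue is the phrase about ``differentiating the Cauchy integral representation of $\partial_{y_2}\eta$ with respect to $y_2$''; what is meant (and what works) is differentiating the Cauchy representation of $\partial_{y_1}\eta$ in $y_2$ under the integral sign, which is the commutation $\partial_{y_2}\partial_{y_1}\eta=\partial_{y_1}\partial_{y_2}\eta$ the paper itself records.
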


Also note that $\partial_{y_2}\partial_{y_1}\eta(y)=\partial_{y_1}\partial_{y_2}\eta(y)$ for $\eta\in B_\rho$, $y\in\Omega_\rho$ for instance by means of Cauchy's integral formula.

\subsection{Notation}\label{sec:notation}

From now on we fix $\alpha\in(0,1)$ and a real analytic initial datum $\gamma_0:\T\rightarrow\R$. Clearly $\gamma_0$ can be extended to a holomorphic function defined on $U_{2\rho_0}$ for some $\rho_0>0$ small. 

Hence all (complex) derivatives are uniformly bounded on $U_{\rho_0}$, e.g. there exist a constant $C_0>0$ such that
\begin{equation}\label{eq:bounds_for_derivative_of_gamma_0}
\norm{\gamma_0'}_{L^\infty(U_{\rho_0})}\leq C_0.
\end{equation}

More generally, in the following $C_0>0$ always denotes a constant depending solely on the $L^\infty(U_{\rho_0})$-norm of a fixed finite amount of derivatives of $\gamma_0$. (A detailed look at the proof reveals that the first five derivatives of $\gamma_0$ are sufficient. However, the precise number is not important.)
In contrast $C>0$ usually denotes a constant not depending on $\gamma_0$. Both constants typically change from line to line. Also we point out that distinguishing $C_0$ from $C$ is not essential for the proof of Theorem \ref{thm:main}.

For a pair $a=(a_1,a_2)\in \R\times \C$ we denote
\begin{align}\label{eq:definition_of_complex_norm}
\abs{a}_*:=\left(\abs{a_1}^2+\abs{a_2}^2\right)^{\frac{1}{2}}=\left(a_1^2+a_2a_2^*\right)^{\frac{1}{2}}.
\end{align}
Moreover, whenever we write $\abs{z_1}$ for $z_1\in\T$ we mean the absolute value of the unique representant of $z_1$ in $[-\pi,\pi)$. In particular we will also use $\abs{a}_*$ for pairs $a\in\T\times\C$.

For any function $g:\Omega_{\rho_0}\rightarrow\C^n$ or $h:U_{\rho_0}\rightarrow \C^n$ we abbreviate
\begin{align}\begin{split}\label{eq:Delta_notation}
\Delta g(y,z)&:=g(y)-g(y_1-z_1,z_2),\\
\Delta h(y_1,z_1)&:=h(y_1)-h(y_1-z_1)
\end{split}
\end{align}
for $y=(y_1,y_2)\in\Omega_{\rho_0}$, $z=(z_1,z_2)\in\Omega_0$ and $y_1\in U_{\rho_0}$, $z_1\in\T$ resp.. In proofs we will most of the time omit the points $(y,z)$ and simply write $\Delta g$, $\Delta h$.

Furthermore, for $t\geq 0$ and $\eta\in B_{\rho_0}$ we define $f_t^\eta:\Omega_{\rho_0}\rightarrow\C$, $X_t^\eta:\Omega_{\rho_0}\rightarrow\C^2$,
\begin{align}\label{eq:definition_of_feta_Xeta}
f_t^\eta(y):=\gamma_0(y_1)+ts_0(y_1)+\frac{1}{2}t^{1+\alpha}\eta(y),\quad X_t^\eta(y):=\begin{pmatrix}
y_1\\ty_2+f_t^\eta(y)
\end{pmatrix}.
\end{align}
The function $s_0:U_{\rho_0}\rightarrow\C$ will be introduced in Lemma \ref{lem:initial_speed} below. At time $t=0$ we simply write $X_0(y_1)$ instead of $X_0^\eta(y)$. The second component of $X_t^\eta(y)$ is denoted by $X_{t,2}^\eta(y)$. There is no need to distinguish the first component, since it is just given by $y_1$.

\subsection{Preliminary lemmas}\label{sec:preliminary_lemmas}
In order to define the function $F$ as a complex extension of the functional appearing in \eqref{eq:fixed_point_equation_eta} we need some preparation.

Recall that the second component $K_2$ of the Biot-Savart kernel on $\T\times \R$ is given by
\begin{align*}
K_2(a)=K_2(a_1,a_2)=\frac{1}{4\pi}\frac{\sin(a_1)}{\cosh(a_2)-\cos(a_1)}.
\end{align*}
Thus for fixed $a_1\in\T$ the canonical extension of $K_2(a_1,\cdot)$ to $a_2\in\C$ is holomorphic on the open set $\set{a_2\in\C:\cosh(a_2)-\cos(a_1)\neq 0}$. We define 
\[
\cU:=\set{a\in\T\times\C:\cosh(a_2)-\cos(a_1)\neq 0}.
\]

\begin{lemma}\label{lem:good_sets_for_K_2} Let $\kappa\in(0,1/2)$. The sets 
\[
\cU^\kappa:=\set{(a_1,a_2)\in \T\times \C:\abs{\im(a_2)}<\kappa (\abs{a_1}+\abs{\re(a_2)}),~\abs{\im(a_2)}<\pi/2}
\]
are subsets of $\cU$ with $\partial\cU^\kappa\cap\partial\cU=\set{0}$. Moreover, there exists a constant $C>0$ depending on $\kappa$ such that for all $a\in\cU^\kappa$, $j=0,1,2$ there holds 
\begin{align}\label{eq:boundK}
\abs{\partial_{a_2}^jK_2(a)}\leq  C\abs{a}_*^{-(1+j)}.
\end{align}
\end{lemma}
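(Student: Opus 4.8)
The plan is to establish the three assertions in sequence: (a) $\cU^\kappa\subset\cU$, (b) $\partial\cU^\kappa\cap\partial\cU=\{0\}$, and (c) the pointwise bounds \eqref{eq:boundK}. For (a), note that for $a=(a_1,a_2)\in\T\times\C$ one has $\cosh(a_2)-\cos(a_1)=0$ precisely when $a_2=i a_1+2\pi i k$ or $a_2=-ia_1+2\pi i k$ for some $k\in\Z$ (writing $\cosh(a_2)=\cos(ia_2)$ and using $\cos u=\cos v\iff u=\pm v+2\pi\Z$). So the bad set, intersected with $|\im(a_2)|<\pi/2$, forces $\im(a_2)=\pm\re(a_1)\pmod{2\pi}$ and $\re(a_2)=\mp\im(a_1)=0$ since $a_1\in\T$ is real, i.e. $|\im(a_2)|=|a_1|$ and $\re(a_2)=0$. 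But on $\cU^\kappa$ we have $|\im(a_2)|<\kappa(|a_1|+|\re(a_2)|)=\kappa|a_1|<|a_1|$ (as $\kappa<1/2<1$), a contradiction unless $a_1=0$, in which case $|\im(a_2)|<0$ is impossible. Hence $\cU^\kappa\cap(\text{bad set})=\emptyset$, giving (a). For (b), the same computation shows that the only point where the defining inequality $|\im(a_2)|<\kappa(|a_1|+|\re(a_2)|)$ can touch the bad locus $\{|\im a_2|=|a_1|,\ \re a_2=0\}$ is the origin; one checks $0\in\partial\cU^\kappa$ (approach along $\im(a_2)\to 0$) and $0\in\partial\cU$, while away from $0$ the open cone condition keeps a positive distance from $\partial\cU$ by a compactness/homogeneity argument on the sphere $|a|_*=1$.

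For the estimate (c), the main point is the lower bound
\[
|\cosh(a_2)-\cos(a_1)|\geq c\,|a|_*^2\qquad\text{for }a\in\cU^\kappa,
\]
with $c>0$ depending only on $\kappa$. I would prove this by the following two-regime argument. Near the origin, Taylor expansion gives $\cosh(a_2)-\cos(a_1)=\tfrac12(a_1^2+a_2^2)+O(|a|_*^4)$, so it suffices to bound $|a_1^2+a_2^2|$ from below. Writing $a_2=u+iv$ with $|v|<\kappa(|a_1|+|u|)$, one computes $\re(a_1^2+a_2^2)=a_1^2+u^2-v^2$ and checks $a_1^2+u^2-v^2\geq a_1^2+u^2-\kappa^2(|a_1|+|u|)^2\geq (1-2\kappa^2)(a_1^2+u^2)\geq \tfrac12(1-2\kappa^2)|a|_*^2$ using $(|a_1|+|u|)^2\le 2(a_1^2+u^2)$ and $v^2\le\kappa^2(|a_1|+|u|)^2$. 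Since $\kappa<1/2$ gives $1-2\kappa^2>1/2>0$, this yields $|a_1^2+a_2^2|\geq c|a|_*^2$ for small $|a|_*$, and then absorbing the $O(|a|_*^4)$ error for $|a|_*$ below a threshold $\delta=\delta(\kappa)$. Away from the origin, on the compact set $\{a\in\overline{\cU^\kappa}: \delta\leq|a|_*,\ |a_1|\leq\pi,\ |\im a_2|\leq\pi/2\}$ the continuous function $|\cosh(a_2)-\cos(a_1)|$ is strictly positive by (a) and $\partial\cU^\kappa\cap\partial\cU=\{0\}$, hence bounded below by a positive constant, while $|a|_*^2$ is bounded above there; combining the two regimes gives the claimed quadratic lower bound on all of $\cU^\kappa$ (after trivially handling $|a_1|$ ranging over a full period by periodicity). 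Finally, the numerator and its $a_2$-derivatives of $K_2(a)=\tfrac1{4\pi}\sin(a_1)/(\cosh a_2-\cos a_1)$ satisfy $|\sin(a_1)|\leq|a_1|\leq|a|_*$ and, for the $j$-th $a_2$-derivative, each differentiation either produces a $\sinh(a_2)$ factor (bounded by $C|a|_*$ on the bounded region, using $|\sinh a_2|\le C|a_2|$ there) in the numerator or lowers the power of the denominator by one while introducing another bounded factor; together with the quadratic lower bound this gives $|\partial_{a_2}^j K_2(a)|\leq C|a|_*\cdot|a|_*^{-2(j+1)}\cdot|a|_*^{\,j}=C|a|_*^{-(1+j)}$ after bookkeeping of the powers, which is \eqref{eq:boundK}.

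The main obstacle I anticipate is making the lower bound $|\cosh(a_2)-\cos(a_1)|\gtrsim|a|_*^2$ rigorous uniformly over the \emph{unbounded-in-$\re(a_2)$} cone $\cU^\kappa$: the Taylor argument only controls a neighborhood of the origin, and for large $|\re(a_2)|$ one must instead use $|\cosh(a_2)|\sim\tfrac12 e^{|\re a_2|}$ growing exponentially while $|\cos a_1|\le\cosh(\pi/2)$ stays bounded, so $|\cosh a_2-\cos a_1|\gtrsim e^{|\re a_2|}\gtrsim|a|_*^2$ easily there — but the transition region (moderate $|\re a_2|$, all allowed $|\im a_2|$, all $a_1\in\T$) needs the compactness argument, and one has to be careful that the closure $\overline{\cU^\kappa}$ still meets $\partial\cU$ only at $0$ so that the infimum of $|\cosh a_2-\cos a_1|$ over the compact transition set is genuinely positive. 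Once this quadratic lower bound is in hand, the derivative estimates are a routine quotient-rule induction and the cone geometry (parts (a), (b)) is elementary.
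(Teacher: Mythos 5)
Your proposal is correct and follows essentially the same route as the paper: identify the zero set of $\cosh(a_2)-\cos(a_1)$ to get $\cU^\kappa\subset\cU$ and $\partial\cU^\kappa\cap\partial\cU=\set{0}$, then prove the lower bound $\abs{\cosh(a_2)-\cos(a_1)}\gtrsim\abs{a}_*^2$ near the origin via the cone condition $v^2\leq 2\kappa^2(a_1^2+u^2)$ and Taylor expansion, and conclude the kernel and derivative bounds by the quotient rule. The only cosmetic differences are that you bound $\abs{a_1^2+a_2^2}$ from below by its real part (the paper computes the full modulus, which is where $\kappa<1/2$ enters), and that you spell out the large-$\abs{\re(a_2)}$ and compactness regimes that the paper leaves implicit.
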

\begin{proof} Let $a\in \overline{\cU}^\kappa\cap \partial\cU$, $a_2=u+iv$. Then 
\begin{align*}
0=\cosh(a_2)-\cos(a_1)=\cosh(u)\cos(v)-\cos(a_1)+i\sinh(u)\sin(v)
\end{align*}
implies $v=0$ and thus $\cosh(u)=\cos(a_1)$, which is only possible for $a_1=u=0$; or $u=0$ and $\cos(v)=\cos(a_1)$, which in the closure of $\cU^\kappa$ is again only possible for $a_1=u=0$. Thus $\cU^\kappa\subset\cU$ and $\partial\cU^\kappa\cap \partial\cU=\set{0}$.

For the second part we split the analysis into three regions: $a\in\cU^\kappa$, $\abs{a}_*$ close to $0$; $a\in\cU^\kappa$, $\abs{a}_*$ large and the remaining subset of $\cU^\kappa$.

Let us start with $a\in\cU^\kappa$, $\abs{a}_*$ close to $0$. Writing again $a=(a_1,u+iv)$ and using that $v^2\leq 2\kappa^2(a_1^2+u^2)$ there holds
\begin{align*}
\abs{a_1^2+a_2^2}&=\left(a_1^4+2a_1^2(u^2-v^2)+(u^2-v^2)^2+4u^2v^2\right)^{\frac{1}{2}}\\
&\geq \left((1-4\kappa^2)a_1^4+2a_1^2(1-2\kappa^2)u^2+(u^2+v^2)^2\right)^{\frac{1}{2}}\\
&\geq (1-4\kappa^2)(a_1^4+\abs{a_2}^4)^{\frac{1}{2}}\geq \frac{1-4\kappa^2}{2}\abs{a}_*^2.
\end{align*}

Then for $a\in\cU^\kappa$, $a$ small it follows that 
\[
\abs{K_2(a)}\leq \frac{1}{2\pi}\frac{\abs{a_1}+O(\abs{a_1}^3)}{\abs{a_1^2+a_2^2}-O(\abs{a_1}^4)-O(\abs{a_2}^4)}\leq \frac{1}{\pi}\frac{\abs{a}_*+O(\abs{a}_*^3)}{(1-4\kappa^2)\abs{a}_*^2-O(\abs{a}_*^4)}\leq\frac{C}{\abs{a}_*}.
\]
Doing the same for higher order derivatives it follows that there exists $\varepsilon>0$ such that \eqref{eq:boundK} holds true for all $a\in\cU^\kappa$ with $\abs{a}_*<\varepsilon$. We fix such an $\varepsilon$.

Next let us consider the opposite regime $a=u+iv\in\cU^\kappa$, $\abs{a}_*$ large. Note that this necessarily means that $\abs{u}$ has to be large, since $\abs{a_1}\leq \pi$, $\abs{v}\leq \pi/2$ by definition of $\cU^\kappa$. We then estimate 
\begin{align*}
    \abs{\cosh(a_2)-\cos(a_1)}&\geq \abs{\cosh(u)\cos(v)+i\sinh(u)\sin(v)}-1\\
    &=\left(\cosh^2(u)-\sin^2(v)\right)^{\frac{1}{2}}-1\\
    &\geq \left(u^2/2-1\right)^{\frac{1}{2}}-1.
\end{align*}
Consequently one can find constants $C>0$ and $R>0$ such that \eqref{eq:boundK} with $j=0$ holds true for all $a\in\cU^\kappa$ with $\abs{a}_*>R$.  Again this procedure can be extended to higher order derivatives giving an $R$ as above but with \eqref{eq:boundK} valid for $j=0,1,2$ for $\abs{a}_*>R$. Let us also fix such an $R$.

Let now $a$ be in the remaining set, i.e. $a\in\cU^\kappa$ with $\varepsilon\leq \abs{a}_*\leq R$. The closure of this set is compact and bounded away from $\partial\cU$ where the denominator of $K_2$ vanishes. Therefore the existence of $C>0$ such that \eqref{eq:boundK} holds true also on this set just by continuity of $\partial^j_{a_2}K_2$, $j=0,1,2$. This finishes the proof of the lemma.
\end{proof}
\begin{lemma}\label{lem:initial_speed}
Let $\rho_0>0$ be chosen such that $\gamma_0$ extends holomorphically to $U_{2\rho_0}$ with
\begin{align}\label{eq:choice_of_rho0}
4\norm{\im(\gamma_0^\prime)}_{L^\infty(U_{\rho_0})}<1.
\end{align}
Then the complex extension of the initial normal velocity $s_0:\Omega_0\rightarrow\R$,
\[
s_0(y_1):=-2\int_{\T}K_2(\Delta X_0(y_1,z_1))\Delta\gamma_0'(y_1,z_1)\:dz_1
\]
is holomorphic on $U_{\rho_0}$. Moreover, the $L^\infty(U_{\rho_0})$-norm of any finite amount of derivatives of $s_0$ can be bounded by $C_0$. In particular, all derivatives of $s_0$ are given by differentiation under the integral.
\end{lemma}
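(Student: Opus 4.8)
\medskip

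The plan is to verify holomorphy of $s_0$ on $U_{\rho_0}$ by checking that the integrand $z_1\mapsto K_2(\Delta X_0(y_1,z_1))\Delta\gamma_0'(y_1,z_1)$ is, for each fixed $z_1\in\T$, a holomorphic function of $y_1\in U_{\rho_0}$, and that the integral converges absolutely and uniformly on compact subsets of $U_{\rho_0}$, so that Morera's theorem (equivalently, differentiation under the integral sign, justified by dominated convergence) applies. The one delicate point is that $K_2$ has a singularity and is only holomorphic on the set $\cU$ from Lemma \ref{lem:good_sets_for_K_2}, so the first and main step is to show that for $y_1\in U_{\rho_0}$ and $z_1\in\T$ the point
\[
\Delta X_0(y_1,z_1)=\begin{pmatrix}z_1\\\gamma_0(y_1)-\gamma_0(y_1-z_1)\end{pmatrix}
\]
lies in $\cU^{\kappa}$ for some fixed $\kappa\in(0,1/2)$, where the bound \eqref{eq:boundK} is available.

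\medskip

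For that step I would write $y_1=u+iw$ with $\abs{w}<\rho_0$ and estimate the second component using the fundamental theorem of calculus along the segment from $y_1-z_1$ to $y_1$: $\gamma_0(y_1)-\gamma_0(y_1-z_1)=\int_0^1\gamma_0'(y_1-z_1+\tau z_1)\,z_1\,d\tau$, so that its imaginary part is controlled by $\abs{z_1}$ times $\norm{\im(\gamma_0')}_{L^\infty(U_{\rho_0})}$ plus $\abs{z_1}$ times $\norm{\re(\gamma_0')}_{L^\infty(U_{\rho_0})}$ times the small parameter governing $w$; more simply, since $\abs{\re(\Delta X_{0,2})}\ge 0$ and the first coordinate of $\Delta X_0$ is exactly $z_1$, the condition $\abs{\im(\Delta X_{0,2})}<\kappa(\abs{z_1}+\abs{\re(\Delta X_{0,2})})$ follows once $\abs{\im(\gamma_0(y_1)-\gamma_0(y_1-z_1))}\le \kappa\abs{z_1}$, and the latter holds for $\kappa$ close to $2\norm{\im(\gamma_0')}_{L^\infty(U_{\rho_0})}$ by the hypothesis \eqref{eq:choice_of_rho0} (shrinking $\rho_0$ if necessary so that the imaginary part of the increment of $\gamma_0$ picked up from $w$ is absorbed, and so that $\abs{\im(\Delta X_{0,2})}<\pi/2$). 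This places $\Delta X_0(y_1,z_1)\in\cU^\kappa\subset\cU$, hence $K_2(\Delta X_0(y_1,\cdot))$ is defined and, for fixed $z_1$, holomorphic in $y_1$; and $\Delta\gamma_0'$ is holomorphic in $y_1$ as well.

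\medskip

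For the integrability and uniform convergence, combine the bound $\abs{K_2(\Delta X_0(y_1,z_1))}\le C\abs{\Delta X_0(y_1,z_1)}_*^{-1}\le C\abs{z_1}^{-1}$ from \eqref{eq:boundK} with $\abs{\Delta\gamma_0'(y_1,z_1)}\le \norm{\gamma_0''}_{L^\infty(U_{\rho_0})}\abs{z_1}\le C_0\abs{z_1}$ (again by the fundamental theorem of calculus), so the integrand is bounded uniformly in $y_1\in U_{\rho_0}$ by $C_0$, and in particular the integral over the compact torus $\T$ converges absolutely and locally uniformly. Morera plus Fubini then gives holomorphy of $s_0$ on $U_{\rho_0}$. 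The statement about derivatives follows by the same argument applied to $\partial_{y_1}^k$ of the integrand: each such derivative is again holomorphic in $y_1$ on $U_{\rho_0}$ and, after distributing the $y_1$-derivatives onto $K_2$ and $\Delta\gamma_0'$ and using the chain rule together with the bounds $\abs{\partial_{a_2}^jK_2(a)}\le C\abs{a}_*^{-(1+j)}$ and $\abs{\Delta\gamma_0^{(m)}(y_1,z_1)}\le C_0\abs{z_1}$ (the key cancellation always producing one factor of $\abs{z_1}$ to beat the worst $\abs{z_1}^{-1}$ from $K_2$ or its derivatives, with the extra $\abs{z_1}$-powers from higher $\partial_{a_2}$-derivatives of $K_2$ compensated by corresponding differences of $\gamma_0$), one obtains a bound of the integrand by $C_0$, uniformly on $U_{\rho_0}$. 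Hence differentiation under the integral is legitimate and every finite-order derivative of $s_0$ is bounded by $C_0$ on $U_{\rho_0}$.

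\medskip

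The main obstacle is the first step: one must be careful that the locus $\{\Delta X_0(y_1,z_1):y_1\in U_{\rho_0},z_1\in\T\}$ genuinely stays inside a good cone $\cU^\kappa$ and does not approach the bad set $\partial\cU\setminus\{0\}$; this is exactly what the normalization \eqref{eq:choice_of_rho0} is designed to ensure, and verifying it cleanly (including the $\abs{\im(a_2)}<\pi/2$ constraint, which forces a smallness condition on $\rho_0$ and on $\norm{\gamma_0}$ that one arranges by shrinking $\rho_0$) is the crux. Everything else is a routine application of Morera's theorem together with the kernel estimates of Lemma \ref{lem:good_sets_for_K_2} and the elementary $\abs{z_1}$-gain from differencing $\gamma_0'$.
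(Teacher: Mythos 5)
Your proposal is correct and follows essentially the same route as the paper's proof: one uses \eqref{eq:choice_of_rho0} to show $\abs{\im(\gamma_0(y_1)-\gamma_0(y_1-z_1))}<\tfrac14\abs{z_1}$, which places $\Delta X_0(y_1,z_1)$ in a cone $\cU^\kappa$ where Lemma \ref{lem:good_sets_for_K_2} applies, and then bounds the integrand and each of its $y_1$-derivatives by $C_0$ via the cancellation $\abs{\Delta\gamma_0^{(m)}}\leq C_0\abs{z_1}\leq C_0\abs{\Delta X_0}_*$ against the kernel singularities, so that differentiation under the integral is justified. (Minor remarks: no shrinking of $\rho_0$ is needed for the $\pi/2$ constraint since $\abs{z_1}/4\leq\pi/4$, and the fundamental theorem of calculus along the horizontal segment gives the constant $\norm{\im(\gamma_0')}_{L^\infty(U_{\rho_0})}$ directly rather than twice that value.)
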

\begin{proof}
By \eqref{eq:choice_of_rho0} one estimates
\[
\abs{\im(\gamma_0(y_1)-\gamma_0(y_1-z_1))}<\frac{1}{4}\abs{z_1}
\]
for $z_1\in[-\pi,\pi)$, $z_1\neq 0$, $y_1\in U_{\rho_0}$.
Thus by Lemma \ref{lem:good_sets_for_K_2} the composition of $K_2$ with $y_1\mapsto\Delta X_0(y_1,z_1)$ is holomorphic for every $z_1\neq 0$. Moreover, again by Lemma \ref{lem:good_sets_for_K_2} for such $z_1$ there holds
\[
\abs{\partial_{y_1}\big(K_2(\Delta X_0(y_1,z_1))\big)}\leq C\left(\frac{\abs{\Delta\gamma_0'(y_1,z_1)}^2}{\abs{\Delta X_0(y_1,z_1)}_*^2}+\frac{\abs{\Delta\gamma_0''(y_1,z_1)}}{\abs{\Delta X_0(y_1,z_1)}_*}\right)\leq C_0.
\] It follows that $s_0$ is holomorphic and that $\norm{s_0'}_{L^\infty(U_{\rho_0})}\leq C_0$. The same can be shown for higher order derivatives.
\end{proof}

The following two lemmas provide careful estimates needed for the compensation of various terms appearing in the definition of our nonlinear map $F$ below. We are also careful with the uniform integrability as we need to be able to neglect what happens in some small sets.
\begin{lemma}\label{lem:f_maps_into_good_set}
Let $\rho_0>0$ be as in Lemma \ref{lem:initial_speed} and let $R>0$. There exists $T=T(R,C_0,\alpha)\in(0,1)$ such that for all $\eta\in B_{\rho}$, $\norm{\eta}_{\rho}<R$, $\rho\in(0,\rho_0)$ and $t\in[0,T)$, $y\in \Omega_{\rho}$, $z\in\Omega_0$ there holds $\Delta X_t^\eta(y,z)\in\overline{\cU^{3/8}}$ and
\begin{gather}
t\abs{y_2-z_2}\leq C_0\abs{\Delta X^\eta_t(y,z)}_*.\label{eq:estimate_for_ty2-z2}
\end{gather}

\end{lemma}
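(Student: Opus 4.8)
The plan is to reduce the statement to the behaviour of the map $y_2\mapsto t y_2 + f_t^\eta(y)$ near $t=0$ and to the structure of the sets $\cU^\kappa$ from Lemma~\ref{lem:good_sets_for_K_2}. Write $\Delta X_t^\eta(y,z) = (z_1,\, t(y_2-z_2) + \Delta f_t^\eta(y,z))$ after the translation in $z_1$, where by \eqref{eq:definition_of_feta_Xeta} and the $\Delta$-notation
\[
\Delta f_t^\eta(y,z) = \Delta\gamma_0(y_1,z_1) + t\,\Delta s_0(y_1,z_1) + \tfrac{1}{2}t^{1+\alpha}\Delta\eta(y,z).
\]
First I would estimate the imaginary part of the second component. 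Since $\gamma_0$ is holomorphic on $U_{2\rho_0}$ with $4\norm{\im(\gamma_0')}_{L^\infty(U_{\rho_0})}<1$ (the choice \eqref{eq:choice_of_rho0}), the mean value inequality along the segment from $y_1-z_1$ to $y_1$ gives $\abs{\im(\Delta\gamma_0(y_1,z_1))} \le \norm{\im(\gamma_0')}_{L^\infty(U_{\rho_0})}\abs{z_1} < \tfrac14\abs{z_1}$ for $y_1\in U_{\rho_0}$. Similarly, by Lemma~\ref{lem:initial_speed} one has $\abs{\im(\Delta s_0(y_1,z_1))}\le C_0\abs{z_1}$, and since $\eta\in B_\rho$ is real on $\Omega_0$ with $\norm{\eta}_\rho<R$, Lemma~\ref{lem:cauchy} bounds $\partial_{y_1}\eta$ on $U_\rho$ so that $\abs{\im(\Delta\eta(y,z))}\le C\abs{z_1}\norm{\eta}_\rho\le CR\abs{z_1}$ (using $\abs{\im(\eta(y_1,y_2))}\le \abs{y_1-\re y_1}\,\norm{\partial_{y_1}\eta}$ type bounds). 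Collecting these, for $t\in[0,T)$ with $T$ small depending on $R,C_0,\alpha$,
\[
\abs{\im(X_{t,2}^\eta(y)-X_{t,2}^\eta(y_1-z_1,z_2))} \le \Big(\tfrac14 + C_0 t + C R t^{1+\alpha}\Big)\abs{z_1} \le \tfrac38\abs{z_1},
\]
and also $\abs{\im(X_{t,2}^\eta)}<\pi/2$ for $T$ small since everything is bounded. As $\abs{z_1} = \abs{a_1} \le \abs{a_1}+\abs{\re(a_2)}$ this is exactly the defining inequality of $\overline{\cU^{3/8}}$, giving the first claim.

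Next, for \eqref{eq:estimate_for_ty2-z2}, I would bound $t\abs{y_2-z_2}$ from above by a constant times $\abs{\Delta X_t^\eta(y,z)}_* = \big(\abs{z_1}^2 + \abs{t(y_2-z_2)+\Delta f_t^\eta(y,z)}^2\big)^{1/2}$. Split into two regimes according to the size of $\abs{z_1}$ relative to $t\abs{y_2-z_2}$. If $\abs{z_1}\ge \tfrac12 t\abs{y_2-z_2}$ then trivially $t\abs{y_2-z_2}\le 2\abs{z_1}\le 2\abs{\Delta X_t^\eta}_*$. If $\abs{z_1}< \tfrac12 t\abs{y_2-z_2}$, then since $\abs{\Delta\gamma_0(y_1,z_1)},\abs{t\Delta s_0},\abs{t^{1+\alpha}\Delta\eta}$ are all $\le C_0 t^{?}\abs{z_1}$ — more precisely $\abs{\Delta f_t^\eta(y,z)}\le (C_0 + C_0 t + C R t^{1+\alpha})\abs{z_1}\le C_0\abs{z_1} \le \tfrac12 C_0 t\abs{y_2-z_2}$ for $T$ small — the real part of the second component satisfies $\abs{t(y_2-z_2)+\Delta f_t^\eta} \ge t\abs{y_2-z_2} - \abs{\Delta f_t^\eta}\ge (1-\tfrac12 C_0\cdot\text{(small)})\, t\abs{y_2-z_2}\ge \tfrac12 t\abs{y_2-z_2}$ once $T$ is chosen small enough; hence $\abs{\Delta X_t^\eta}_*\ge \tfrac12 t\abs{y_2-z_2}$. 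In either case \eqref{eq:estimate_for_ty2-z2} holds with a constant of the form $C_0$ (absorbing the universal constants), which is what is claimed.

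The main subtlety — and the reason the statement is phrased with $\overline{\cU^{3/8}}$ rather than an open set — is uniformity near the diagonal $z_1\to 0$ together with $t\to 0$: there $\Delta\gamma_0(y_1,z_1)\approx \gamma_0'(y_1)z_1$ is of the \emph{same} order as $z_1$, not smaller, so the $\tfrac14$ from \eqref{eq:choice_of_rho0} genuinely must be the governing constant and the perturbations $t\Delta s_0$, $t^{1+\alpha}\Delta\eta$ must be absorbed into the remaining slack $\tfrac38-\tfrac14=\tfrac18$ by shrinking $T$; one has to be careful that the constants multiplying these perturbations depend only on $R,C_0,\alpha$ and not on $\rho\in(0,\rho_0)$, which is exactly where Lemma~\ref{lem:cauchy} (Cauchy estimates giving $\rho$-independent bounds on first derivatives of $\eta$ from $\norm{\eta}_\rho$) is used. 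The bound $\abs{z_1}$ for the imaginary part, rather than $\abs{z_1}+\abs{\re(a_2)}$, is what lets us land in $\cU^{3/8}$; one should double-check that the harmless enlargement from $\abs{z_1}$ to $\abs{a_1}+\abs{\re(a_2)}$ in the definition of $\cU^\kappa$ is used, not fought against. Everything else is routine, and the proof should close by fixing $T=T(R,C_0,\alpha)$ as the minimum of the finitely many smallness thresholds appearing above.
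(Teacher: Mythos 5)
There is a genuine gap, and it occurs at the single point where this lemma is actually delicate: your treatment of $\Delta\eta(y,z)=\eta(y_1,y_2)-\eta(y_1-z_1,z_2)$. This is \emph{not} a difference in the first variable alone; it also contains the shift $y_2\to z_2$, so the best available bound is $\abs{\Delta\eta}\leq \norm{\partial_{y_1}\eta}_{L^\infty(\Omega_\rho)}\abs{z_1}+\norm{\partial_{y_2}\eta}_{L^\infty(\Omega_\rho)}\abs{y_2-z_2}\leq R(\abs{z_1}+\abs{y_2-z_2})$, and the second term is not controlled by $\abs{z_1}$ (take $z_1=0$, $y_2\neq z_2$). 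Consequently your claimed estimate $\abs{\im(\Delta X^\eta_{t,2})}\leq\tfrac38\abs{z_1}$ is false: for $z_1=0$, $\im(y_1)\neq 0$ and $y_2\neq z_2$ the left-hand side is in general nonzero while the right-hand side vanishes. Likewise the bound $\abs{\Delta f_t^\eta}\leq C_0\abs{z_1}$ that you use in your second regime is false for the same reason. The correct argument must exploit the prefactor $\tfrac12 t^{1+\alpha}$ in front of $\eta$: the offending contribution is $\tfrac12 t^{1+\alpha}R\abs{y_2-z_2}=\tfrac12 t^{\alpha}R\cdot t\abs{y_2-z_2}$, i.e.\ a \emph{small multiple of the main term} $t\abs{y_2-z_2}$ once $T^\alpha R\leq 1$. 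The paper therefore proves \eqref{eq:estimate_for_ty2-z2} \emph{first} (absorbing $\tfrac12 t^{1+\alpha}R\abs{y_2-z_2}$ into $t\abs{y_2-z_2}$ to get $\tfrac{t}{2}\abs{y_2-z_2}\leq\abs{\re(\Delta X^\eta_{t,2})}+C_0\abs{z_1}$), and only then bounds the imaginary part by $(\tfrac14+O(T))\abs{z_1}+T^\alpha R\,t\abs{y_2-z_2}\leq(\tfrac14+O(T^\alpha))\abs{z_1}+T^\alpha R\abs{\re(\Delta X^\eta_{t,2})}$. This is exactly where the $\abs{\re(a_2)}$ term in the definition of $\cU^\kappa$ is indispensable — you correctly flag that it should be "used, not fought against," but your proof never uses it, and without it the membership claim cannot be closed.

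Two smaller points. First, Lemma~\ref{lem:cauchy} plays no role here and could not: it bounds \emph{second} derivatives on a strictly smaller strip $\Omega_{\rho'}$ at the cost of $(\rho-\rho')^{-1}$, whereas the first derivatives $\partial_{y_1}\eta$, $\partial_{y_2}\eta$ on $\Omega_\rho$ itself are already controlled by $\norm{\eta}_\rho<R$ by definition of the norm — invoking Cauchy here would actually destroy the uniformity in $\rho$ you are trying to preserve. Second, in your dichotomy for \eqref{eq:estimate_for_ty2-z2} the threshold $\abs{z_1}\geq\tfrac12 t\abs{y_2-z_2}$ is incompatible with $\abs{\Delta f_t^\eta}\leq C_0\abs{z_1}$ unless $C_0<1$; the split would have to be at $\abs{z_1}\geq \epsilon\, t\abs{y_2-z_2}$ with $\epsilon\sim C_0^{-1}$. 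That slip is cosmetic and fixable, but the missing $\abs{y_2-z_2}$-contribution to $\Delta\eta$ is not: it is the actual content of the lemma.
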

\begin{proof}
First of all chose $T\in(0,1)$ with $T^\alpha R\leq 1$. Then, omiting the $(y,z)$ dependence in notation, cf. Sec. \ref{sec:notation}, there holds
\begin{align*}
\frac{t}{2}\abs{y_2-z_2}&\leq t\abs{y_2-z_2}-\frac{1}{2}t^{1+\alpha}R\abs{y_2-z_2}\\
&\leq \abs{t(y_2-z_2)+\frac{1}{2}t^{1+\alpha}\re(\eta(y)-\eta(y_1,z_2))}\\
&= \abs{\re\left(\Delta X^\eta_{t,2}-\Delta\gamma_0-t\Delta s_0-\frac{1}{2}t^{1+\alpha}(\eta(y_1,z_2)-\eta(y_1-z_1,z_2))\right)}\\
&\leq \abs{\re(\Delta X^\eta_{t,2})}+(C_0(1+T)+T^{1+\alpha}R)\abs{z_1}\leq\abs{\re(\Delta X^\eta_{t,2})}+C_0\abs{z_1}
\end{align*}
This implies \eqref{eq:estimate_for_ty2-z2}.

In order to see that $\Delta X^\eta_t\in \overline{\cU^{3/8}}$ we use \eqref{eq:choice_of_rho0}, as well as the just shown inequality to deduce
\begin{align*}
\abs{\im(\Delta X^\eta_{t,2})}&=\abs{\im\left( \Delta \gamma_0+t\Delta s_0+\frac{1}{2}t^{1+\alpha}\Delta\eta \right)}\\
&\leq \left(\frac{1}{4}+TC_0+T^{1+\alpha}R\right)\abs{z_1}+T^\alpha R t\abs{y_2-z_2}\\
&\leq \left(\frac{1}{4}+T(C_0+1)\right)\abs{z_1}+T^\alpha R\abs{\re(\Delta X^\eta_{t,2})}+T^\alpha R C_0\abs{z_1}.
\end{align*}
Thus by chosing $T>0$ even smaller the desired inequality
\begin{align*}
\abs{\im(\Delta X^\eta_{t,2})}\leq \frac{3}{8}\left(\abs{z_1}+\abs{\Delta\re(X_{t,2}^\eta)}\right)
\end{align*}
holds true.
\end{proof}

\begin{lemma}\label{lem:estimate_on_integral}
Let $\rho_0,R,T>0$ be as in Lemma \ref{lem:f_maps_into_good_set}. For $\eta\in B_{\rho}$, $\norm{\eta}_{\rho}<R$, $\rho\in(0,\rho_0)$ and $y\in\Omega_{\rho}$, $t\in(0,T)$ there holds
\begin{equation}
\int_{\Omega_0}\frac{1}{\abs{\Delta X^\eta_t(y,z)}_*}\:dz\leq C_0\abs{\log t}.
\end{equation}
The integrability of $\abs{\Delta X^\eta_t(y,\cdot)}_*^{-1}$ is uniform with respect to $y\in\Omega_0$ and with respect to $t$ considered on any interval of the form $[t_0,T)$ with $t_0>0$.
\end{lemma}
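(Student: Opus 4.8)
The plan is to derive the bound from two elementary pointwise lower bounds on $\abs{\Delta X_t^\eta(y,z)}_*$ and then reduce to a kernel-free integral. First, since the first component of $\Delta X_t^\eta(y,z)$ is exactly $z_1$, we have $\abs{z_1}\le \abs{\Delta X_t^\eta(y,z)}_*$. Second, Lemma~\ref{lem:f_maps_into_good_set}, specifically inequality \eqref{eq:estimate_for_ty2-z2}, already provides $t\abs{y_2-z_2}\le C_0\abs{\Delta X_t^\eta(y,z)}_*$ for all $\eta\in B_\rho$ with $\norm{\eta}_\rho<R$, all $\rho\in(0,\rho_0)$, $t\in[0,T)$, $y\in\Omega_\rho$ and $z\in\Omega_0$. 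Averaging the two gives $\abs{\Delta X_t^\eta(y,z)}_*\ge c_0(\abs{z_1}+t\abs{y_2-z_2})$ with $c_0=c_0(C_0)>0$, so it suffices to estimate $\int_{\Omega_0}(\abs{z_1}+t\abs{y_2-z_2})^{-1}\,dz$.

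Next I would compute this integral by Fubini. Integrating first in $z_1\in(-\pi,\pi)$, for fixed $z_2$ and with $a:=t\abs{y_2-z_2}$ one has $\int_{-\pi}^{\pi}(\abs{z_1}+a)^{-1}\,dz_1=2\log(1+\pi/a)$, and after shrinking $T$ so that $a\le 4t<1$ throughout, this is at most $C(1+\abs{\log t}+\abs{\log\abs{y_2-z_2}})$. Integrating in $z_2\in(-2,2)$ and using that $\int_{-2}^{2}\abs{\log\abs{y_2-z_2}}\,dz_2$ is bounded by a universal constant uniformly for $y_2\in(-2,2)$, we obtain $\int_{\Omega_0}(\abs{z_1}+t\abs{y_2-z_2})^{-1}\,dz\le C(1+\abs{\log t})$. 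Shrinking $T$ once more so that $\abs{\log t}\ge 1$ on $(0,T)$ absorbs the additive constant and yields the stated bound $C_0\abs{\log t}$; the uniformity in $y$ is automatic since every quantity depends on $y$ only through $y_2\in(-2,2)$.

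For the equi-integrability statement on intervals $[t_0,T)$, I would again use $\abs{\Delta X_t^\eta(y,z)}_*\ge c_0(\abs{z_1}+t_0\abs{y_2-z_2})$ together with the elementary inequality $\abs{z_1}+t_0\abs{y_2-z_2}\ge 2\sqrt{t_0}\,\abs{z_1}^{1/2}\abs{y_2-z_2}^{1/2}$, so that $\abs{\Delta X_t^\eta(y,z)}_*^{-1}\le C(t_0)\abs{z_1}^{-1/2}\abs{y_2-z_2}^{-1/2}$ pointwise, uniformly in $y\in\Omega_0$. Since the right-hand side belongs to $L^{1+\varepsilon}(\Omega_0)$ for small $\varepsilon>0$ with norm bounded uniformly in $y$ (because $y_2$ ranges only over the bounded interval $(-2,2)$), Hölder's inequality gives $\int_A\abs{\Delta X_t^\eta(y,\cdot)}_*^{-1}\,dz\le C(t_0)\abs{A}^{\varepsilon/(1+\varepsilon)}$ for every measurable $A\subset\Omega_0$, which is exactly the asserted uniform integrability in $y$ and in $t$ on $[t_0,T)$.

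I do not expect a serious obstacle here: the content is simply that the genuine diagonal singularity of the kernel at $z=(0,y_2)$, where $\Delta X_t^\eta$ vanishes, becomes of the integrable type $(\abs{z_1}+t\abs{y_2-z_2})^{-1}$ after the rescaling, at the cost of one factor of $\abs{\log t}$. The only care needed is to invoke \eqref{eq:estimate_for_ty2-z2} in order to produce the $t\abs{y_2-z_2}$ term in the denominator, to keep the constants $C_0$ (depending on $\gamma_0$) and $C$ (absolute), together with $c_0=c_0(C_0)$ and $C(t_0)$, carefully apart, and to shrink $T$ finitely many times so that both $a<1$ and $\abs{\log t}\ge 1$ hold on $(0,T)$.
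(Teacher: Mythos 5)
Your proposal is correct and follows essentially the same route as the paper: the pointwise lower bound $\abs{\Delta X_t^\eta(y,z)}_*\geq c_0(C_0)\,(\abs{z_1}+t\abs{y_2-z_2})$ obtained from the trivial first-component bound together with \eqref{eq:estimate_for_ty2-z2}, followed by an explicit estimate of $\int_{\Omega_0}(\abs{z_1}+t\abs{y_2-z_2})^{-1}\,dz=O(\abs{\log t})$. The only cosmetic differences are that the paper evaluates this integral in closed form while you split off $\abs{\log\abs{y_2-z_2}}$ and integrate it, and for the uniform integrability the paper simply exhibits the single $L^1$ majorant $(\abs{z_1}+t_0\abs{z_2})^{-1}$ on $\T\times(-4,4)$, whereas you reach the same conclusion via AM--GM and an $L^{1+\varepsilon}$ bound with H\"older; both are valid, and your shrinking of $T$ can be avoided by absorbing the bounded contribution of $t$ away from $0$ into the constant, exactly as the paper implicitly does.
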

\begin{proof}
In view of \eqref{eq:estimate_for_ty2-z2} we have
\begin{align*}
\int_{\Omega_0}&\frac{1}{\abs{\Delta X^\eta_t}_*}\:dz\leq C_0\int_{\Omega_0}\frac{1}{\abs{z_1}+t\abs{y_2-z_2}}\:dz=C_0\int_{\T}\int_{y_2-2}^{y_2+2}\frac{1}{\abs{z_1}+t\abs{z_2}}\:dz_2\:dz_1\\
&\leq C_0\int_0^\pi\int_0^{4}\frac{1}{z_1+tz_2}\:dz_2\:dz_1=C_0\left(\frac{\pi}{t}\log\left(1+\frac{4t}{\pi}\right)+4\log\left(\frac{\pi}{4t}+1\right)\right),
\end{align*}
which is of order $\abs{\log t}$. Note here that $t<1$, since $T$ is assumed to be less than $1$.

The uniform integrability follows from $\frac{1}{\abs{z_1}+t_0\abs{z_2}}\in L^1(\T\times (-4,4))$.
\end{proof}

\subsection{\texorpdfstring{Definition of $F$}{Definition of F}}\label{sec:def_of_F}
Let us fix $\rho_0>0$ as in Lemma \ref{lem:initial_speed}. Take $R=1$ and a corresponding $T\in(0,1)$ from Lemma \ref{lem:f_maps_into_good_set}. 

We define the application $(\eta,t)\mapsto F(\eta,t)=F_t(\eta)$ by setting
\begin{equation*}\label{eq:definition_of_F}
F_t(\eta)(y):=-\int_{\Omega_0}K_2(\Delta X_t^\eta(y,z))\Delta\partial_{y_1}f_t^\eta(y,z)-K_2(\Delta X_0(y_1,z_1))\Delta\gamma_0'(y_1,z_1)\:dz
\end{equation*}
for $t>0$ and $F_0(\eta)(y)=0$.
\begin{lemma}\label{lem:F_welldef} $F$ when seen as a map
\begin{align*}
\set{\eta\in B_\rho:\norm{\eta}_{\rho}<1}\times [0,T)\rightarrow B_{\rho'}
\end{align*}
is well-defined for all $0<\rho'<\rho<\rho_0$. Moreover, for $\eta\in B_\rho$, $\norm{\eta}_{\rho}<1$ the map $[0,T)\rightarrow B_{\rho'}$, $t\mapsto F_t(\eta)$ is continuous.
\end{lemma}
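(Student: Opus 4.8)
The plan is to verify, step by step, the defining properties (i)--(iii) of the space $B_{\rho'}$ for the function $y\mapsto F_t(\eta)(y)$, with the key analytic input being Lemmas \ref{lem:good_sets_for_K_2}, \ref{lem:f_maps_into_good_set} and \ref{lem:estimate_on_integral}. First I would observe that by Lemma \ref{lem:f_maps_into_good_set}, for $t\in[0,T)$, $\eta\in B_\rho$ with $\norm{\eta}_\rho<1$, $y\in\Omega_\rho$ and $z\in\Omega_0$ one has $\Delta X_t^\eta(y,z)\in\overline{\cU^{3/8}}$, so that $K_2(\Delta X_t^\eta(y,z))$ is well-defined and, by the bound \eqref{eq:boundK} together with \eqref{eq:estimate_for_ty2-z2}, satisfies $\abs{K_2(\Delta X_t^\eta(y,z))}\leq C\abs{\Delta X_t^\eta(y,z)}_*^{-1}$. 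Since $\Delta\partial_{y_1}f_t^\eta(y,z)=\Delta\gamma_0'(y_1,z_1)+t\Delta s_0'(y_1,z_1)+\tfrac12 t^{1+\alpha}\Delta\partial_{y_1}\eta(y,z)$ and all three increments are bounded by $C_0\abs{z_1}\leq C_0\abs{\Delta X_t^\eta(y,z)}_*$ using \eqref{eq:bounds_for_derivative_of_gamma_0}, Lemma \ref{lem:initial_speed}, the Lipschitz bound on $\partial_{y_1}\eta$ from Lemma \ref{lem:cauchy}/\ref{lem:real_C1}, and the bound $\abs{z_1}\le\abs{\Delta X_t^\eta(y,z)}_*$, the full integrand is bounded by $C_0\abs{\Delta X_t^\eta(y,z)}_*^{-1}$, which is integrable in $z$ over $\Omega_0$ with integral $\le C_0\abs{\log t}$ by Lemma \ref{lem:estimate_on_integral}; the same bound handles the subtracted term $K_2(\Delta X_0)\Delta\gamma_0'$. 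Hence $F_t(\eta)(y)$ is a finite complex number for each $y\in\Omega_\rho$, and in particular for $y\in\Omega_0$, with $\norm{F_t(\eta)}_{L^\infty(\Omega_{\rho'})}\leq C_0\abs{\log t}$ (this is even true on $\Omega_\rho$, but $\Omega_{\rho'}$ is all that is needed and gives room for the derivative estimates).

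Next I would establish holomorphy in $y_1$. For fixed $y_2\in(-2,2)$ and $z$ with $z_1\neq 0$, the map $y_1\mapsto K_2(\Delta X_t^\eta(y_1,y_2,z))\Delta\partial_{y_1}f_t^\eta(y_1,y_2,z)$ is holomorphic on $U_\rho$ because $y_1\mapsto \Delta X_t^\eta$ is holomorphic with values in the open set where $K_2$ is holomorphic (again Lemma \ref{lem:f_maps_into_good_set}, noting that for $y\in\Omega_{\rho'}$ with $\rho'<\rho$ the point $\Delta X_t^\eta$ stays in the interior $\cU^{3/8}$, away from $\partial\cU$), and because $\eta$, $\partial_{y_1}\eta$, $\gamma_0$, $s_0$ are holomorphic in $y_1$. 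A Morera/dominated-convergence argument, using the uniform (in $y_1$ on compact subsets of $U_{\rho'}$) integrable majorant $C_0\abs{\Delta X_t^\eta}_*^{-1}$ produced above, then shows that $y_1\mapsto F_t(\eta)(y_1,y_2)$ is holomorphic on $U_{\rho'}$; and differentiation under the integral sign is justified by the same majorant (plus the analogous bound for $\partial_{y_1}$ of the integrand, which costs one extra power of $\abs{\Delta X_t^\eta}_*^{-1}$ but is compensated by a factor $\abs{z_1}$ coming from differentiating the increments, exactly as in the proof of Lemma \ref{lem:initial_speed}). This gives property (i) and also that $\partial_{y_1}F_t(\eta)$ is given by an absolutely convergent integral with $\norm{\partial_{y_1}F_t(\eta)}_{L^\infty(\Omega_{\rho'})}\leq C_0\abs{\log t}$. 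For property (ii), i.e. existence, uniform continuity and fibre-wise holomorphy of $\partial_{y_2}F_t(\eta)$, I would again differentiate under the integral in $z$: $\partial_{y_2}$ of the integrand produces $\partial_{y_2}(\Delta X_{t,2}^\eta)=t+\tfrac12 t^{1+\alpha}\partial_{y_2}\eta(y)$ hitting $\partial_{a_2}K_2$ (gaining $\abs{\Delta X_t^\eta}_*^{-1}$, but now compensated by the factor $t$ via \eqref{eq:estimate_for_ty2-z2}), plus a term where $\partial_{y_2}$ hits $\Delta\partial_{y_1}f_t^\eta$, i.e. $\tfrac12 t^{1+\alpha}\partial_{y_1}\partial_{y_2}\eta(y)$, which is bounded by Lemma \ref{lem:cauchy}; uniform continuity of $\partial_{y_2}F_t(\eta)$ on $\Omega_{\rho'}$ follows from uniform continuity of $\partial_{y_2}\eta$ and dominated convergence, and fibre-wise holomorphy in $y_1$ is inherited as before. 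Since the three $L^\infty(\Omega_{\rho'})$-norms are each $\le C_0\abs{\log t}<\infty$, property (iii) holds and $F_t(\eta)\in B_{\rho'}$.

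It remains to prove continuity of $t\mapsto F_t(\eta)$ from $[0,T)$ into $B_{\rho'}$. Away from $t=0$ this is a routine dominated-convergence argument: for $t$ in a fixed interval $[t_0,T)$ the majorant $C_0\abs{z_1}^{-1}+C_0 t_0^{-1}$-type bound (more precisely the uniform-integrability statements in Lemmas \ref{lem:f_maps_into_good_set} and \ref{lem:estimate_on_integral}) is integrable uniformly in $t$, the integrand and all its needed $y$-derivatives depend continuously on $t$ for each fixed $z$ with $z_1\neq 0$, and hence $F_t(\eta)(y)$, $\partial_{y_1}F_t(\eta)(y)$, $\partial_{y_2}F_t(\eta)(y)$ converge uniformly in $y\in\Omega_{\rho'}$ as $t\to t_1\in(0,T)$. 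Continuity at $t=0$, where $F_0(\eta)=0$ by definition, is exactly the estimate $\norm{F_t(\eta)}_{\rho'}\leq C_0\abs{\log t}\,t^{?}$ — wait, here one must be slightly careful: the bounds above only give $\norm{F_t(\eta)}_{\rho'}\le C_0\abs{\log t}$, which does not tend to $0$. However, the subtraction of $K_2(\Delta X_0)\Delta\gamma_0'$ in the definition of $F$ is designed precisely so that the integrand tends to $0$ pointwise as $t\to 0$ (for $z_1\neq 0$), since $\Delta X_t^\eta\to\Delta X_0$ and $\Delta\partial_{y_1}f_t^\eta\to\Delta\gamma_0'$; combined with a $t$-uniform integrable majorant on a neighbourhood of $t=0$ — which one obtains by writing the difference of the two $K_2$-terms as an integral of its $t$-derivative and using \eqref{eq:boundK}, \eqref{eq:estimate_for_ty2-z2} together with the Lipschitz bounds, yielding a bound $\le C_0\abs{z_1}^{-1+\epsilon}$ or at worst $\le C_0 g(z)$ with $g\in L^1(\Omega_0)$ independent of small $t$ — dominated convergence gives $\norm{F_t(\eta)}_{\rho'}\to 0$, and similarly for the $y$-derivatives. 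I expect this continuity-at-$t=0$ step, specifically producing a $t$-uniform $L^1$-majorant for the $K_2$-difference that also controls the $y_1$- and $y_2$-derivatives, to be the main technical obstacle, as it requires exploiting the cancellation between the two kernel terms rather than bounding them separately (separately each is only $O(\abs{\log t})$). Everything else is dominated convergence plus the already-established pointwise bounds.
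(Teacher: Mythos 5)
Your plan follows the same route as the paper: well-definedness via the kernel bounds of Lemmas \ref{lem:good_sets_for_K_2} and \ref{lem:f_maps_into_good_set}, control of the increments of $\partial_{y_1}f_t^\eta$, differentiation under the integral (with Lemma \ref{lem:cauchy} supplying the second $y_1$-derivatives of $\eta$), and continuity in $t$ by combining Lipschitz dependence for $\abs{z_1}$ bounded away from $0$ with uniform integrability on the remaining strip. One small imprecision: $\Delta\partial_{y_1}\eta(y,z)$ is an increment in \emph{both} variables, so it is bounded by $C(\abs{z_1}+\abs{y_2-z_2})$ rather than $C_0\abs{z_1}$; the $\abs{y_2-z_2}$ part is only controlled by $\abs{\Delta X_t^\eta}_*$ after absorbing one factor of $t$ from the prefactor $t^{1+\alpha}$ via \eqref{eq:estimate_for_ty2-z2}. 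The conclusion $\abs{\Delta\partial_{y_1}f_t^\eta}\leq C_0\abs{\Delta X_t^\eta}_*$ is nevertheless correct and is exactly the paper's \eqref{eq:estimate_for_Delta_partialy1_f}.

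The step you flag as ``the main technical obstacle'' --- continuity at $t=0$ --- is where you deviate, and the obstacle disappears if you keep the full strength of the increment estimate. Since $\abs{\Delta\partial_{y_1}f_t^\eta}\lesssim\abs{\Delta X_t^\eta}_*$ exactly compensates $\abs{K_2}\lesssim\abs{\Delta X_t^\eta}_*^{-1}$, the integrand $K_2(\Delta X_t^\eta)\Delta\partial_{y_1}f_t^\eta$ is bounded by a \emph{constant} uniformly in $z$ and $t$ (the paper's \eqref{eq:majorant1_function}), not merely by $C_0\abs{\Delta X_t^\eta}_*^{-1}$; discarding this compensation is what leaves you with the non-vanishing bound $C_0\abs{\log t}$ and pushes you toward a fundamental-theorem-of-calculus argument in $t$ that you do not carry out. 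With the constant majorant, the interior-time argument applies verbatim at $t_0=0$: on $\set{\abs{z_1}>\delta}$ the integrand is Lipschitz in $t$ with constant $C_0\delta^{-2}$ and vanishes at $t=0$ by the cancellation with the subtracted term $K_2(\Delta X_0)\Delta\gamma_0'$, while the strip $\set{\abs{z_1}<\delta}$ contributes $O(\delta)$ uniformly in $t$. The same works for $\partial_{y_1}F_t(\eta)$ via \eqref{eq:majorant2_partialy1}; for $\partial_{y_2}F_t(\eta)$ the majorant is not constant but carries an extra factor $t$, namely $C_0t\abs{\Delta X_t^\eta}_*^{-1}$, whose integral is $C_0t\abs{\log t}\rightarrow 0$ by Lemma \ref{lem:estimate_on_integral} --- precisely the remark with which the paper closes its proof. (Your FTC-in-$t$ route could also be completed, since the difference of the two kernel terms is bounded by $C_0\min(1,t\abs{z_1}^{-1})$, but it is not needed.) Apart from this incomplete step, the plan is sound.
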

\begin{proof} Let $\eta\in B_\rho$, $\norm{\eta}_\rho<1$ and $t\in(0,T)$. In view of Lemma \ref{lem:initial_speed} it remains to look at 
\begin{align*}
\tilde{F}_t(\eta)(y):=F_t(\eta)(y)+2s_0(y_1)=-\int_{\Omega_0}K_2(\Delta X_t^\eta(y,z))\Delta\partial_{y_1}f_t^\eta(y,z)\:dz.
\end{align*}

For $y\in\Omega_{\rho}$ and $z\in\Omega_0$ with $z_1\neq 0$ one computes
\begin{align}
\partial_{y_1}\big(K_2(\Delta X_t^\eta)\Delta\partial_{y_1}f^\eta_t \big)&=\partial_{a_2}K_2(\Delta X_t^\eta)(\Delta\partial_{y_1}f_t^\eta)^2+K_2(\Delta X^\eta_t)\Delta \partial_{y_1}^2f_t^\eta,\label{eq:formula_for_derivative_y1}\\
\begin{split}
\partial_{y_2}\big(K_2(\Delta X_t^\eta)\Delta\partial_{y_1}f^\eta_t \big)&=\partial_{a_2}K_2(\Delta X_t^\eta)\left(t+\frac{1}{2}t^{1+\alpha}\partial_{y_2}\eta(y)\right)\Delta\partial_{y_1}f_t^\eta\\
&\hspace{65pt}+K_2(\Delta X_t^\eta)\frac{1}{2}t^{1+\alpha}\partial_{y_2}\partial_{y_1}\eta(y),\label{eq:formula_for_derivative_y2}
\end{split}
\end{align}
where, as usual, we have omitted the $(y,z)$ dependence in the $\Delta$-notation.

In order to get uniform integrability we use \eqref{eq:estimate_for_ty2-z2} to estimate
\begin{align}
\begin{split}\label{eq:estimate_for_Delta_partialy1_f}
\abs{\Delta \partial^j_{y_1}f_t^\eta}&\leq C_0\abs{z_1}+\norm{\partial_{y_1}^{j+1}\eta}_{L^\infty(\Omega_{\rho'})}\abs{z_1}+\norm{\partial_{y_2}\partial_{y_1}^j\eta}_{L^\infty(\Omega_{\rho'})}t\abs{y_2-z_2}\\
&\leq C_0\left(1+\norm{\partial^j_{y_1}\nabla_{y}\eta}_{L^\infty(\Omega_{\rho'})}\right)\abs{\Delta X_t^\eta}_*
\end{split}
\end{align}
for $y\in\Omega_{\rho'}$, $j=0,1,2$. Now \eqref{eq:estimate_for_Delta_partialy1_f} and Lemmas \ref{lem:good_sets_for_K_2}, \ref{lem:f_maps_into_good_set} imply
\begin{align}\label{eq:majorant1_function}
\abs{K_2(\Delta X_t^\eta)\Delta\partial_{y_1}f^\eta_t}\leq C_0\left(1+\norm{\partial_{y_1}\nabla_{y}\eta}_{L^\infty(\Omega_{\rho'})}\right).
\end{align}

As a consequence $\tilde{F}_t$, and thus $F_t$, maps at least into $L^\infty(\Omega_{\rho'})$. Moreover, combining similarly \eqref{eq:boundK}, \eqref{eq:estimate_for_Delta_partialy1_f} for $j=1,2$, and \eqref{eq:estimate_for_ty2-z2} to estimate 
\eqref{eq:formula_for_derivative_y1}, \eqref{eq:formula_for_derivative_y2} one sees that
\begin{align}\label{eq:majorant2_partialy1}
\abs{\partial_{y_1}\big(K_2(\Delta X_t^\eta)\Delta\partial_{y_1}f^\eta_t \big)}&\leq C_0\left(1+\norm{\partial_{y_1}\nabla_y\eta}_{L^\infty(\Omega_{\rho'})}^2+\norm{\partial_{y_1}^2\nabla_y\eta}_{L^\infty(\Omega_{\rho'})}\right),
\end{align}
and, recall $t<1$, $\abs{\partial_{y_2}\eta(y)}<1$, that
\begin{align}\label{eq:majorant3_partialy2}
\abs{\partial_{y_2}\big(K_2(\Delta X_t^\eta)\Delta\partial_{y_1}f^\eta_t \big)}&\leq C_0\frac{t}{\abs{\Delta X_t^\eta}_*}\left(1+\norm{\partial_{y_1}\nabla_y\eta}_{L^\infty(\Omega_{\rho'})}\right).
\end{align}
It follows that the complex derivative $\partial_{y_1}F_t(\eta)$ exists and is bounded on $\Omega_{\rho'}$. Moreover, in view of Lemma \ref{lem:estimate_on_integral} the same is true for the (real) derivative $\partial_{y_2}F_t(\eta)$.

Next we turn to the required uniform continuity of $\partial_{y_2}F_t(\eta)$ on $\Omega_{\rho'}$. First of all observe that the corresponding integrant \eqref{eq:formula_for_derivative_y2} as a function of $(z,y)\in\Omega_0\times\Omega_{\rho'}$ is uniformly continuous on subsets which have their $z_1$-component bounded away from $0$. Here one uses the Cauchy integral formula and the assumed uniform continuity of $\partial_{y_2}\eta$ on the larger set $\Omega_\rho$ in order to conclude the uniform continuity of $\partial_{y_2}\partial_{y_1}\eta(y)=\partial_{y_1}\partial_{y_2}\eta(y)$. This together with the uniform integrability of the majorant given in \eqref{eq:majorant3_partialy2} via Lemma \ref{lem:estimate_on_integral} implies that $\partial_{y_2}F_t(\eta)$ is uniformly continuous on $\Omega_{\rho'}$, cf. also the argument below for continuity in time.

Moreover, in a similar way as above for \eqref{eq:majorant1_function}, \eqref{eq:majorant2_partialy1}, \eqref{eq:majorant3_partialy2} one can check that for any $y\in\Omega_{\rho'}$, $z_1\neq 0$ there holds
\begin{align*}
\abs{\partial_{y_1}\partial_{y_2}\big(K_2(\Delta X_t^\eta)\Delta\partial_{y_1}f^\eta_t \big)}\leq \frac{C_0t}{\abs{\Delta X_t^\eta}_*}\left(1+\norm{\partial_{y_1}\nabla_y\eta}_{L^\infty(\Omega_{\rho'})}^2+\norm{\partial_{y_1}^2\nabla_y\eta}_{L^\infty(\Omega_{\rho'})}\right),
\end{align*}
which implies that also $\partial_{y_2}F_t(\eta)$ is complex differentiable in $y_1$.

In order to conclude $F_t(\eta)\in B_{\rho'}$ it therefore only remains to observe that $F_t(y)\in\R$ for $y\in\Omega_0$.

It remains to prove the continuity of $[0,T)\ni t\mapsto F_t(\eta)\in B_{\rho'}$. Let $t,t_0\in(0,T)$ and take $\delta>0$ sufficiently small. For $z\in\Omega_0$ with $\abs{z_1}>\delta$, as well as $y\in \Omega_{\rho'}$ there holds
\begin{align*}
\abs{K_2(\Delta X_t^\eta)\Delta \partial_{y_1}f_t^\eta-K_2(\Delta X_{t_0}^\eta)\Delta \partial_{y_1}f_{t_0}^\eta}\leq \frac{C_0}{\delta^2}\abs{t-t_0}
\end{align*}
due to Lemmas \ref{lem:good_sets_for_K_2}, \ref{lem:f_maps_into_good_set}. On the set $\set{z\in\Omega_0:\abs{z_1}<\delta}$ one uses the uniform majorant given in \eqref{eq:majorant1_function} to conclude the continuity of $(0,T)\ni t\mapsto F_t(\eta)$ with respect to $\norm{\cdot}_{L^\infty(\Omega_{\rho'})}$. 

For the corresponding continuity of $\partial_{y_1}F_t(\eta)$, $\partial_{y_2}F_t(\eta)$ with respect to $\norm{\cdot}_{L^\infty(\Omega_{\rho'})}$ one uses a similar combination of Lipschitz continuity on $\abs{z_1}>\delta$ and uniform integrability on the strip $\abs{z_1}<\delta$ induced by \eqref{eq:majorant2_partialy1}, \eqref{eq:majorant3_partialy2} and Lemma \ref{lem:estimate_on_integral}. 

Finally, continuity at $t_0=0$ can be shown in the exact same way by noting that compared to Lemma \ref{lem:estimate_on_integral} the additional factor $t$ in \eqref{eq:majorant3_partialy2} for $\partial_{y_2}F_t(\eta)$ renders $t\abs{\Delta X_t^\eta}_*^{-1}$ to be uniformly integrable with respect to $t$ taken from the open interval $t\in(0,T)$.
\end{proof}

\begin{remark}
The continuity of $F$ as stated in assumption \ref{cond:F_well_defined} of Theorem \ref{thm:abstract_CK} will follow from Lemma \ref{lem:F_welldef} when combined with the Lipschitz property of Lemma \ref{lem:contraction} below.
\end{remark}

\subsection{Contraction property}\label{sec:contraction}
Next we will verify assumption \ref{cond:F_lipschitz} of Theorem \ref{thm:abstract_CK} with $b(t)=C_0t^{1+\alpha}\abs{\log t}$.
Let $\rho_0,R,T>0$ be as in Section \ref{sec:def_of_F}. Recall that $R=1$ and $T=T(R,C_0,\alpha)<1$. Without loss of generality we also assume $\rho_0<1$.
\begin{lemma}\label{lem:contraction}
For all $0<\rho'<\rho<\rho_0$, $\eta,\zeta\in B_\rho$, $\norm{\eta}_\rho<1$, $\norm{\zeta}_\rho<1$ and $t\in[0,T)$ there holds
\begin{align*}
\norm{F_t(\eta)-F_t(\zeta)}_{\rho'}\leq \frac{C_0t^{1+\alpha}\abs{\log t}}{\rho-\rho'}\norm{\eta-\zeta}_\rho.
\end{align*}
\end{lemma}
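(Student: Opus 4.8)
The plan is to estimate the difference $F_t(\eta) - F_t(\zeta)$ together with its derivatives $\partial_{y_1}$ and $\partial_{y_2}$ directly from the integral defining $F$. Since $s_0$ does not depend on $\eta$, it suffices to work with $\tilde F_t(\eta)(y) = -\int_{\Omega_0} K_2(\Delta X_t^\eta)\,\Delta\partial_{y_1}f_t^\eta\,dz$. Writing the integrand as a product of two factors $K_2(\Delta X_t^\eta)$ and $\Delta\partial_{y_1}f_t^\eta$, I would split the difference using the usual telescoping identity
\begin{align*}
K_2(\Delta X_t^\eta)\Delta\partial_{y_1}f_t^\eta - K_2(\Delta X_t^\zeta)\Delta\partial_{y_1}f_t^\zeta &= \big(K_2(\Delta X_t^\eta)-K_2(\Delta X_t^\zeta)\big)\Delta\partial_{y_1}f_t^\eta \\
&\quad + K_2(\Delta X_t^\zeta)\big(\Delta\partial_{y_1}f_t^\eta - \Delta\partial_{y_1}f_t^\zeta\big),
\end{align*}
and estimate each term separately, and likewise after applying $\partial_{y_1}$ or $\partial_{y_2}$ (using formulas \eqref{eq:formula_for_derivative_y1}, \eqref{eq:formula_for_derivative_y2} and their analogues for the difference). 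The key structural point is that every appearance of $\eta-\zeta$ enters through a difference $\Delta(\eta-\zeta)(y,z) = (\eta-\zeta)(y) - (\eta-\zeta)(y_1-z_1,z_2)$ or a difference of its derivatives, each carrying a prefactor $\tfrac12 t^{1+\alpha}$ from the definition \eqref{eq:definition_of_feta_Xeta} of $f_t^\eta$.

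The heart of the matter is to reproduce, for the difference, the same cancellation that made $F$ well-defined in Lemma \ref{lem:F_welldef}. For the second term above this is immediate: by Lemma \ref{lem:good_sets_for_K_2} one has $\abs{K_2(\Delta X_t^\zeta)} \leq C\abs{\Delta X_t^\zeta}_*^{-1}$, and $\Delta\partial_{y_1}f_t^\eta - \Delta\partial_{y_1}f_t^\zeta = \tfrac12 t^{1+\alpha}\Delta\partial_{y_1}(\eta-\zeta)$, whose modulus is bounded, exactly as in \eqref{eq:estimate_for_Delta_partialy1_f}, by $C_0 t^{1+\alpha}(\abs{z_1} + t\abs{y_2-z_2})\norm{\eta-\zeta}_\rho \leq C_0 t^{1+\alpha}\abs{\Delta X_t^\zeta}_*\norm{\eta-\zeta}_\rho$ using \eqref{eq:estimate_for_ty2-z2} for $\zeta$ (here one wants the $\partial_{y_1}$-derivative of $\eta-\zeta$ evaluated on $\Omega_{\rho'}$, which by the Cauchy estimate Lemma \ref{lem:cauchy} costs a factor $\frac{C}{\rho-\rho'}$ when we later need its $\partial_{y_1}$-derivative for the estimate of $\partial_{y_1}F$). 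For the first term, I would use that $K_2$ is holomorphic on $\cU^{3/8}$ with $\abs{\partial_{a_2}K_2(a)}\leq C\abs{a}_*^{-2}$, write $K_2(\Delta X_t^\eta)-K_2(\Delta X_t^\zeta) = \int_0^1 \partial_{a_2}K_2\big(\Delta X_t^{\zeta + \tau(\eta-\zeta)}\big)\,d\tau\cdot \tfrac12 t^{1+\alpha}\Delta(\eta-\zeta)$, note that the whole segment stays in $\overline{\cU^{3/8}}$ by Lemma \ref{lem:f_maps_into_good_set}, so $\abs{\partial_{a_2}K_2} \leq C\abs{\Delta X_t^{\zeta+\tau(\eta-\zeta)}}_*^{-2} \leq C_0\abs{\Delta X_t^\zeta}_*^{-2}$ (the equivalence of $\abs{\Delta X_t^\xi}_*$ for different $\xi$ following from the lower bound $\abs{z_1}+t\abs{y_2-z_2} \lesssim \abs{\Delta X_t^\xi}_*$ and the trivial upper bound), and $\abs{\Delta(\eta-\zeta)} \leq C\abs{z_1}\,\norm{\partial_{y_1}(\eta-\zeta)}_{L^\infty(\Omega_\rho)} \leq C\abs{\Delta X_t^\zeta}_*\norm{\eta-\zeta}_\rho$; combined with the bound $\abs{\Delta\partial_{y_1}f_t^\eta}\leq C_0\abs{\Delta X_t^\eta}_* \leq C_0\abs{\Delta X_t^\zeta}_*$ from \eqref{eq:estimate_for_Delta_partialy1_f}, the product is $\leq C_0 t^{1+\alpha}\abs{\Delta X_t^\zeta}_*^{-1}\norm{\eta-\zeta}_\rho$.

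In all cases the pointwise bound on the integrand of $F_t(\eta)-F_t(\zeta)$, and on that of $\partial_{y_2}\big(F_t(\eta)-F_t(\zeta)\big)$, is of the form $C_0 t^{1+\alpha}\,\abs{\Delta X_t^\zeta(y,z)}_*^{-1}\,\norm{\eta-\zeta}_\rho$ possibly with an extra factor $\frac{1}{\rho-\rho'}$ coming from one application of Lemma \ref{lem:cauchy} (needed whenever a second $y_1$-derivative of $\eta-\zeta$ shows up, i.e. for the $\partial_{y_1}$-part of the norm); after integrating in $z$ over $\Omega_0$ and invoking Lemma \ref{lem:estimate_on_integral}, $\int_{\Omega_0}\abs{\Delta X_t^\zeta}_*^{-1}\,dz \leq C_0\abs{\log t}$, one arrives at $\norm{F_t(\eta)-F_t(\zeta)}_{\rho'} \leq \frac{C_0 t^{1+\alpha}\abs{\log t}}{\rho-\rho'}\norm{\eta-\zeta}_\rho$ as claimed, with the case $t=0$ trivial since $F_0\equiv 0$. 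I expect the main obstacle to be bookkeeping: organizing the many product-rule terms for $\partial_{y_1}$ and $\partial_{y_2}$ of the difference so that each one either (a) already gains the full $t^{1+\alpha}$ from a factor $\Delta(\eta-\zeta)$ or one of its derivatives, with at most one remaining derivative on $\eta-\zeta$ to be absorbed by a single Cauchy estimate, or (b) — as for the $\partial_{y_2}$ terms — gains an additional $t$ that, as in Lemma \ref{lem:F_welldef}, is harmless; and checking that the Lipschitz constant of $K_2$ and $\partial_{a_2}K_2$ along the whole segment is controlled uniformly by $\abs{\Delta X_t^\zeta}_*$, which is exactly where Lemma \ref{lem:f_maps_into_good_set} applied to the convex combination $\zeta + \tau(\eta-\zeta)$ (itself of norm $<1$) is used.
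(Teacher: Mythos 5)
Your overall architecture coincides with the paper's: the same telescoping decomposition, the fundamental theorem of calculus along the segment $\xi_\lambda=\zeta+\lambda(\eta-\zeta)$ (which stays in the unit ball, so Lemmas \ref{lem:good_sets_for_K_2} and \ref{lem:f_maps_into_good_set} apply), the kernel bounds $\abs{\partial_{a_2}^jK_2}\lesssim\abs{a}_*^{-(1+j)}$, a single Cauchy estimate for the one lost derivative, and Lemma \ref{lem:estimate_on_integral} to produce the $\abs{\log t}$. However, there is one genuine flaw in a load-bearing step. The inequality you assert, $\abs{\Delta(\eta-\zeta)(y,z)}\leq C\abs{z_1}\,\norm{\partial_{y_1}(\eta-\zeta)}_{L^\infty(\Omega_\rho)}\leq C\abs{\Delta X_t^\zeta(y,z)}_*\norm{\eta-\zeta}_\rho$, is false: by the notation \eqref{eq:Delta_notation}, $\Delta(\eta-\zeta)(y,z)=(\eta-\zeta)(y)-(\eta-\zeta)(y_1-z_1,z_2)$ involves an increment in \emph{both} coordinates, and the vertical increment $\abs{y_2-z_2}$ can be of order one while $\abs{\Delta X_t^\zeta}_*\sim\abs{z_1}+t\abs{y_2-z_2}$ is arbitrarily small for small $t$. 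The correct statements are either the plain sup bound $\abs{\Delta(\eta-\zeta)}\leq 2\norm{\eta-\zeta}_{L^\infty(\Omega_{\rho'})}$ (no gain of $\abs{\Delta X}_*$) or, when an extra factor $t$ is available, $t\abs{\Delta(\eta-\zeta)}\leq C_0\norm{\eta-\zeta}_{\rho'}\abs{\Delta X_t^\xi}_*$ as in \eqref{eq:contraction_aux4}.

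For the zeroth-order term this slip is harmless (the sup bound already yields an integrand $\lesssim t^{1+\alpha}\abs{\Delta X}_*^{-1}\norm{\eta-\zeta}$). But it becomes fatal for the $\partial_{y_1}$-term in which the kernel difference multiplies $(\Delta\partial_{y_1}f_t^\eta)^2$ (the term $A_1$ in the paper): each factor $\Delta\partial_{y_1}f_t^\eta$ gains a power of $\abs{\Delta X}_*$ only at the cost of one Cauchy factor $\frac{1}{\rho-\rho'}$ (it needs $\norm{\partial_{y_1}\nabla_y\eta}_{L^\infty(\Omega_{\rho'})}$), so with only the sup bound on $\Delta(\eta-\zeta)$ you end up with either $(\rho-\rho')^{-2}$ or a non-integrable kernel $\abs{\Delta X}_*^{-2}$ — both destroy the single-loss-of-derivative structure that condition \ref{cond:F_lipschitz} of Theorem \ref{thm:abstract_CK} demands. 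The paper's resolution is the pairing estimate \eqref{eq:contraction_aux2}: the product $\Delta(\eta-\zeta)\,\Delta\partial_{y_1}f_t^\eta$ gains one power of $\abs{\Delta X}_*$ with \emph{no} Cauchy factor, by splitting $\Delta\partial_{y_1}f_t^\eta$ into its $\gamma_0',s_0'$ part (which is $\leq C_0\abs{z_1}$ and pairs with the sup of $\Delta(\eta-\zeta)$) and its $\tfrac12 t^{1+\alpha}\Delta\partial_{y_1}\eta$ part (which is sup-bounded, and whose explicit factor $t$ converts $\Delta(\eta-\zeta)$ via \eqref{eq:contraction_aux4}); the remaining factor $\Delta\partial_{y_1}f_t^\eta$ then carries the single allowed $\frac{1}{\rho-\rho'}$. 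You need this (or an equivalent) mechanism; the bookkeeping does not close with the tools as you have stated them.
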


For the proof of Lemma \ref{lem:contraction} we first of all state some estimates implied by the Lemmas in Section \ref{sec:preliminary_lemmas}. 
\begin{lemma}\label{lem:contraction_auxiliary}
Let $0<\rho'<\rho<\rho_0$ and $\eta,\xi,\zeta\in B_\rho$ with $\norm{\eta}_\rho,\norm{\xi}_\rho,\norm{\zeta}_\rho<1$. For $y\in\Omega_{\rho'}$, $z\in\Omega_0$, $t\in[0,T)$ there holds
\begin{gather}
t\abs{\Delta \zeta(y,z)}\leq C_0\norm{\zeta}_{\rho'}|\Delta X_t^\xi(y,z)|_*,\label{eq:contraction_aux4}\\
t\abs{\Delta \partial_{y_1}\zeta(y,z)}\leq \frac{C_0}{\rho-\rho'}\norm{\zeta}_\rho|\Delta X_t^\xi(y,z)|_*,\label{eq:contraction_aux5}\\
\abs{\Delta \partial_{y_1}f_t^\eta(y,z)}\leq \frac{C_0}{\rho-\rho'}|\Delta X_t^\xi(y,z)|_*,\label{eq:contraction_aux1}\\
\abs{\Delta \zeta(y,z)\Delta\partial_{y_1}f_t^\eta(y,z)}\leq C_0\norm{\zeta}_{\rho'}|\Delta X_t^\xi(y,z)|_*,\label{eq:contraction_aux2}\\
\abs{\Delta\zeta(y,z)\Delta\partial_{y_1}^2f_t^\eta(y,z)}\leq \frac{C_0}{\rho-\rho'}\norm{\zeta}_{\rho'}|\Delta X_t^\xi(y,z)|_*.\label{eq:contraction_aux3}
\end{gather}
\end{lemma}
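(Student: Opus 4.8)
The plan is to reduce all five inequalities to three elementary ingredients, all valid uniformly for $y\in\Omega_{\rho'}$, $z\in\Omega_0$ and $t\in[0,T)$. First, since the first component of $\Delta X_t^\xi(y,z)$ equals $z_1$, one has $\abs{z_1}\le\abs{\Delta X_t^\xi(y,z)}_*$. Second, estimate \eqref{eq:estimate_for_ty2-z2} of Lemma~\ref{lem:f_maps_into_good_set}, applied with $\xi$ in the role of $\eta$, gives $t\abs{y_2-z_2}\le C_0\abs{\Delta X_t^\xi(y,z)}_*$; these are the only two places where $\xi$ enters, as it appears only on the right-hand sides. Third, for $g\in\{\zeta,\partial_{y_1}\zeta\}$ there is the difference bound
\[
\abs{\Delta g(y,z)}\le\norm{\partial_{y_1}g}_{L^\infty(\Omega_{\rho'})}\abs{z_1}+\norm{\partial_{y_2}g}_{L^\infty(\Omega_{\rho'})}\abs{y_2-z_2},
\]
obtained by inserting the intermediate point $(y_1-z_1,y_2)$ and integrating $\partial_{y_1}g$ along the horizontal complex segment from $y_1-z_1$ to $y_1$ (which stays in $U_{\rho'}$ because $z_1$ is real) and $\partial_{y_2}g$ along the real segment from $z_2$ to $y_2$ inside $(-2,2)$. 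I would use throughout that $t<T<1$, $t^\alpha<1$, $\rho-\rho'<\rho_0<1$, $\abs{z_1}\le\pi$ and $\abs{y_2-z_2}<4$, that $\norm{\partial_{y_1}\zeta}_{L^\infty(\Omega_{\rho'})},\norm{\partial_{y_2}\zeta}_{L^\infty(\Omega_{\rho'})}\le\norm{\zeta}_{\rho'}$ directly from the definition of $\norm{\cdot}_{\rho'}$, that $\norm{\partial_{y_1}^2\zeta}_{L^\infty(\Omega_{\rho'})},\norm{\partial_{y_1}\partial_{y_2}\zeta}_{L^\infty(\Omega_{\rho'})}\le\frac{C}{\rho-\rho'}\norm{\zeta}_\rho$ by Lemma~\ref{lem:cauchy}, and that $\abs{\gamma_0^{(k)}(y_1)-\gamma_0^{(k)}(y_1-z_1)}\le C_0\abs{z_1}$, $\abs{s_0^{(k)}(y_1)-s_0^{(k)}(y_1-z_1)}\le C_0\abs{z_1}$ for $k=1,2$ by \eqref{eq:bounds_for_derivative_of_gamma_0} and Lemma~\ref{lem:initial_speed}.

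Given these, \eqref{eq:contraction_aux4} follows by multiplying the difference bound for $g=\zeta$ by $t$ and using $t\abs{z_1}\le\abs{z_1}\le\abs{\Delta X_t^\xi}_*$ together with the second ingredient; and \eqref{eq:contraction_aux5} is the same computation with $g=\partial_{y_1}\zeta$, where the difference bound now invokes the Cauchy estimates and therefore carries the factor $\frac{1}{\rho-\rho'}\norm{\zeta}_\rho$. For \eqref{eq:contraction_aux1} I would expand $\Delta\partial_{y_1}f_t^\eta(y,z)=\Delta\gamma_0'(y_1,z_1)+t\,\Delta s_0'(y_1,z_1)+\frac12 t^{1+\alpha}\Delta\partial_{y_1}\eta(y,z)$, bound the first two summands by $C_0\abs{z_1}\le C_0\abs{\Delta X_t^\xi}_*$, and bound the last by $t^{1+\alpha}\abs{\Delta\partial_{y_1}\eta}\le t\abs{\Delta\partial_{y_1}\eta}$ and then apply \eqref{eq:contraction_aux5} with $\zeta=\eta$, using $\norm{\eta}_\rho<1$.

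The only delicate point, where I expect most of the care to be needed, is the pair of product estimates \eqref{eq:contraction_aux2}, \eqref{eq:contraction_aux3}, since \eqref{eq:contraction_aux2} must be established \emph{without} an extra factor $\frac{1}{\rho-\rho'}$. The trick is to \emph{not} Cauchy-estimate $\partial_{y_1}\eta$ there, but to bound $\abs{\Delta\partial_{y_1}\eta(y,z)}\le 2\norm{\partial_{y_1}\eta}_{L^\infty(\Omega_{\rho'})}<2$ crudely, and to let the prefactor $t^{1+\alpha}=t^\alpha\cdot t$ supply, via the second ingredient, the power of $t$ needed to absorb the $\abs{y_2-z_2}$ part of $\abs{\Delta\zeta}$. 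Concretely, splitting $\Delta\partial_{y_1}f_t^\eta$ as in the previous paragraph: the contribution of $\Delta\gamma_0'+t\,\Delta s_0'$ is $\abs{\Delta\zeta}\cdot C_0\abs{z_1}\le C_0\norm{\zeta}_{\rho'}(\abs{z_1}+\abs{y_2-z_2})\abs{z_1}\le C_0\norm{\zeta}_{\rho'}\abs{z_1}\le C_0\norm{\zeta}_{\rho'}\abs{\Delta X_t^\xi}_*$ (using $\abs{z_1}\le\pi$, $\abs{y_2-z_2}<4$), while the contribution of $\frac12 t^{1+\alpha}\Delta\partial_{y_1}\eta$ is at most $t^{1+\alpha}\norm{\zeta}_{\rho'}(\abs{z_1}+\abs{y_2-z_2})$, and here $t^{1+\alpha}\abs{z_1}\le\abs{z_1}\le\abs{\Delta X_t^\xi}_*$ and $t^{1+\alpha}\abs{y_2-z_2}=t^\alpha\cdot t\abs{y_2-z_2}\le C_0\abs{\Delta X_t^\xi}_*$. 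Estimate \eqref{eq:contraction_aux3} is handled the same way, splitting $\Delta\partial_{y_1}^2 f_t^\eta(y,z)=\Delta\gamma_0''(y_1,z_1)+t\,\Delta s_0''(y_1,z_1)+\frac12 t^{1+\alpha}\Delta\partial_{y_1}^2\eta(y,z)$; the one change is that bounding $\abs{\Delta\partial_{y_1}^2\eta(y,z)}\le 2\norm{\partial_{y_1}^2\eta}_{L^\infty(\Omega_{\rho'})}\le\frac{C}{\rho-\rho'}\norm{\eta}_\rho$ now genuinely requires Lemma~\ref{lem:cauchy}, which is precisely the origin of the admissible $\frac{1}{\rho-\rho'}$ on the right of \eqref{eq:contraction_aux3}.
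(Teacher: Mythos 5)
Your proposal is correct and follows essentially the same route as the paper: the same splitting of $\Delta\partial_{y_1}^jf_t^\eta$ into the $\gamma_0$, $s_0$ and $\frac12 t^{1+\alpha}\eta$ parts, the same use of $\abs{z_1}\leq\abs{\Delta X_t^\xi}_*$ and of \eqref{eq:estimate_for_ty2-z2} to absorb $t\abs{y_2-z_2}$, Cauchy's Lemma \ref{lem:cauchy} exactly where the factor $(\rho-\rho')^{-1}$ appears, and in \eqref{eq:contraction_aux2} the crude $L^\infty$ bound on $\partial_{y_1}\eta$ combined with the $t$-gain on $\Delta\zeta$, which is precisely the paper's mechanism. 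The only (cosmetic) deviation is that you rederive the $t$-gain directly from \eqref{eq:estimate_for_ty2-z2} instead of citing \eqref{eq:contraction_aux4}, \eqref{eq:contraction_aux5} in the later estimates.
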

\begin{proof}
By \eqref{eq:estimate_for_ty2-z2} in Lemma \ref{lem:f_maps_into_good_set} one deduces
\begin{align*}
t\abs{\Delta \zeta}\leq \norm{\nabla_y\zeta}_{L^\infty(\Omega_{\rho'})}\left(\abs{z_1}+t\abs{y_2-z_2}\right)\leq C_0\norm{\zeta}_{\rho'}|\Delta X_t^\xi(y,z)|_*.
\end{align*}
This shows \eqref{eq:contraction_aux4}. Inequality \eqref{eq:contraction_aux5} is obtained in the same way by additionally applying Cauchy's Lemma \ref{lem:cauchy}.

Next, \eqref{eq:contraction_aux1} follows from
\begin{align*}
\abs{\Delta\partial_{y_1}f_t^\eta}&\leq C_0\abs{z_1}+t^{1+\alpha}\abs{\Delta\partial_{y_1}\eta}
\end{align*}
and \eqref{eq:contraction_aux5}, while \eqref{eq:contraction_aux2} is a consequence of 
\begin{align*}
\abs{\Delta\zeta\Delta\partial_{y_1}f_t^\eta}\leq C_0\norm{\zeta}_{L^\infty(\Omega_{\rho'})}\abs{z_1}+t^{1+\alpha}\abs{\Delta\zeta}\norm{\partial_{y_1}\eta}_{L^\infty(\Omega_{\rho'})}
\end{align*}
and \eqref{eq:contraction_aux4}.

Finally, \eqref{eq:contraction_aux3} is achieved in the same way as \eqref{eq:contraction_aux2}, but with an additional use of Lemma \ref{lem:cauchy}.
\end{proof}

\begin{proof}[Proof of Lemma \ref{lem:contraction}] Let $0<\rho'<\rho<\rho_0$ and $\eta,\zeta\in B_\rho$ be as stated. For $\lambda\in[0,1]$ define 
\[
\xi_\lambda:=\lambda\eta+(1-\lambda)\zeta.
\]
Then $\xi_\lambda\in B_\rho$ and $\norm{\xi_\lambda}_{\rho}<1$. 

Now for $y\in\Omega_{\rho'}$ we split
\begin{align}\begin{split}\label{eq:split_of_F_t}
\abs{F_t(\eta)(y)-F_t(\zeta)(y)}&\leq\int_{\Omega_0}\abs{\left(K_2(\Delta X_t^\eta)-K_2(\Delta X_t^\zeta)\right)\Delta \partial_{y_1}f_t^\eta}\:dz\\
&\hspace{25pt}+\int_{\Omega_0}\abs{K_2(\Delta X_t^\zeta)\left(\Delta \partial_{y_1}f_t^\eta-\Delta\partial_{y_1}f_t^\zeta\right)}\:dz.
\end{split}
\end{align}

In order to estimate the first term we first use the fundamental theorem of Calculus to write

\begin{align*}
\int_{\Omega_0}&|K_2(\Delta X_t^\eta)-K_2(\Delta X_t^\zeta)|\abs{\Delta \partial_{y_1}f_t^\eta}\:dz\\
&=\int_{\Omega_0}\abs{\int_0^1 \partial_{a_2}K_2(\Delta X^{\xi_\lambda}_t)\frac{1}{2}t^{1+\alpha}(\Delta \eta-\Delta \zeta)\:d\lambda}\abs{\Delta \partial_{y_1}f_t^\eta}\:dz.
\end{align*}
Now, $\abs{\Delta \partial_{y_1}f_t^\eta}$
is dealt with by  \eqref{eq:contraction_aux1} with $\xi=\xi_\lambda$, Lemma \ref{lem:good_sets_for_K_2} and its equation \eqref{eq:boundK} deal with $\partial_{a_2}K_2(\Delta X^{\xi_\lambda}_t)$ and by definition of $\|\cdot\|_{\rho'}$ we arrive to the estimate
\begin{align*}
\int_{\Omega_0}&|K_2(\Delta X_t^\eta)-K_2(\Delta X_t^\zeta)|\abs{\Delta \partial_{y_1}f_t^\eta}\:dz\\
&=\int_{\Omega_0}\abs{\int_0^1 \partial_{a_2}K_2(\Delta X^{\xi_\lambda}_t)\frac{1}{2}t^{1+\alpha}(\Delta \eta-\Delta \zeta)\:d\lambda}\abs{\Delta \partial_{y_1}f_t^\eta}\:dz\\
&\leq C_0t^{1+\alpha}\norm{\eta-\zeta}_{\rho'}\int_{\Omega_0}\int_0^1\frac{1}{|\Delta X_t^{\xi_\lambda}|_*^2}\frac{|\Delta X_t^{\xi_\lambda}|_*}{\rho-\rho'}\:d\lambda\:dz\leq \frac{C_0t^{1+\alpha}\abs{\log t}}{\rho-\rho'}\norm{\eta-\zeta}_{\rho},
\end{align*}
where the last inequality is a direct application of Lemma \ref{lem:estimate_on_integral}.
Again by Lemmas \ref{lem:good_sets_for_K_2}, \ref{lem:estimate_on_integral} the second term in \eqref{eq:split_of_F_t} is bounded by
\begin{align*}
\int_{\Omega_0}\abs{K_2(\Delta X_t^\zeta)\left(\Delta \partial_{y_1}f_t^\eta-\Delta\partial_{y_1}f_t^\zeta\right)}\:dz&\leq C\int_{\Omega_0}\frac{1}{|\Delta X_t^\zeta|_*}t^{1+\alpha}\abs{\Delta \partial_{y_1}\eta-\Delta\partial_{y_1}\zeta}\:dz\\
&\leq C_0t^{1+\alpha}\abs{\log t}\norm{\eta-\zeta}_{\rho}.
\end{align*}
Thus 
\begin{align*}
\norm{F_t(\eta)-F_t(\zeta)}_{L^\infty(\Omega_{\rho'})}\leq \frac{C_0t^{1+\alpha}\abs{\log t}}{\rho-\rho'}\norm{\eta-\zeta}_{\rho}.
\end{align*}

Let us now turn to the corresponding inequality with $\partial_{y_1}$. In a similar way as before we split
\begin{align*}
\partial_{y_1}F_t(\eta)(y)-\partial_{y_1}F_t(\zeta)(y)=-\int_{\Omega_0} A_1+A_2+A_3+A_4\:dz,
\end{align*}
where 
\begin{align*}
A_1&:=\left(\partial_{a_2}K_2(\Delta X_t^\eta)-\partial_{a_2}K_2(\Delta X_t^\zeta)\right)(\Delta \partial_{y_1}f_t^\eta)^2,\\
A_2&:=\partial_{a_2}K_2(\Delta X_t^\zeta)\left((\Delta \partial_{y_1}f_t^\eta)^2-(\Delta \partial_{y_1}f_t^\zeta)^2\right),\\
A_3&:=\left(K_2(\Delta X_t^\eta)-K_2(\Delta X_t^\zeta)\right)\Delta \partial^2_{y_1}f_t^\eta,\\
A_4&:=K_2(\Delta X_t^\zeta)\left(\Delta \partial^2_{y_1}f_t^\eta-\Delta \partial^2_{y_1}f_t^\zeta\right),
\end{align*}
cf. \eqref{eq:formula_for_derivative_y1}. Regarding $A_1$ we use \eqref{eq:contraction_aux1}, \eqref{eq:contraction_aux2} to deduce
\begin{align*}
\int_{\Omega_0}\abs{A_1}\:dz&\leq C\int_{\Omega_0}\int_0^1\frac{1}{|\Delta X_t^{\xi_\lambda}|_*^3}t^{1+\alpha}\abs{\Delta(\eta-\zeta)\Delta \partial_{y_1}f_t^\eta}\abs{ \Delta \partial_{y_1}f_t^\eta }\:dz\:d\lambda\\
&\leq \frac{C_0t^{1+\alpha}}{\rho-\rho'}\norm{\eta-\zeta}_{\rho'}\int_0^1\int_{\Omega_0}\frac{1}{|\Delta X_t^{\xi_\lambda}|_*}\:dz\:d\lambda\leq \frac{C_0t^{1+\alpha}\abs{\log t}}{\rho-\rho'}\norm{\eta-\zeta}_{\rho}.
\end{align*}
By making use of \eqref{eq:contraction_aux3} instead of \eqref{eq:contraction_aux2} one can bound $\int_{\Omega_0}\abs{A_3}\:dz$ in a similar way. We omit the details.

Next for $A_2$, inequality \eqref{eq:contraction_aux1} implies 
\begin{align*}
\int_{\Omega_0}\abs{A_2}\:dz&\leq C\int_{\Omega_0}\frac{1}{|\Delta X_t^{\zeta}|^2_*}\abs{\Delta \partial_{y_1}f_t^\eta+\Delta \partial_{y_1}f_t^\zeta}t^{1+\alpha}\abs{\Delta\partial_{y_1}\eta-\Delta\partial_{y_1}\zeta}\:dz\\
&\leq \frac{C_0t^{1+\alpha}\abs{\log t}}{\rho-\rho'}\norm{\eta-\zeta}_{\rho}.
\end{align*}
Finally, the estimate for $\int_{\Omega_0}\abs{A_4}\:dz$ is a straightforward consequence of Lemmas \ref{lem:good_sets_for_K_2}, \ref{lem:f_maps_into_good_set}, \ref{lem:estimate_on_integral} and the Cauchy Lemma \ref{lem:cauchy}.

Summarizing, we have shown
\begin{align*}
\norm{\partial_{y_1}F_t(\eta)-\partial_{y_1}F_t(\zeta)}_{L^\infty(\Omega_{\rho'})}\leq \frac{C_0t^{1+\alpha}\abs{\log t}}{\rho-\rho'}\norm{\eta-\zeta}_{\rho}.
\end{align*}

It therefore remains to check $\partial_{y_2}$. Again we write
\begin{align*}
\partial_{y_2}F_t(\eta)(y)-\partial_{y_2}F_t(\zeta)(y)=-\int_{\Omega_0}B_1+B_2+B_3+B_4\:dz,
\end{align*}
where, cf. \eqref{eq:formula_for_derivative_y2},
\begin{align*}
B_1&:=\left(\partial_{a_2}K_2(\Delta X_t^\eta)-\partial_{a_2}K_2(\Delta X_t^\zeta)\right)\left(t+\frac{1}{2}t^{1+\alpha}\partial_{y_2}\eta(y)\right)\Delta \partial_{y_1}f_t^\eta,\\
B_2&:=\partial_{a_2}K_2(\Delta X_t^\zeta)\left[\left(t+\frac{1}{2}t^{1+\alpha}\partial_{y_2}\eta(y)\right)\Delta \partial_{y_1}f_t^\eta-\left(t+\frac{1}{2}t^{1+\alpha}\partial_{y_2}\zeta(y)\right)\Delta \partial_{y_1}f_t^\zeta\right],\\
B_3&:=\left(K_2(\Delta X_t^\eta)-K_2(\Delta X_t^\zeta)\right)\frac{1}{2}t^{1+\alpha}\partial_{y_2}\partial_{y_1}\eta(y),\\
B_4&:=K_2(\Delta X_t^\zeta)\frac{1}{2}t^{1+\alpha}\left(\partial_{y_2}\partial_{y_1}\eta(y)-\partial_{y_2}\partial_{y_1}\zeta(y)\right).
\end{align*}

Since $t<1$ and $\abs{\partial_{y_2}\eta(y)}<1$ we get
\begin{align*}
\int_{\Omega_0}\abs{B_1}\:dz&\leq Ct^{1+\alpha}\int_0^1\int_{\Omega_0}\frac{1}{|\Delta X_t^{\xi_\lambda}|^3_*}t\abs{\Delta\eta-\Delta \zeta}\abs{\Delta\partial_{y_1}f_t^\eta}\:dz\:d\lambda\\
&\leq \frac{C_0t^{1+\alpha}\abs{\log t}}{\rho-\rho'}\norm{\eta-\zeta}_\rho
\end{align*}
by \eqref{eq:contraction_aux4}, \eqref{eq:contraction_aux1}.

Moreover, 
\begin{align*}
\int_{\Omega_0}\abs{B_2}\:dz&\leq C\int_{\Omega_0}\frac{1}{|\Delta X_t^\zeta|_*^2}\Big[t^{1+\alpha}\abs{\partial_{y_2}\eta(y)-\partial_{y_2}\zeta(y)}\abs{\Delta\partial_{y_1}f_t^\eta}\\
&\hspace{65pt}+\left(t+t^{1+\alpha}\abs{\partial_{y_2}\zeta(y)}\right)t^{1+\alpha}\abs{\Delta\partial_{y_1}\eta-\Delta\partial_{y_1}\zeta}\Big]\:dz\\
&\leq \frac{C_0t^{1+\alpha}\abs{\log t}}{\rho-\rho'}\norm{\eta-\zeta}_\rho
\end{align*}
by use of \eqref{eq:contraction_aux1} in the first term, as well as \eqref{eq:contraction_aux5} in the second.

The estimate for $\int_{\Omega_0}\abs{B_3}\:dz$ follows in analogy to $\int_{\Omega_0}\abs{B_1}\:dz$ by utilizing \eqref{eq:contraction_aux4} and Cauchy's Lemma \ref{lem:cauchy}, whereas the estimate for $\int_{\Omega_0}\abs{B_4}\:dz$ relies solely on Lemma \ref{lem:cauchy}.

This finishes the proof of Lemma \ref{lem:contraction}.
\end{proof}

\subsection{The affine term}\label{sec:affine_term}
In order to complete the list of ingredients of Theorem \ref{thm:abstract_CK}, we investigate $F_t(0)$. As usual we consider $\rho_0\in(0,1)$ to be fixed according to Lemma \ref{lem:initial_speed}, as well as $R=1$ and $T=T(R,C_0,\alpha)\in(0,1)$ given by Lemma \ref{lem:f_maps_into_good_set}.
\begin{lemma}\label{lem:estimate_for_F0}
For any $\rho\in(0,\rho_0)$, $t\in(0,T)$ there holds
\begin{align*}
\norm{F_t(0)}_\rho\leq C_0 t\abs{\log t}.
\end{align*}
\end{lemma}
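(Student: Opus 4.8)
The plan is to evaluate $F_t(0)$ directly from its definition, recalling that for $\eta\equiv 0$ one has $f_t^0(y)=\gamma_0(y_1)+ts_0(y_1)$, so that $\Delta\partial_{y_1}f_t^0(y,z)=\Delta\gamma_0'(y_1,z_1)+t\Delta s_0'(y_1,z_1)$, while $\Delta X_t^0(y,z)$ has second component $t(y_2-z_2)+\Delta\gamma_0(y_1,z_1)+t\Delta s_0(y_1,z_1)$. We then write
\begin{align*}
F_t(0)(y)=-\int_{\Omega_0}\Big(K_2(\Delta X_t^0)-K_2(\Delta X_0)\Big)\Delta\gamma_0'\:dz-\int_{\Omega_0}K_2(\Delta X_t^0)\big(t\Delta s_0'\big)\:dz,
\end{align*}
since $\Delta X_0(y_1,z_1)$ has vanishing $z_2$-integrand but $\Omega_0$ has $z_2$-width $4$, so $\int_{\Omega_0}K_2(\Delta X_0)\Delta\gamma_0'\,dz=4\int_\T K_2(\Delta X_0(y_1,z_1))\Delta\gamma_0'(y_1,z_1)\,dz_1$; one must be slightly careful here to match the normalization of $s_0$ in \eqref{eq:definition_of_feta_Xeta} with the one used in the definition of $F_t$, but this is a bookkeeping point. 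The second integral is immediately $O(t|\log t|)$ by Lemma \ref{lem:good_sets_for_K_2}, Lemma \ref{lem:f_maps_into_good_set} and Lemma \ref{lem:estimate_on_integral}, together with $|\Delta s_0'|\leq C_0|z_1|\leq C_0|\Delta X_t^0|_*$ from \eqref{eq:estimate_for_ty2-z2}: the $|\Delta X_t^0|_*$ from $\Delta s_0'$ cancels one power of $|\Delta X_t^0|_*^{-1}$ from the $-1$-order kernel, and integrating the remaining $|\Delta X_t^0|_*^{-1}$ gives $C_0|\log t|$, hence an overall $C_0 t|\log t|$.

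For the first integral I would use the fundamental theorem of calculus in $t$, writing $K_2(\Delta X_t^0)-K_2(\Delta X_0)=\int_0^t \partial_{a_2}K_2(\Delta X_\tau^0)\,\partial_\tau(\Delta X_{\tau,2}^0)\,d\tau$ with $\partial_\tau(\Delta X_{\tau,2}^0)=(y_2-z_2)+\Delta s_0(y_1,z_1)$, which by \eqref{eq:estimate_for_ty2-z2} (applied at time $\tau$, after possibly first integrating the $z_2$-variable out, since the $(y_2-z_2)$ factor alone is only $O(1)$ and one needs the $\tau$-weight) is bounded by $C_0\tau^{-1}|\Delta X_\tau^0|_*$; combined with $|\partial_{a_2}K_2(\Delta X_\tau^0)|\leq C|\Delta X_\tau^0|_*^{-2}$ and $|\Delta\gamma_0'|\leq C_0|z_1|\leq C_0|\Delta X_\tau^0|_*$ this yields an integrand bounded by $C_0\tau^{-1}|\Delta X_\tau^0|_*^{-1}$, and Lemma \ref{lem:estimate_on_integral} together with $\int_0^t \tau^{-1}\cdot$ ... — here one sees the subtlety: $\int_0^t |\log\tau|\,d\tau/\tau$ diverges. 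So instead the cleaner route is to keep the $t$-difference estimate uniform: bound $|K_2(\Delta X_t^0)-K_2(\Delta X_0)|$ pointwise by $Ct\,(|y_2-z_2|+|\Delta s_0|)\,\sup_{\tau\in[0,t]}|\Delta X_\tau^0|_*^{-2}$ and, using that $|\Delta X_\tau^0|_*\gtrsim |z_1|$ uniformly (from \eqref{eq:estimate_for_ty2-z2}, which gives $|\Delta X_\tau^0|_*\geq c(|z_1|+\tau|y_2-z_2|)\geq c|z_1|$), estimate the whole thing by $C_0 t\int_{\Omega_0}\frac{|z_1|+|\Delta s_0|}{(|z_1|+t|y_2-z_2|)^2}\,dz$, which after the $\Delta\gamma_0'$ factor contributes another $|z_1|$ and reduces to $C_0 t\int_{\Omega_0}|\Delta X_t^0|_*^{-1}dz\leq C_0 t|\log t|$ by Lemma \ref{lem:estimate_on_integral}.

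The derivatives $\partial_{y_1}F_t(0)$ and $\partial_{y_2}F_t(0)$ are handled in exactly the same fashion: one differentiates under the integral sign using formulas \eqref{eq:formula_for_derivative_y1}, \eqref{eq:formula_for_derivative_y2} specialized to $\eta=0$ (so the $t^{1+\alpha}$-terms there vanish, which only helps), takes the $t$-difference with the corresponding $\eta=0$, $t=0$ expression, and applies the same combination of Lemma \ref{lem:good_sets_for_K_2}, \eqref{eq:estimate_for_ty2-z2} and Lemma \ref{lem:estimate_on_integral}; the extra factor $t$ present in \eqref{eq:formula_for_derivative_y2} is exactly what keeps the $\partial_{y_2}$ estimate of the same order $C_0 t|\log t|$ rather than worse, as in the proof of Lemma \ref{lem:F_welldef}. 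Summing the three contributions gives $\norm{F_t(0)}_\rho\leq C_0 t|\log t|$, uniformly in $\rho\in(0,\rho_0)$ (note no $(\rho_0-\rho)^{-1}$ blow-up appears since all kernel estimates only use $|\im(a_2)|$-control from Lemma \ref{lem:f_maps_into_good_set}, not Cauchy's lemma). \textbf{The main obstacle} I expect is precisely the one flagged above: naively expanding the $t$-difference via $\int_0^t\partial_\tau$ produces a $\int_0^t\tau^{-1}|\log\tau|\,d\tau$-type divergence, so one must resist the temptation to use the FTC in $\tau$ and instead estimate the difference $K_2(\Delta X_t^0)-K_2(\Delta X_0)$ by a single mean-value bound with the $t$-factor pulled outside and the denominator controlled uniformly by $|z_1|$ (equivalently by $|\Delta X_t^0|_*$ via \eqref{eq:estimate_for_ty2-z2}), after which Lemma \ref{lem:estimate_on_integral} closes everything.
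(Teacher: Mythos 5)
Your decomposition of $F_t(0)$ into the difference term $\big(K_2(\Delta X_t^0)-K_2(\Delta X_0)\big)\Delta\gamma_0'$ plus the term $t\,K_2(\Delta X_t^0)\Delta s_0'$ is exactly the paper's starting point, and your treatment of the second term and of the derivatives is fine. The gap is in your ``cleaner route'' for the difference term. You bound $\abs{K_2(\Delta X_t^0)-K_2(\Delta X_0)}$ by $Ct\,(\abs{y_2-z_2}+\abs{\Delta s_0})\sup_{\tau\in[0,t]}\abs{\Delta X_\tau^0}_*^{-2}$ and then replace the sup by $C(\abs{z_1}+t\abs{y_2-z_2})^{-2}$. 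That replacement is false: the uniform-in-$\tau$ lower bound coming from \eqref{eq:estimate_for_ty2-z2} is only $\abs{\Delta X_\tau^0}_*\geq c(\abs{z_1}+\tau\abs{y_2-z_2})$, whose infimum over $\tau\in[0,t]$ is attained near $\tau=0$ and gives just $c\abs{z_1}$; indeed for $z_1\to 0$ and $y_2\neq z_2$ one has $\abs{\Delta X_0}_*\to 0$ while $\abs{z_1}+t\abs{y_2-z_2}$ stays bounded away from $0$. With the correct bound $\sup_\tau\abs{\Delta X_\tau^0}_*^{-2}\leq C\abs{z_1}^{-2}$ your integrand becomes of order $t\abs{z_1}^{-1}$ after the $\Delta\gamma_0'$ cancellation, and $\int_{\T}\abs{z_1}^{-1}dz_1$ diverges, so the estimate does not close.

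The fix is precisely the route you talked yourself out of. The divergence you feared in the fundamental-theorem-of-calculus approach is self-inflicted: there is no need to bound $\abs{y_2-z_2}$ through \eqref{eq:estimate_for_ty2-z2} (which is what produced the $\tau^{-1}$), since $\abs{y_2-z_2}\leq 4$ and $\abs{\Delta s_0}\leq C_0$ on $\Omega_0$. Writing $K_2(\Delta X_t^0)-K_2(\Delta X_0)=\int_0^1\partial_{a_2}K_2(\Delta X_{\lambda t}^0)\,t\,(y_2-z_2+\Delta s_0)\,d\lambda$, the factor $t$ comes out for free, $\abs{\Delta\gamma_0'}\leq C_0\abs{z_1}\leq C_0\abs{\Delta X_{\lambda t}^0}_*$ cancels one power of the order-$(-2)$ kernel bound, and the remaining $\abs{\Delta X_{\lambda t}^0}_*^{-1}$ is integrated \emph{at each fixed $\lambda$} via Lemma \ref{lem:estimate_on_integral}, giving $C_0\abs{\log(\lambda t)}$; then $\int_0^1\abs{\log(\lambda t)}\,d\lambda\leq \abs{\log t}+1$, so the whole term is $C_0t\abs{\log t}$. (Equivalently, in your $\tau$-variable, the bound is $C_0\int_0^t\abs{\log\tau}\,d\tau\leq C_0t\abs{\log t}$, which converges.) This is exactly the paper's proof; the point is to keep the $\lambda$- (or $\tau$-) integral so that the $z_2$-regularization at time $\lambda t$ is used, rather than taking a sup over $\tau$, which destroys it.
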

\begin{proof}
Let $y\in \Omega_\rho$. Recall that 
\begin{align*}
\Delta X_t^0=\begin{pmatrix}
z_1\\
\Delta\gamma_0+t\Delta s_0+t(y_2-z_2)
\end{pmatrix}
=\Delta X_0+\begin{pmatrix}
0\\t\Delta s_0+t(y_2-z_2)
\end{pmatrix},
\end{align*}
 $z\in\Omega_0$. In view of Lemmas \ref{lem:good_sets_for_K_2}, \ref{lem:f_maps_into_good_set}, \ref{lem:estimate_on_integral} and the boundedness of $\Omega_0$ there holds
\begin{align*}
\abs{F_t(0)(y)}&\leq \int_{\Omega_0}\abs{K_2(\Delta X_t^0)-K_2(\Delta X_0)}\abs{\Delta\gamma_0'}+t\abs{K_2(\Delta X_t^0)}\abs{\Delta s_0'}\:dz\\
&\leq \int_0^1\int_{\Omega_0}\abs{\partial_{a_2}K_2(\Delta X_{\lambda t}^0)t(y_2-z_2+\Delta s_0)}\abs{\Delta \gamma_0'}\:dz \:d\lambda+C_0t\\
&\leq C_0t\left(1+\int_0^1\int_{\Omega_0}\frac{1}{\abs{\Delta X_{\lambda t}^0}_*}\:dz\:d\lambda\right)\leq C_0t\left(1+\int_0^1\abs{\log(\lambda t)}\:d\lambda\right)\\
&\leq C_0 t\abs{\log t}.
\end{align*}

Next we estimate $\partial_{y_1}F_t(0)$. One has
\begin{align*}
&\abs{\partial_{y_1}F_t(0)(y)}\leq \int_{\Omega_0}\abs{\partial_{a_2}K_2(\Delta X_t^0)-\partial_{a_2}K_2(\Delta X_0)}\abs{\Delta \gamma_0'}^2+\abs{K_2(\Delta X_t^0)}t\abs{\Delta s_0''}\\
&\hspace{25pt}+\abs{K_2(\Delta X_t^0)-K_2(\Delta X_0)}\abs{\Delta \gamma_0''}+\abs{\partial_{a_2}K_2(\Delta X_t^0)}(2t\Delta \gamma_0'\Delta s_0'+t^2\abs{\Delta s_0'}^2)\:dz.
\end{align*}
The terms appearing on the right-hand side can be dealt with in a similar way as above.

Finally, we also state
\begin{align*}
\abs{\partial_{y_2}F_t(0)(y)}&\leq \int_{\Omega_0}\abs{\partial_{a_2}K_2(\Delta X_t^0)}t\abs{\Delta\gamma_0'+t\Delta s_0'}\:dz\leq C_0t\abs{\log t}.
\end{align*}
This finishes the proof of Lemma \ref{lem:estimate_for_F0}.
\end{proof}

\begin{remark}
Note that Lemma \ref{lem:estimate_for_F0} implies
\begin{align*}
\frac{1}{t^{1+\alpha}}\int_0^t \norm{F_s(0)}_\rho\:ds\leq C_0t^{1-\alpha}\abs{\log t}\leq \frac{C_0}{\rho_0-\rho}t^{1-\alpha}\abs{\log t},
\end{align*}
i.e. assumption \ref{cond:F_0_well_defined} of Theorem \ref{thm:abstract_CK} holds with $a(t)=t^{1+\alpha}$, $c(t)=C_0t^{1-\alpha}\abs{\log t}$.
\end{remark}

\subsection{Conclusion and additional remarks}\label{sec:conclusion_of_existence}

In Sections \ref{sec:banach_spaces}-\ref{sec:affine_term} we have verified all conditions of Theorem \ref{thm:abstract_CK}. As a consequence we deduce the following statement.
\begin{proposition}\label{prop:conclusion1} Let $\rho_0>0$ be as in Lemma \ref{lem:initial_speed}. There exists $\bar{a}=\bar{a}(C_0)>0$, $T=T(C_0,\alpha)>0$ and a unique function $t\mapsto \eta_t$ with the properties that for every $\rho\in(0,\rho_0)$ the map 
    \[
    I_\rho:=\set{t\in[0,T):C_0t^{1-\alpha}\abs{\log t}<\bar{a}(\rho_0-\rho)}\ni t\mapsto\eta_t\in B_\rho
    \]
    is continuous with $\norm{\eta_t}_\rho<1$, $t\in I_\rho$, and such that 
    for all $y\in\Omega_{\rho}$, $t\in I_\rho$ there holds \begin{equation}\label{eq:equation_for_eta_in_conclusion}
\eta_0(y)=0,\quad \eta_t(y)=\frac{1}{t^{1+\alpha}}\int_0^tF_s(\eta_s)(y)\:ds.
\end{equation}
\end{proposition}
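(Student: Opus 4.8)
The plan is to read Proposition~\ref{prop:conclusion1} directly off Theorem~\ref{thm:abstract_CK}, applied to the scale $(B_\rho)_{\rho\in(0,\rho_0)}$ of Section~\ref{sec:banach_spaces}, to the map $F$ of Section~\ref{sec:def_of_F}, and to the functions
\[
a(t):=t^{1+\alpha},\qquad b(t):=C_0t^{1+\alpha}\abs{\log t},\qquad c(t):=C_0t^{1-\alpha}\abs{\log t}.
\]
First I would fix $\rho_0\in(0,1)$ as in Lemma~\ref{lem:initial_speed}, take $R=1$, and then choose $T=T(C_0,\alpha)\in(0,1)$ small enough to be simultaneously admissible in Lemma~\ref{lem:f_maps_into_good_set} and in Lemma~\ref{lem:choices_of_abc} (in particular so that $(1-\alpha)\abs{\log t}>1$ on $(0,T)$). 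The monotonicity $\norm{\cdot}_{\rho'}\leq\norm{\cdot}_\rho$ for $\rho'<\rho$ is recorded in Section~\ref{sec:banach_spaces}. Assumption~\ref{cond:F_well_defined} then follows by combining Lemma~\ref{lem:F_welldef} (well-definedness of $F\colon\set{\eta\in B_\rho:\norm{\eta}_\rho<1}\times[0,T)\to B_{\rho'}$ together with continuity in $t$) with the Lipschitz bound $\norm{F_t(\eta)-F_t(\zeta)}_{\rho'}\leq\frac{C_0t^{1+\alpha}\abs{\log t}}{\rho-\rho'}\norm{\eta-\zeta}_\rho$ of Lemma~\ref{lem:contraction}, which upgrades continuity in $t$ to joint continuity in $(\eta,t)$; that same bound is exactly assumption~\ref{cond:F_lipschitz} with the stated $b(t)$.

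Next I would dispatch assumption~\ref{cond:F_0_well_defined} via Lemma~\ref{lem:estimate_for_F0}: from $\norm{F_t(0)}_\rho\leq C_0t\abs{\log t}$ and integrability of $t\mapsto t\abs{\log t}$ near $0$ one gets $F(0,\cdot)\in L^1(0,T;B_\rho)$ for every $\rho$, and (as in the remark after that lemma, using $\rho_0<1$)
\[
\frac{1}{a(t)}\int_0^t\norm{F_s(0)}_\rho\:ds\leq C_0t^{1-\alpha}\abs{\log t}=c(t)\leq\frac{c(t)}{\rho_0-\rho}.
\]
It remains to observe that $c(t)=C_0t^{1-\alpha}\abs{\log t}$ is continuous on $[0,T)$ with $c(0)=0$, is $\cC^1$ on $(0,T)$, and $c'(t)=C_0t^{-\alpha}\big((1-\alpha)\abs{\log t}-1\big)>0$ there by the choice of $T$. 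Finally, assumption~\ref{cond:relation_between_functions}, i.e.\ \eqref{eq:relation_between_functions_abc} with a constant $K=K(C_0)$, is exactly Lemma~\ref{lem:choices_of_abc} with $c_1,c_2$ the constants $C_0$ appearing in $b$ and $c$ (replace both by their maximum, so $K$ still depends only on $\gamma_0$); this also pins down the $T=T(\alpha)$ of that lemma, which we absorb into our earlier choice of $T$.

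Having verified \ref{cond:F_well_defined}--\ref{cond:relation_between_functions}, Theorem~\ref{thm:abstract_CK} produces $\bar a=\bar a(K,R)=\bar a(C_0)>0$ and a unique $u(t)$ which for every $\rho\in(0,\rho_0)$ maps $\set{t\in[0,T):c(t)<\bar a(\rho_0-\rho)}=I_\rho$ continuously into the open unit ball of $B_\rho$, satisfies $u(t)=\frac{1}{a(t)}\int_0^tF_s(u(s))\:ds=\frac{1}{t^{1+\alpha}}\int_0^tF_s(u(s))\:ds$, and has $u(0)=0$. Setting $\eta_t:=u(t)$ and evaluating this $B_\rho$-valued identity at a point $y\in\Omega_\rho$ — legitimate since $\norm{\cdot}_\rho$ dominates $\norm{\cdot}_{L^\infty(\Omega_\rho)}$ — gives \eqref{eq:equation_for_eta_in_conclusion}, while $\norm{\eta_t}_\rho<1$ on $I_\rho$ and the uniqueness of $t\mapsto\eta_t$ with the stated properties are inherited verbatim from Theorem~\ref{thm:abstract_CK}. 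I expect no genuine difficulty at this stage: all the analytic content lives in Lemmas~\ref{lem:contraction} and~\ref{lem:estimate_for_F0}, and the only thing that needs real care is the bookkeeping of constants, namely ensuring that a single $\bar a$ and a single $T$, depending only on $\alpha$ and on $\gamma_0$ through $C_0$, work for the whole scale $(B_\rho)_{\rho\in(0,\rho_0)}$ at once.
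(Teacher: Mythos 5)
Your proposal is correct and follows essentially the same route as the paper: the paper's own argument is precisely that Sections \ref{sec:banach_spaces}--\ref{sec:affine_term} verify assumptions \ref{cond:F_well_defined}--\ref{cond:relation_between_functions} of Theorem \ref{thm:abstract_CK} with $a(t)=t^{1+\alpha}$, $b(t)=C_0t^{1+\alpha}\abs{\log t}$, $c(t)=C_0t^{1-\alpha}\abs{\log t}$ (via Lemmas \ref{lem:F_welldef}, \ref{lem:contraction}, \ref{lem:estimate_for_F0} and Lemma \ref{lem:choices_of_abc}), and then the Proposition is read off directly, including the same remark that joint continuity in \ref{cond:F_well_defined} comes from combining Lemma \ref{lem:F_welldef} with the Lipschitz bound. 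Your bookkeeping of $R=1$, $\rho_0<1$, and the dependence of $\bar a$, $T$ on $C_0$ and $\alpha$ matches the paper's.
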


We finish the investigation of the fixed-point problem \eqref{eq:fixed_point_equation_eta} with some accompanying remarks concerning properties of the solution $\eta_t$ given by Proposition \ref{prop:conclusion1}.

The first addresses regularity. In contrast to the analyticity of $\eta_t$ in $y_1$, we only know that $\eta_t$ is continuously differentiable in $y_2$. Using \eqref{eq:equation_for_eta_in_conclusion} it seems possible to upgrade the regularity with respect to $y_2$. However, since $F_s(\eta_s)(y)$ involves the integration over the finite interval $(-2,2)$ with respect to $z_2$, in contrast to $\T$ for the integration in $z_1$, the maximal regularity for $\eta_t(y_1,\cdot):[-2,2]\rightarrow\C$ is expected to be finite. In any case, since a higher regularity of $\eta_t$ with respect to $y_2$ would only improve the regularity of our subsolution inside the mixing zone and not across its boundary, we have not pursued this topic any further.

Next we turn to the role of the parameter $\alpha$. Suppose that we set up problem \eqref{eq:fixed_point_equation_eta} with respect to two different choices $0<\alpha<\beta<1$ leading to two different right-handsides involving $F_t^\alpha(\eta)$, $F_t^\beta(\eta)$. Our previous analysis gives two solutions $\eta_t^\alpha$, $\eta_t^\beta$ with corresponding intervals $I_\rho^\alpha\subset [0,T^\alpha)$, $I_\rho^\beta\subset[0,T^\beta)$, $\rho\in(0,\rho_0)$. Note that the intervals $I^\alpha_\rho$, $I^\beta_\rho$ are defined with the same $\bar{a}$ and recall that $T^\alpha,T^\beta\in(0,1)$.
\begin{lemma}\label{lem:independence_of_alpha} 
There holds $t^{\beta-\alpha}\eta_t^\beta=\eta^\alpha_t$ on $[0,\min\set{T^\alpha,T^\beta})$.
\end{lemma}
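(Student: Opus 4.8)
\emph{Proof strategy.} The plan is to recognize $\eta^\alpha$ and $\eta^\beta$ as two parametrizations, via the ansatz \eqref{eq:ansatz_for_f}, of one and the same level--set function $f$. If $t^{1+\alpha}\eta_t^\alpha=t^{1+\beta}\eta_t^\beta$, then the two ansätze produce identical functions $f_t^\eta$ and $X_t^\eta$ from \eqref{eq:definition_of_feta_Xeta}, and hence identical right--hand sides. So I would set $\tilde\eta_t:=t^{\beta-\alpha}\eta_t^\beta$, show that it solves the fixed--point problem \eqref{eq:equation_for_eta_in_conclusion} associated with the exponent $\alpha$, and conclude by the uniqueness part of Theorem \ref{thm:abstract_CK} (equivalently Proposition \ref{prop:conclusion1}).

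\emph{Admissibility of the competitor.} First I would fix $\rho\in(0,\rho_0)$ and note that on $[0,\min\{T^\alpha,T^\beta\})$ one has $I_\rho^\beta\subset I_\rho^\alpha$: since $t<1$ and $0<\alpha<\beta<1$ give $t^{1-\alpha}\le t^{1-\beta}$, the functions $c^\alpha(t)=C_0 t^{1-\alpha}\abs{\log t}\le C_0 t^{1-\beta}\abs{\log t}=c^\beta(t)$ satisfy the inclusion of sublevel sets defining the two intervals. Set $J_\rho:=I_\rho^\beta\cap[0,\min\{T^\alpha,T^\beta\})$. On $J_\rho$ the curve $t\mapsto\tilde\eta_t$ is continuous into $B_\rho$, vanishes at $t=0$ (because $\norm{\tilde\eta_t}_\rho\le\norm{\eta_t^\beta}_\rho\to0$), and obeys $\norm{\tilde\eta_t}_\rho=t^{\beta-\alpha}\norm{\eta_t^\beta}_\rho<\norm{\eta_t^\beta}_\rho<1$ since $t^{\beta-\alpha}<1$. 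Thus $\tilde\eta$ is an admissible competitor for the $\alpha$--problem on $J_\rho$.

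\emph{The algebraic identity.} The heart of the argument is that $F_s^\alpha(\tilde\eta_s)=F_s^\beta(\eta_s^\beta)$ for $s\in J_\rho$. Writing $f_s^{\eta,\alpha}(y)=\gamma_0(y_1)+s\,s_0(y_1)+\tfrac12 s^{1+\alpha}\eta(y)$ (and likewise with $\beta$), one computes
\[
f_s^{\tilde\eta_s,\alpha}(y)=\gamma_0(y_1)+s\,s_0(y_1)+\tfrac12\, s^{1+\alpha}\, s^{\beta-\alpha}\eta_s^\beta(y)=\gamma_0(y_1)+s\,s_0(y_1)+\tfrac12\, s^{1+\beta}\eta_s^\beta(y)=f_s^{\eta_s^\beta,\beta}(y),
\]
whence also $X_s^{\tilde\eta_s,\alpha}=X_s^{\eta_s^\beta,\beta}$ and $\partial_{y_1}f_s^{\tilde\eta_s,\alpha}=\partial_{y_1}f_s^{\eta_s^\beta,\beta}$; since the exponent enters the definition of $F^\alpha$, resp. $F^\beta$, only through $f_t^\eta$ and $X_t^\eta$, the identity follows. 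Inserting it and using that $\eta^\beta$ solves its own fixed--point equation, $\eta_t^\beta=t^{-(1+\beta)}\int_0^t F_s^\beta(\eta_s^\beta)\,ds$, we get
\[
\tilde\eta_t=t^{\beta-\alpha}\eta_t^\beta=\frac{1}{t^{1+\alpha}}\int_0^t F_s^\beta(\eta_s^\beta)\,ds=\frac{1}{t^{1+\alpha}}\int_0^t F_s^\alpha(\tilde\eta_s)\,ds,\qquad t\in J_\rho,
\]
so $\tilde\eta$ solves \eqref{eq:equation_for_eta_in_conclusion} with exponent $\alpha$ on $J_\rho$.

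\emph{Conclusion and the main obstacle.} On $J_\rho$ both $\tilde\eta$ and the restriction of $\eta^\alpha$ are continuous curves into the open unit ball of $B_\rho$ solving the same integral equation, hence they coincide there by uniqueness. Letting $\rho\downarrow0$, the sets $J_\rho$ increase and exhaust the subset of $[0,\min\{T^\alpha,T^\beta\})$ on which $\eta^\beta$ (hence $\tilde\eta$) is defined, which yields $t^{\beta-\alpha}\eta_t^\beta=\eta_t^\alpha$ wherever both sides make sense. The one point that needs care is that uniqueness is applied on the subinterval $J_\rho\subset I_\rho^\alpha$ rather than on $I_\rho^\alpha$ itself; this is legitimate because the uniqueness statement of Theorem \ref{thm:abstract_CK} is proved by a contraction/Gronwall estimate (Appendix \ref{sec:CK_proof}) which applies on any interval on which a solution with the required mapping properties is prescribed. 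Everything else is bookkeeping.
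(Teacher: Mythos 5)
Your proposal is correct and follows essentially the same route as the paper: you substitute $\tilde\eta_t=t^{\beta-\alpha}\eta_t^\beta$, use the scaling identity $F_s^\alpha(s^{\beta-\alpha}\eta_s^\beta)=F_s^\beta(\eta_s^\beta)$ coming from $f_s^{\tilde\eta_s,\alpha}=f_s^{\eta_s^\beta,\beta}$, verify admissibility on $J_\rho^\beta\subset J_\rho^\alpha$, and invoke the uniqueness of Proposition \ref{prop:conclusion1}. Your extra care about applying uniqueness on the subinterval is a fair point but not a departure from the paper's argument.
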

\begin{proof} Define $J_\rho^\alpha:=I_\rho^\alpha\cap[0,T^\beta)$, $J_\rho^\beta:=I_\rho^\beta\cap[0,T^\alpha)$. Then $J^\beta_\rho\subset J^\alpha_\rho$ due to the fact that $\beta>\alpha$ and $t<1$. Both functions $t^{\beta-\alpha}\eta_t^\beta$, $\eta_t^\alpha$ are continuous maps from $J^\beta_\rho$ into the unit ball of $B_\rho$, $\rho\in(0,\rho_0)$, and they both vanish at $t=0$. Moreover, it is easy to check that 
\begin{align*}
    t^{\beta-\alpha}\eta_t^\beta(y)=t^{\beta-\alpha}\frac{1}{t^{1+\beta}}\int_0^tF_s^\beta(\eta_s^\beta)(y)\:ds=\frac{1}{t^{1+\alpha}}\int_0^tF_s^\alpha(s^{\beta-\alpha}\eta^\beta_s)(y)\:ds.
    \end{align*}
Thus Proposition \ref{prop:conclusion1} implies $t^{\beta-\alpha}\eta_t^\beta=\eta_t^\alpha$ for as long as both are defined.
\end{proof}
Both solutions $t^{\beta-\alpha}\eta^\beta_t$, $\eta_t^\alpha$ of \eqref{eq:equation_for_eta_in_conclusion} then extend uniquely to a common maximal solution of \eqref{eq:equation_for_eta_in_conclusion} enjoying the properties of Proposition \ref{prop:conclusion1}. Moreover, Lemma \ref{lem:independence_of_alpha} shows that $t^{1+\alpha}\eta^\alpha_t$ is independent of the considered $\alpha\in(0,1)$, such that the induced function $f(t,y)$ defined in Section \ref{sec:justification} below, is independent of $\alpha\in(0,1)$.

Finally we remark that for the choice $\alpha=1$ in ansatz \eqref{eq:ansatz_for_f} a more careful analysis would have been required. In that case the initial value $\eta_0(y)$ is not expected to be given by $0$ and the estimate given in Lemma \ref{lem:estimate_for_F0} does not even lead to boundedness of $t^{-2}\int_0^t\norm{F_s(0)}_\rho\:ds$. However, since this analysis has not been needed in order to prove existence of a Lipschitz solution of \eqref{eq:subequation}, we leave the case $\alpha=1$ as a possible future improvement.

\section{Justification of ansatzes}\label{sec:justification}

We will now verify that $\eta$ provided by Proposition \ref{prop:conclusion1} induces when undoing the transformations stated in Section \ref{sec:level_set_formulation} indeed a solution of the macroscopic IPM system \eqref{eq:subequation}.

Given $\eta$ from Proposition \ref{prop:conclusion1} we first of all define  $f:[0,T)\times\overline{\Omega}_{0}\rightarrow \R$, 
\[
f(t,y):=f_t^{\eta_t}(y)=\gamma_0(y_1)+ts_0(y_1)+\frac{1}{2}t^{1+\alpha}\eta_t(y),
\]
where $T=T(C_0,\alpha)>0$ can be taken as the endpoint of the interval $I_{\rho_0/2}$ for instance. Also recall Lemma \ref{lem:real_C1} if needed for the extension to the closure of $\Omega_0$.
\begin{lemma}\label{lem:back_to_the_real_case} There holds $f\in\cC^1([0,T);\cC^{1}(\overline{\Omega}_0))$  with
\begin{equation}\label{eq:bound_on_partialy2_f}
    \norm{\partial_{y_2}f(t,\cdot)}_{L^\infty(\Omega_0)}\leq \frac{1}{2}t^{1+\alpha},~t\in(0,T).
\end{equation}
Moreover, the functions $f(t,\cdot,y_2)$, $\partial_tf(t,\cdot,y_2)$, $\partial_{y_2}f(t,\cdot,y_2)$, $t\in[0,T)$, $y_2\in[-2,2]$ are real analytic and $f$ satisfies the initial value problem $f(0,y)=\gamma_0(y_1)$,
\begin{align}\label{eq:equation_for_f_after_conclusion}
\partial_tf(t,y)=-\frac{1}{2}\int_{\Omega_0}K_2(\Delta X_t^{\eta_t}(y,z))(\partial_{y_1}f(t,y)-\partial_{y_1}f(t,y_1-z_1,z_2))\:dz
\end{align}
for $t\in[0,T)$, $y\in\overline{\Omega}_0$.
\end{lemma}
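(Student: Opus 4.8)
The plan is to transfer every assertion about $f$ from the corresponding property of the fixed point $\eta_t$ supplied by Proposition \ref{prop:conclusion1}, using only the identity \eqref{eq:equation_for_eta_in_conclusion}, the analytic structure of the spaces $B_\rho$, and the regularity of $F$ from Lemmas \ref{lem:F_welldef} and \ref{lem:contraction}. Throughout I would fix $\rho:=\rho_0/2$ and an arbitrary $\rho'\in(0,\rho)$; with $T$ the endpoint of $I_{\rho_0/2}$ one has $[0,T)\subseteq I_{\rho_0/2}$, so $\eta_t\in B_\rho$ with $\norm{\eta_t}_\rho<1$ for all $t\in[0,T)$, and hence $F_t(\eta_t)\in B_{\rho'}$.

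For the spatial regularity and the bound \eqref{eq:bound_on_partialy2_f} I would invoke Lemma \ref{lem:real_C1}: each $\eta_t$ extends to $\cC^1(\overline{\Omega}_0)$, with $y_1$-slices of $\eta_t$ and of $\partial_{y_2}\eta_t$ real analytic and $\norm{\eta_t}_{\cC^1(\overline\Omega_0)}\leq\norm{\eta_t}_\rho$. Since $\gamma_0$ and $s_0$ (Lemma \ref{lem:initial_speed}) are real analytic and depend only on $y_1$, the same regularity passes to $f(t,\cdot)=\gamma_0+ts_0+\tfrac12 t^{1+\alpha}\eta_t$, and $\partial_{y_2}f(t,\cdot)=\tfrac12 t^{1+\alpha}\partial_{y_2}\eta_t$ yields $\norm{\partial_{y_2}f(t,\cdot)}_{L^\infty(\Omega_0)}\leq\tfrac12 t^{1+\alpha}\norm{\eta_t}_\rho\leq\tfrac12 t^{1+\alpha}$, which is \eqref{eq:bound_on_partialy2_f}.

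For the time regularity I would argue as follows. Lemma \ref{lem:F_welldef} together with the Lipschitz estimate of Lemma \ref{lem:contraction} gives that $(\eta,s)\mapsto F_s(\eta)$ is jointly continuous from $\set{\norm{\eta}_\rho<1}\times[0,T)$ into $B_{\rho'}$, while $s\mapsto\eta_s$ is continuous into $B_\rho$ by Proposition \ref{prop:conclusion1}; hence $s\mapsto F_s(\eta_s)$ is continuous into $B_{\rho'}$ and vanishes at $s=0$ since $F_0\equiv0$. Consequently the Bochner integral $t\mapsto\int_0^tF_s(\eta_s)\,ds$ lies in $\cC^1([0,T);B_{\rho'})$ with derivative $F_t(\eta_t)$, and evaluating \eqref{eq:equation_for_eta_in_conclusion} pointwise in $y$ identifies this integral with $t^{1+\alpha}\eta_t$. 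Adding $ts_0$, which is smooth in $t$, then gives $f\in\cC^1([0,T);\cC^1(\overline\Omega_0))$ with
\[
\partial_t f(t,\cdot)=s_0+\tfrac12 F_t(\eta_t),
\]
whose $y_1$-slices are real analytic because $F_t(\eta_t)\in B_{\rho'}$; the initial condition $f(0,\cdot)=\gamma_0$ and the value $\partial_t f(0,\cdot)=s_0$ follow from $\eta_0=0$ and continuity.

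Finally I would match $s_0+\tfrac12 F_t(\eta_t)$ with the right-hand side of \eqref{eq:equation_for_f_after_conclusion}: by the formula for $s_0$ in Lemma \ref{lem:initial_speed} (and $\int_{-2}^2dz_2=4$) one has $\int_{\Omega_0}K_2(\Delta X_0(y_1,z_1))\Delta\gamma_0'(y_1,z_1)\,dz=-2s_0(y_1)$, so by the definition of $F_t$ and of $f_t^\eta,X_t^\eta$ in \eqref{eq:definition_of_feta_Xeta} we obtain $F_t(\eta_t)(y)=-\int_{\Omega_0}K_2(\Delta X_t^{\eta_t}(y,z))\Delta\partial_{y_1}f_t^{\eta_t}(y,z)\,dz-2s_0(y_1)$, whence $s_0(y_1)+\tfrac12 F_t(\eta_t)(y)$ is exactly $-\tfrac12\int_{\Omega_0}K_2(\Delta X_t^{\eta_t})\Delta\partial_{y_1}f_t^{\eta_t}\,dz$; recalling $\Delta\partial_{y_1}f_t^{\eta_t}(y,z)=\partial_{y_1}f(t,y)-\partial_{y_1}f(t,y_1-z_1,z_2)$ and extending from $\Omega_0$ to $\overline\Omega_0$ by continuity yields \eqref{eq:equation_for_f_after_conclusion}. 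The main obstacle I anticipate is the continuity of $t\mapsto F_t(\eta_t)$ up to $t=0$, since $\eta_t$ and its derivatives are only controlled by the degenerate quantity $t^{1-\alpha}\abs{\log t}$; this is overcome by the splitting $\norm{F_t(\eta_t)-F_0(\eta_0)}_{\rho'}\leq\frac{C_0t^{1+\alpha}\abs{\log t}}{\rho-\rho'}\norm{\eta_t}_\rho+\norm{F_t(\eta_0)}_{\rho'}$ and the continuity-at-$t_0=0$ statement of Lemma \ref{lem:F_welldef}.
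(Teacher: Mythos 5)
Your proposal is correct and follows essentially the same route as the paper, which disposes of this lemma in two lines as a "direct consequence" of Lemma \ref{lem:F_welldef}, Proposition \ref{prop:conclusion1}, the definition of the spaces $B_\rho$ and Lemma \ref{lem:real_C1}; you have simply written out the details it leaves implicit (differentiating the Bochner integral $\int_0^t F_s(\eta_s)\,ds$, the cancellation $\int_{\Omega_0}K_2(\Delta X_0)\Delta\gamma_0'\,dz=-2s_0$, and the continuity of $t\mapsto F_t(\eta_t)$ at $t=0$ via the splitting with Lemmas \ref{lem:contraction} and \ref{lem:estimate_for_F0}). All of these computations check out.
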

\begin{proof}
As a direct consequence of Lemma \ref{lem:F_welldef} and Proposition \ref{prop:conclusion1} one sees that $f$ belongs to $\cC^1([0,T);B_{\rho_0/2})$ and satisfies \eqref{eq:bound_on_partialy2_f}, \eqref{eq:equation_for_f_after_conclusion} for $t\in[0,T)$, $y\in\Omega_{\rho_0/2}$. The statement follows from the definition of the spaces $B_\rho$ and Lemma \ref{lem:real_C1}.
\end{proof}

We are now able to prove our main result.
\begin{proof}[Proof of Theorem \ref{thm:main}] Let $f$ be as in Lemma \ref{lem:back_to_the_real_case}. Define the open space-time set 
\[
\mathscr{U}:=\set{(t,x)\in(0,T)\times\T\times\R:-2t+f(t,x_1,-2)<x_2<2t+f(t,x_1,2)},
\]
as well as the slices 
\[
\mathscr{U}_t:=\set{x\in\T\times\R:(t,x)\in \mathscr{U}},~t\in(0,T).
\]
As a consequence of \eqref{eq:bound_on_partialy2_f} the maps $X_t:\Omega_0\rightarrow \mathscr{U}_t$,
\[
X_t(y):=\begin{pmatrix}
y_1\\ty_2+f(t,y)
\end{pmatrix}, ~t\in(0,T)
\]
are $\cC^1$ diffeomorphisms with the property that also the joint maps $(0,T)\times\Omega_0\rightarrow \T\times\R$, $(t,y)\mapsto X_t(y)$ and $\mathscr{U}\rightarrow\T\times\R$, $(t,x)\mapsto X_t^{-1}(x)$ are of class $\cC^1$.

In view of \eqref{eq:ansatz} we thus can indeed define the density $\rho:[0,T)\times\T\times\R\rightarrow\R$ by setting
\begin{align*}
    \rho(t,x):=\begin{cases}
    1,&x_2\geq 2t+f(t,x_1,2),\\
    \frac{1}{2}(X_t^{-1}(x))_2,&x\in\mathscr{U}_t,\\
    -1,&x_2\leq -2t+f(t,x_1,-2)
    \end{cases}
\end{align*}
for $t\in(0,T)$ and $\rho(0,x):=\rho_0(x)$. Here $(X_t^{-1}(x))_2$ denotes the second component of $X_t^{-1}(x)$. Observe that $\rho$ is continuous except at points $(0,x_1,\gamma_0(x_1))$, $x_1\in\T$, and piecewise $\cC^1$ with the exceptional set being $\partial\mathscr{U}\subset [0,T)\times\T\times\R$. Moreover, as long as $t$ is positive $\rho(t,\cdot)$ is Lipschitz continuous and there exists a constant $C_0>0$ depending on the initial data such that
\begin{equation}\label{eq:bound_on_nabla_rho}
    \abs{\nabla\rho(t,x)}\leq \frac{C_0}{t}\mathbbm{1}_{\mathscr{U}_t}(x)
\end{equation}
for all $(t,x)\notin \partial \mathscr{U}$. 

Moreover, standard elliptic estimates show that $v$ defined through \eqref{eq:biot_savart}, \eqref{eq:initial_velocity_away_from_interface}, \eqref{eq:initial_velocity_at_interface} is the unique $L^2$ solution of the last two equations of \eqref{eq:subequation}. 

The stated log-Lipschitz continuity of $v(t,\cdot)$, $t>0$ is a consequence of the Biot-Savart operator acting on a compactly supported $L^\infty$-vorticity, cf. \cite{Marchioro_Pulvirenti}. In addition it is easy to see that also $v:[0,T)\times\T\times\R\rightarrow\R^2$ is continuous except at the one-dimensional set $\set{(0,x_1,\gamma_0(x_1)):x_1\in\T}$.

Hence we have shown properties \ref{eq:property_i_mainthm}, \ref{eq:property_ii_mainthm} of Theorem \ref{thm:main}. Moreover, observe that property \ref{eq:property_iii_mainthm} holds by construction with $\gamma_t$ given by
\[
\gamma_t(x_1,h):=t2h+f(t,x_1,2h),~x_1\in\T,~h\in[-1,1].
\]
It thus remains to show that the first equation of \eqref{eq:subequation} and the entropy balances \eqref{eq:entropy_with_equality} are satisfied. 

The regularity of $\rho$ implies
\[
\int_{\T\times\R}\rho(t,\cdot) v(t,\cdot)\cdot\nabla \varphi\:dx=-\int_{\T\times\R}v(t,\cdot)\cdot\nabla \rho(t,\cdot)\varphi\:dx
\]
for all $t\in(0,T)$, $\varphi\in\cC^\infty(\T\times\R)$. It follows that $(\rho,v)$ is a solution in the sense of Definition \ref{def:solution_ipmsub} if and only if the transport form
\begin{equation}\label{eq:transport_form}
\partial_t\rho+v\cdot\nabla \rho +2\rho\partial_{x_2}\rho=0
\end{equation}
of the equation is satisfied pointwise in $(0,T)\times\T\times\R\setminus\partial\mathscr{U}$. 

At points $(t,x)\notin \overline{\mathscr{U}}$ equation \eqref{eq:transport_form} trivially holds true. Inside $\mathscr{U}$ one can check that the computations in Section \ref{sec:ansatz_1} are possible showing that \eqref{eq:transport_form} is equivalent to \eqref{eq:equation_for_f}. Note that in Section \ref{sec:ansatz_1} we have formally assumed that $X_t$ are global diffeomorphisms $\T\times\R\rightarrow\T\times\R$, but as the reader can easily see, it is enough to have transformations from $\Omega_0$ to the corresponding $\mathscr{U}_t$.  

Observing that also the computations in Section \ref{sec:transformation_v} are legal in our scenario one sees that \eqref{eq:transport_form} on $\mathscr{U}$ is indeed equivalent to \eqref{eq:equation_for_f_after_conclusion}. 

Finally, let $\eta:\R\rightarrow\R$ be an arbitrary Lipschitz continuous function and define $Q:\R\rightarrow\R$,
\[
Q(u):=\int_0^u2\eta'(s)s\:ds,
\]
which is also Lipschitz continuous when restricted to any compact interval of $\R$. Consequently we have enough regularity to deduce \eqref{eq:entropy_with_equality} by multiplying \eqref{eq:transport_form} with $\eta'(\rho(t,x))$ and applying chain rule.
This finishes the proof of Theorem \ref{thm:main}.
\end{proof}

\appendix

\section{The abstract Cauchy-Kovalevskaya Theorem}\label{sec:CK_proof}

\begin{proof}[Proof of Theorem \ref{thm:abstract_CK}] As indicated in Section \ref{sec:solving} the proof of Theorem \ref{thm:abstract_CK} is a slight modification of the original proof in \cite{nishida}.

Let $a_0>0$ and set $a_{k+1}:=a_k(1-(k+2)^{-2})$, $k=0,1,\ldots$. Then
\[
a:=\lim_{k\rightarrow\infty}a_k>0.
\]
For $\rho\in(0,\rho_0)$ and $k=0,1,\ldots$ we define the intervals
\[
I_{k,\rho}:=\set{t\in[0,T):c(t)<a_k(\rho_0-\rho)}.
\]
We also define for a function $u$ with $u:I_{k,\rho}\rightarrow B_\rho$ continuous for any $\rho\in(0,\rho_0)$ the norm
\[
M_k[u]:=\sup\set{\norm{u(t)}_\rho\left(\frac{a_k(\rho_0-\rho)}{c(t)}-1\right):\rho\in(0,\rho_0),~t\in I_{k,\rho}}.
\]
Note that for $c(t)=t$ one recovers Nishida's set up. Now one recursively constructs the sequence
\[
u_0(t):=0, \quad u_{k+1}(t):=\begin{cases}
\frac{1}{a(t)}\int_0^tF(u_k(s),s)\:ds,&t\in(0,T),\\
0,&t=0.
\end{cases}
\] 

We claim that for $a_0$ chosen sufficiently small the recursion is well-defined, that each $u_k:I_{k,\rho}\rightarrow B_\rho$ is continuous with $\norm{u_k(t)}_\rho<R/2$ for $t\in I_{k,\rho}$, $\rho\in(0,\rho_0)$, and that 
\begin{equation}\label{eq:condition_on_lambdaks}
\lambda_{k-1}:=M_k[u_{k}-u_{k-1}]\leq (4Ka_0)^{k-1}a_0,
\end{equation}
where $K>0$ is the constant appearing in \eqref{eq:relation_between_functions_abc}.

We first of all note that $u_1(t)$ exists and satisfies the stated continuity condition due to assumptions \ref{cond:F_well_defined}, \ref{cond:F_0_well_defined}. Moreover, for $t\in I_{0,\rho}$ there holds $\norm{u_1(t)}_\rho<a_0$. Thus we pick $a_0$ at least smaller than $R/2$. One also easily checks that $\lambda_0\leq a_0$.

From now on we proceed by induction. Assume that the recursion with the above stated properties is possible up to some $k\geq 1$. Then it is clear that also $u_{k+1}:I_{k+1,\rho}\rightarrow B_\rho$ is well-defined, as well as continuous on the open interval $I_{k+1,\rho}\setminus \{0\}$ for any $\rho\in(0,\rho_0)$.

If we assume for now that \eqref{eq:condition_on_lambdaks} also holds true for $\lambda_k$, then for $t\in I_{k+1,\rho}$ we obtain in analogy to \cite{nishida} the estimate
\begin{align*}
\norm{u_{k+1}(t)}_\rho&\leq\sum_{j=0}^k\lambda_j\left(\frac{a_j(\rho_0-\rho)}{c(t)}-1\right)^{-1}\leq  \sum_{j=0}^k\lambda_j\left(\frac{a_j}{a_{j+1}}-1\right)^{-1}\\
&\leq a_0\sum_{j=0}^k(4Ka_0)^j (j+2)^2<R/2
\end{align*}
by choice of $a_0$ independent of $k$. Moreover, the first inequality in the above line of estimates applied at times $t>0$ with $c(t)<\frac{a}{2}(\rho_0-\rho)$ also gives
\begin{equation*}
\norm{u_{k+1}(t)}_{\rho}\leq c(t)\sum_{j=0}^k\frac{\lambda_j}{a_j(\rho_0-\rho)-c(t)}\leq \frac{2a_0c(t)}{a(\rho_0-\rho)}\sum_{j=0}^k(4Ka_0)^j,
\end{equation*}
which shows that $u_{k+1}$ is also continuous with respect to $\norm{\cdot}_\rho$ at $t=0$. 

To finish the induction 
it thus remains to show \eqref{eq:condition_on_lambdaks} for $\lambda_k$. The clever move is to use the contraction property with a different Banach space at each time $\tau$ inside the integral. Namely exactly as in \cite[p. 631]{nishida}, the contraction property of $F$ (Theorem~\ref{thm:abstract_CK} part \ref{cond:F_lipschitz}) with  $\rho(\tau):=\frac{1}{2}\left(\rho_0-\frac{c(\tau)}{a_k}+\rho\right)$ and the definition of $\lambda_{k-1}$ lead to
\begin{align*}
\norm{u_{k+1}(t)-u_k(t)}_\rho\leq \frac{4\lambda_{k-1}a_k}{a(t)}\int_0^t\frac{b(\tau)c(\tau)}{(a_k(\rho_0-\rho)-c(\tau))^2}\:d\tau
\end{align*}
for $t\in I_{k,\rho}$. At this point we use \eqref{eq:relation_between_functions_abc} and change of variables to obtain
\begin{align*}
\norm{u_{k+1}(t)-u_k(t)}_\rho\leq 4\lambda_{k-1}a_k Kc(t)\int_0^{c(t)}\frac{1}{(a_k(\rho_0-\rho)-\xi)^2}\:d\xi,
\end{align*}
from where one can conclude $\lambda_k\leq 4K\lambda_{k-1}a_0$ by following \cite{nishida} again.

Now Theorem \ref{thm:abstract_CK} follows as in \cite{nirenberg,nishida}. 
\end{proof}

\section{More on Otto's relaxation}\label{sec:otto_relaxation_appendix}

We here add some more details regarding the fifth step of Otto's relaxation \cite{otto1999} in the general non-flat case, which has only been sketched in Section \ref{sec:otto}.

Before doing that we will quickly convince ourselves that the setting in \cite{otto1999} is indeed equivalent to the formulation of IPM considered in our paper. Otto considers the equations
\begin{align}\begin{split}\label{ottos}
\partial_t s + u\cdot\nabla s =&0,\\
\nabla\cdot u =&0,\\
u=&-\nabla p +se_2,\end{split}
\end{align}
which correspond with \cite{otto1999}-(1.1)-(1.2) and the first equation on page 875 of \cite{otto1999} with $\lambda=1$. The parameter $\lambda$ in \cite{otto1999} is the quotient of the mobilities. In our case, we have taken both mobilities equal to one and then $\lambda=1$.  More importantly, in \cite{otto1999}, \begin{align}\label{svalues}s=\{0,1\},\end{align} however \begin{align}\label{rhovalues}\rho=\{-1,1\}\end{align} in our case.  

Let us see how we can go from \eqref{eq:ipm},\eqref{rhovalues} to \eqref{ottos}-\eqref{svalues}. Firstly we define 
\begin{align*}
    \bar{s}=\frac{1-\rho}{2}, && \rho=1-2\bar{s}
\end{align*}
thus
\begin{align*}
    \pa_t\bar{s}+v\cdot\nabla \bar{s}=&0\\
    \nabla \cdot v=&0\\
    v=&-\nabla (p+x_2)+2\bar{s}e_2,
\end{align*}
with  $$ \bar{s}=\{0,1\}.$$

We define $\bar{u}=v/2$ and $\bar{\Pi}=(p+x_2)/2$ which yields
\begin{align*}
    \pa_t\bar{s}+2\bar{u}\cdot\nabla \bar{s}=&0\\
    \nabla \cdot \bar{u}=&0\\
    \bar{u}=&-\nabla \bar{\Pi}+\bar{s}e_2,
\end{align*}

Finally we take $s(x,t)=\bar{s}(x,t/2)$, $u(x,t/2)=\bar{u}(x,t/2)$ and $\Pi(x,t)=\bar{\Pi}(x,t/2)$ thus
\begin{align*}
    \pa_t s+u\cdot\nabla s=&0\\
    \nabla \cdot u=&0\\
    u=&-\nabla \Pi+s e_2,
\end{align*}
with $s=\{0,1\}$, which agrees with \eqref{ottos}-\eqref{svalues} (up to a relabeling of the pressure). Therefore, if we show that \eqref{ottos}-\eqref{svalues} relaxes to 
\begin{align}\begin{split}\label{srelajada}
    \pa_t s +u \cdot \nabla s +\pa_{x_2} s-2 s\pa_{x_2} s =&0,\\
    \nabla \cdot u =&0,\\
    u=&-\nabla \Pi +s e_2,\end{split}
\end{align}
with $s\in [0,1]$, by undoing the previous transformations, we  see that \eqref{eq:ipm},\eqref{rhovalues} relaxes
to 
\begin{align}\begin{split}\label{ipmrelajada}
    \pa_t \rho +v\cdot \nabla \rho+2\rho\pa_{x_2}\rho=&0,\\
    \nabla\cdot v =&0,\\
    v=&-\nabla p -\rho e_2,\end{split}
\end{align}
with $\rho\in [-1,1].$

Next, we begin our formal discussion with the outcome of the fourth step of Otto, after which there exists for each $h>0$ a sequence of ``coarse grained'' functions $\{\theta^{k}\}_{k=0}^{N(h)}$ that are characterized by the following JKO scheme (which we understand as a minimizing movements scheme with respect to the Wasserstein distance): 
   
   $\theta^{(k+1)}$ is the minimizer in $K$ of
   \begin{align}\label{eq:JKO_in_appendix}
   \frac{1}{2}\text{dist}^2(\theta^{(k)},\theta)+\frac{1}{2}\text{dist}^2(1-\theta^{(k)},1-\theta)
   -h\int \theta(x)x_2
   \end{align}
   where the set $K$ consists of measurable $\theta$ taking values in $[0,1]$ and such that $\int \theta=\int s(x,0)$, and 
   $\text{dist}^2(\theta_0,\theta_1)$ is the $L^2$-Wasserstein distance
   \begin{align*}
       \text{dist}^2(\theta_0,\theta_1)=\inf_{\Phi\in I(\theta_0,\theta_1)}\int \theta_0(x)|\Phi(x)-x|^2dx
   \end{align*}
   with
   \begin{align*}
       I(\theta_0,\theta_1)=\{\Phi\, : \, \int \theta_1(y)\zeta(y)dy=\int \theta_0(x)\zeta(\Phi(x))dx \quad \forall \zeta \in \cC^0_0\}. 
   \end{align*}
   In the definition of $I(\theta_0,\theta_1)$, we have been deliberately imprecise and defer the reader to \cite{otto1999} for the proper definition. Even more,  
   in order to make the exposition more clear in the following we will assume that the minimizer exists, that it is smooth and that it satisfies pointwise the corresponding  Monge-Ampere equation. I.e.,
   \begin{align*}
       I(\theta_0,\theta_1)=\{\Phi \text{ diffeomorphism} : (\theta_1\circ \Phi)(x) J_{\Phi}(x)=\theta_0(x)\}. 
   \end{align*}  
   Here $J_\Phi$ denotes the Jacobian determinant $\det D\Phi$.

As explained in Section \ref{sec:otto} our goal is to show, on a formal level, that the limit as $h\rightarrow 0$, we will assume that it exists in the first place, of the functions 
\[
\theta_h(x,t):=\theta^{(k)}(x),\quad t\in[kh,(k+1)h))
\]
is characterized by system \eqref{srelajada}.

We begin with the Euler-Lagrange equation of \eqref{eq:JKO_in_appendix}. For a given $\theta_0\in K$ let $\theta_1$ be the minimizer in $K$ of 
\begin{align*}
    F[\theta]\equiv\frac{1}{2}\text{dist}^2(\theta_0,\theta)+\frac{1}{2}\text{dist}^2(1-\theta_0,1-\theta)
   -h\int \theta(x)x_2.
\end{align*}
Then we have that 
$$D_\theta F[\theta_1]\psi= \left.\frac{d}{d\tau}F[\theta_1+\tau\psi]\right|_{\tau=0}=0,$$
where we simply assume that $\theta_1+\tau\psi\in K$, i.e. we in particular consider $\psi$ with $\int\psi =0$. In order to compute $D_\theta F[\theta_1]\psi$ we first look at $D_\theta \text{ dist}^2(\theta_0,\theta_1)\psi$. Let $\Phi^{\tau}_0\in I(\theta_0,\theta_1+\tau\psi)$ be such that
\begin{align*}
     \text{dist}^2 (\theta_0,\theta_1+\tau\psi)=\inf_{\Phi\in I(\theta_0,\theta_1+\tau\psi)}
    \int \theta_0(x)|\Phi(x)-x|^2 dx=\int \theta_0(x)|\Phi^{\tau}_0(x)-x|^2dx.
\end{align*}
We define $$w\circ \Phi^0_0=\left.\frac{d \Phi^\tau_0}{d\tau}\right|_{\tau=0},$$
thus
\begin{align}\label{tres}
    \frac{1}{2}D_\theta \text{ dist}^2(\theta_0,\theta_1)\psi=\int \theta_0(x)(\Phi^0_0(x)-x)\cdot (w\circ \Phi_0^0)(x) dx.
\end{align}

We next compute for which $w$ we have $\Phi^\tau_0\in I(\theta_0,\theta_1+\tau \psi)$.
There holds
\begin{align}\label{bI}
    J_{\Phi^\tau_0}(x)((\theta_1+\tau\psi)\circ \Phi^\tau_0)(x)=\theta_0(x).
\end{align}
Taking a $\tau-$derivative in \eqref{bI} and evaluating at $\tau=0$ yields,
\begin{align*}
   J_{\Phi_0^0} \text{div }w\circ \Phi^0_0\theta_1\circ \Phi^0_0+J_{\Phi^0_0}w\circ\Phi^0_0\cdot \nabla \theta_1\circ\Phi_0^0+J_{\Phi^0_0}\psi\circ \Phi^0_0=0 
\end{align*}
which reduces to
\begin{align}\label{cinco}
    \text{div }(w\theta_1)+\psi=0.
\end{align}

In addition,  $\Phi^0_0$ minimizes $$\int \theta_0(x)|\Phi(x)-x|^2 dx$$ in $I(\theta_0,\theta_1)$. So for every family of flows 
$(\Phi^0_\delta)\in I(\theta_0,\theta_1)$ we have that
\begin{align*}
    \left.\frac{d}{d\delta} \int \theta_0(x)|\Phi^0_\delta(x)-x|^2 dx\right|_{\delta=0} =0.
\end{align*}
That is
\begin{align*}
    \int \theta_0(x)(\Phi^0_0(x)-x)\cdot (\bar{w}\circ \Phi^0_0)(x)dx=0,
\end{align*}
where if $\Phi_\delta$ is the flow of a vector field $\bar{w}$, 
\begin{align}\label{uno}
    \bar{w}\circ \Phi^0_0&=\left.\frac{d \Phi^0_\delta}{d\delta}\right|_{\delta=0},\\
\text{div} (\theta_1 \bar{w})&=0\label{dos}.
\end{align}

The condition \eqref{dos}, equivalent to  $\Phi_\delta \in I(\theta_0,\theta_1)$, is deduced by 
differentiating 
\begin{align}\label{seis}
    J_{\Phi_\delta^0}(x)(\theta_1\circ \Phi_\delta^0)(x)=\theta_0(x)
\end{align}
with respect to $\delta$. 

Therefore, it holds that 
\begin{align*}
    0=\int \theta_0(x)(\Phi^0_0(x)-x)\cdot (\bar{w}\circ \Phi^0_0)(x)dx=\int \theta_1(x)\bar{w}(x)\cdot (x-\left(\Phi^0_0\right)^{-1}(x)) dx,
\end{align*}
where in the last equality we have used the definition of $I(\theta_0,\theta_1)$.
Since $\bar w$ is an arbitrary vector field,
Hodge decomposition implies that
\begin{align}\label{cuatro}
    x-\left(\Phi^0_0\right)^{-1}(x)=\nabla a(x)
\end{align}
for some function $a$. In order to avoid technicalities we here have implicitly assumed that $\theta_1$ does not vanish.

From \eqref{tres}, \eqref{cinco} and \eqref{cuatro} we see that
\begin{align*}
    \frac {1}{2} D_\theta \text{ dist}^2 (\theta_0,\theta_1)\psi =&\int \theta_1(x)w(x)\cdot\nabla a(x) dx
    =-\int \nabla \cdot (\theta_1 w)(x)a(x)dx\\
    =& \int \psi(x)a(x).
\end{align*}
We have obtained that
\begin{align*}
    \frac {1}{2} D_\theta \text{ dist}^2 (\theta_0,\theta_1)\psi =& \int \psi(x)a(x)\\
    \Phi_0^0(x)=& x+(\nabla a \circ \Phi^0_0)(x).
\end{align*}
Similar computations yield
\begin{align*}
    \frac {1}{2} D_\theta \text{ dist}^2 (1-\theta_0,1-\theta_1)\psi =& -\int \psi(x)\bar{a}(x)\\
    \bar{\Phi}_0^0(x)=& x+(\nabla \bar{a} \circ \bar{\Phi}^0_0)(x),
\end{align*}
and putting everything together we arrive at 
\begin{align}\label{siete}
    &D_\theta F[\theta_1]\psi=\int (a(x)-\bar{a}(x)-hx_2)\psi(x)dx=0
\end{align}
for all $\psi$ with $\int\psi =0$.
Moreover, since $\Phi^0_0\in I(\theta_0,\theta_1)$ and $\bar{\Phi}^0_0\in I(1-\theta_0,1-\theta_1)$, 
\begin{align}
    \theta_1(x)&=J_{(\Phi_0^0)^{-1}}(x)\theta_0(x-\nabla a(x)),\label{ocho}\\
    (1-\theta_1)(x)&=J_{(\bar{\Phi}_0^0)^{-1}}(x)(1-\theta_0)(x-\nabla \bar{a}(x)).\label{nueve}
\end{align}

Note that so far we have omitted the $h$-dependence of the functions $a,\bar a, \Phi_0^0,\bar{\Phi}_0^0$ in our notation. We continue doing so when introducing $p=\frac{a}{h},\bar{p}=\frac{\bar{a}}{h}$ which, up to a constant satisfy
$$ p-\bar{p}=x_2$$
by \eqref{siete}. Note that the constant is irrelevant since only derivatives of $p$, $\bar p$ will play a role. To obtain a formal limit as $h\rightarrow 0$, we will assume in the following that the $h$-dependent functions $p$, $\bar{p}$ have a well-defined $\cC^2$ limit, which will again be denoted by $p$, $\bar p$.

Now we take said limit. On one hand we have from \eqref{ocho} that
\begin{align*}
    &\frac{\theta_1(x)-\theta_0(x)}{h}= \frac{J_{(\Phi^0_0)^{-1}}(x)\theta_0(x-h\nabla p(x))-\theta_0(x)}{h}\\&\hspace{35pt}=\frac{(J_{(\Phi^0_0)^{-1}}(x)-1)\theta_0(x-h\nabla p(x))+\theta_0(x-h\nabla p(x))-\theta_0(x)}{h}.
\end{align*}
Recall that $\Phi_0^0$ is linked to $p$ via \eqref{cuatro} and thus since $\Phi_0^0(x)\rightarrow x$, it holds that 
\begin{align*}
    J_{(\Phi_0^0)^{-1}}(x)-1=-h\Delta p(x),
\end{align*}
at first order in $h$. Thus when letting $h\rightarrow 0$ in the difference quotient we arrive at 
\begin{align}\label{diez}
    \pa_t \theta = -\Delta p \,\theta-\nabla p \cdot \nabla \theta.
\end{align}

On the other hand we have from \eqref{nueve} that
\begin{align*}
\frac{\theta_1(x)-\theta_0(x)}{h}=& \frac{1-J_{(\bar{\Phi}^0_0)^{-1}}(x)+J_{(\bar{\Phi}^0_0)^{-1}}(x)\theta_0(x-h\nabla \bar{p}(x))-\theta_0(x)}{h}.
\end{align*}
Passing to the limit yields
\begin{align}\label{once}
  \pa_t\theta=  \Delta \bar{p}-\Delta \bar{p}\theta -\nabla \theta\cdot \nabla \bar{p}.
\end{align}

In order for \eqref{diez} and \eqref{once} to agree, there holds
\begin{align*}
    \Delta \bar{p} -\nabla\cdot (\nabla \bar{p} \theta)=-\nabla\cdot (\nabla p \theta)
\end{align*}
and since $p=\bar{p}+x_2$ we have that
\begin{align}\label{veinticuatro}
    \Delta \bar{p}=-\nabla\cdot (\nabla x_2 \theta)=-\pa_{x_2}\theta.
\end{align}

Therefore, from \eqref{once} and \eqref{veinticuatro},
\begin{align*}
    \pa_t\theta=& -\pa_{x_2}\theta -\nabla \cdot (\nabla \bar{p}\theta)\\
    =& -\pa_{x_2}\theta -\nabla\cdot (\left(\nabla\bar{p}+\theta e_2\right)\theta)+\nabla \cdot (\theta^2e_2)
\end{align*}
To finish we define $u=\nabla \bar{p}+\theta e_2$, which clearly satisfies $\nabla \cdot u=0$, to get
\begin{align*}
    \pa_t \theta +u\cdot \nabla  \theta +\pa_{x_2}\theta -2\theta \pa_{x_2}\theta=&0,\\
    u=&\nabla \bar{p}+\theta e_2,\\
    \nabla \cdot u =&0,
\end{align*}
which agrees with \eqref{srelajada}.

\section{Rigorous energy dissipation}\label{sec:rigorous_dissipation}

In Section \ref{sec:transfer_to_subsolutions} equation \eqref{eq:energy_dissipation} we have formally computed the decay rate of the total potential energy. For completeness we give sufficient conditions when this computation is justified. Also for completeness we show that the subsolution given by Theorem \ref{thm:main} indeed satisfies the sufficient conditions. 
\begin{lemma}\label{lem:rigorous_dissipation} Let $\rho_0\in L^1_{loc}(\T\times\R)$ be some initial data and suppose that the pair of functions $(\rho,m)\in L^1_{loc}((0,T)\times\T\times\R;\R\times\R^2)$ satisfies
\begin{align*}
    \partial_t\rho+\divv m=0,\quad \rho(0,\cdot)=\rho_0
\end{align*}
on $(0,T)\times\T\times\R$ in the sense of distributions. If there exists $\alpha>0$ such that 
\begin{gather*}
m_2,~(\rho-\rho_0)x_2,~(\rho-\rho_0)\abs{x_2}^{1+\alpha}\in \cC^0([0,T);L^1(\T\times\R)),\\
m_2\abs{x_2}^\alpha\in L^1((0,T)\times\T\times\R),
\end{gather*}
then the relative potential energy defined in \eqref{eq:relative_potential_energy} belongs to $\cC^1([0,T))$ and there holds
\[
\frac{d}{dt}E_{rel}(t)=\int_{\T\times\R}m_2(t,x)\:dx.
\]
\end{lemma}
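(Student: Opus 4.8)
The plan is to test the continuity equation against truncations of the linear weight $x_2$ and to send the truncation parameter to infinity, using the weighted integrability hypotheses to absorb the error terms produced by the cutoff. Because $\rho_0$ is independent of $t$, I would first record that the difference $w:=\rho-\rho_0$, together with $m$, solves $\partial_t w+\divv m=0$ distributionally on $(0,T)\times\T\times\R$ and that the initial condition $\rho(0,\cdot)=\rho_0$ exactly cancels the boundary term, so that $\int_0^T\int_{\T\times\R}w\,\partial_t\varphi+m\cdot\nabla\varphi\:dx\:dt=0$ for every $\varphi\in\cC_c^\infty([0,T)\times\T\times\R)$.

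Next, for $\chi\in\cC_c^\infty([0,T))$ and a standard cutoff family $\theta_R\in\cC^\infty(\R)$ with $0\le\theta_R\le1$, $\theta_R\equiv1$ on $[-R,R]$, $\spt\theta_R\subset[-2R,2R]$ and $\abs{\theta_R'}\le C/R$, I would use the admissible test function $\varphi(t,x):=\chi(t)\,x_2\,\theta_R(x_2)$, which is smooth and compactly supported in $[0,T)\times\T\times\R$ since $\T$ is compact and $\theta_R$ has compact support. As $\varphi$ depends only on $t$ and $x_2$, only $m_2$ survives and the weak formulation becomes
\[
\int_0^T\chi'(t)\!\int_{\T\times\R}\! w\,x_2\theta_R(x_2)\:dx\:dt+\int_0^T\chi(t)\!\int_{\T\times\R}\! m_2\,\theta_R(x_2)\:dx\:dt+\int_0^T\chi(t)\!\int_{\T\times\R}\! m_2\,x_2\theta_R'(x_2)\:dx\:dt=0.
\]

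I would then pass to the limit $R\to\infty$ term by term. On $\spt\theta_R'$ one has $R\le\abs{x_2}\le 2R$, hence $\abs{x_2\theta_R'(x_2)}\le C_\alpha R^{-\alpha}\abs{x_2}^\alpha$, so the third term is $\le C_\alpha R^{-\alpha}\norm{\chi}_\infty\int_0^T\!\int_{\T\times\R}\abs{m_2}\abs{x_2}^\alpha\:dx\:dt\to0$ by the hypothesis $m_2\abs{x_2}^\alpha\in L^1$. In the second term $m_2\theta_R\to m_2$ pointwise with $\abs{m_2\theta_R}\le\abs{m_2}$, and since $m_2\in\cC^0([0,T);L^1)$ the quantity $t\mapsto\norm{m_2(t,\cdot)}_{L^1}$ is bounded on $\spt\chi$, making $m_2$ integrable over $\spt\chi\times\T\times\R$; dominated convergence then yields the limit $\int_0^T\chi(t)g(t)\:dt$ with $g(t):=\int_{\T\times\R}m_2(t,x)\:dx$. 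In the first term $\abs{w\,(x_2\theta_R-x_2)}\le\abs{w}\abs{x_2}\mathbbm{1}_{\{\abs{x_2}\ge R\}}\le R^{-\alpha}\abs{w}\abs{x_2}^{1+\alpha}$, and $\sup_{t\in\spt\chi}\norm{(\rho(t,\cdot)-\rho_0)\abs{x_2}^{1+\alpha}}_{L^1}<\infty$ by the corresponding hypothesis, so that term converges to $\int_0^T\chi'(t)E_{rel}(t)\:dt$. Hence $\int_0^T\big(\chi'(t)E_{rel}(t)+\chi(t)g(t)\big)\:dt=0$ for all $\chi\in\cC_c^\infty([0,T))$, i.e. $E_{rel}'=g$ in $\cD'((0,T))$.

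Finally, I would upgrade the regularity: $E_{rel}\in\cC^0([0,T))$ because $t\mapsto(\rho(t,\cdot)-\rho_0)x_2$ is continuous into $L^1$, and $g\in\cC^0([0,T))$ because $m_2\in\cC^0([0,T);L^1)$; a continuous function whose distributional derivative is continuous is $\cC^1$, and writing $E_{rel}(t)=E_{rel}(t_0)+\int_{t_0}^tg$ for $0<t_0<t$ and letting $t_0\to0^+$ extends the identity up to $t=0$, giving $E_{rel}\in\cC^1([0,T))$ with $E_{rel}'=g=\int_{\T\times\R}m_2\,dx$. I expect the only genuinely delicate point to be the cutoff-error term $\int\!\int m_2x_2\theta_R'$: since $\rho-\rho_0$ and $m$ are only locally integrable a priori one cannot test directly with the unbounded weight $x_2$, and the decay $\abs{x_2\theta_R'}\lesssim R^{-\alpha}\abs{x_2}^\alpha$ on the annulus where $\theta_R$ varies is precisely the gain the weighted hypotheses are designed to supply.
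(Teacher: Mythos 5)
Your proof is correct and follows essentially the same route as the paper: both truncate the weight $x_2$ by a cutoff at scale $R$, absorb the truncation errors at rate $R^{-\alpha}$ using the hypotheses on $(\rho-\rho_0)\abs{x_2}^{1+\alpha}$ and $m_2\abs{x_2}^{\alpha}$, and identify the derivative through the continuity-in-$L^1$ assumptions. The only difference is cosmetic: the paper argues via difference quotients $h^{-1}\big(E_{rel}(t+h)-E_{rel}(t)\big)$ with an approximation of the indicator of $[t,t+h]$ in time, whereas you test with smooth $\chi(t)$ and then upgrade the distributional identity $E_{rel}'=g$ to $\cC^1([0,T))$; both handle the time variable equivalently.
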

\begin{proof}
Let $R>0$ and $\varphi_R:\R\rightarrow[0,1]$ be a cutoff function with $\varphi_R(x_2)=1$ for $\abs{x_2}\leq R$, $\varphi_R(x_2)=0$ for $\abs{x_2}\geq 2R$ and $\abs{\varphi_R'(x_2)}\leq 2R^{-1}$, $x_2\in\R$.

We abbreviate $E(t)=E_{rel}(t)$ and define
\[
E_R(t):=\int_{\T\times\R}(\rho(t,x)-\rho_0(x))x_2\varphi_R(x_2)\:dx.
\]
Note that $E(t)$, $E_R(t)$ are well-defined at every time $t\in[0,T)$ and there holds
\begin{align*}
    \abs{E(t)-E_R(t)}\leq \norm{(\rho(t,\cdot)-\rho_0)\abs{x_2}^{1+\alpha}}_{L^1(\T\times\R)}\frac{1}{R^\alpha}=O(R^{-\alpha})
\end{align*}
uniformly in time as $R\rightarrow\infty$.
Thus 
\begin{align*}
    h^{-1}(E(t+h)-E(t))&=h^{-1}(E_R(t+h)-E_R(t))+h^{-1}O(R^{-\alpha}).
\end{align*}
Moreover, the assumed continuity conditions and approximation of the indicator function of $[t,t+h]$ imply
\begin{align*}
    E_R(t+h&)-E_R(t)=\int_t^{t+h}\int_{\T\times\R}m\cdot \nabla (x_2\varphi_R(x_2))\:dx\:ds\\
    &=\int_t^{t+h}\int_{\T\times\R}m_2\:dx\:ds+\int_t^{t+h}\int_{\T\times\R}m_2(\varphi_R(x_2)-1+x_2\varphi_R'(x_2))\:dx\:ds.
\end{align*}
Now the latter term can be bounded by $5R^{-\alpha}\norm{m_2\abs{x_2}^\alpha}_{L^1((0,T)\times\T\times\R)}$ implying that 
\begin{align*}
    h^{-1}(E(t+h)-E(t))=h^{-1}\int_t^{t+h}\int_{\T\times\R}m_2\:dx\:ds+h^{-1}O(R^{-\alpha}).
\end{align*}
The statement follows.
\end{proof}

Let $(\rho,v)$ be the solution constructed in Theorem \ref{thm:main} and set
\[
m=\rho v-(1-\rho^2)e_2.
\]
\begin{lemma} In addition to the properties stated in Theorem \ref{thm:main} the velocity field $v$ satisfies
    $$
    \abs{v(t,x)}\leq Ce^{-\abs{x_2}}
    $$ whenever $\abs{x_2}\geq R$ for constants $C,R>0$ independent of $t$. The pair $(\rho,m)$ in particular satisfies the conditions of Lemma \ref{lem:rigorous_dissipation}.
\end{lemma}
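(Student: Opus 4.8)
The plan is to use that the vorticity $-\partial_{x_1}\rho(t,\cdot)$ is supported in a fixed bounded strip and has vanishing $x_1$-mean on every horizontal line. After an integration by parts in the Biot--Savart integral \eqref{eq:biot_savart} the kernel $K$ is replaced by $\partial_1K$, and the non-integrable tail of $\rho$ is killed by $x_1$-periodicity; the point is that $\partial_1K$ --- in contrast to $K$, whose first component tends to $\mp\tfrac1{4\pi}$ as $\abs{z_2}\to\infty$ --- decays exponentially away from the strip. Concretely, I would first fix $R_0>0$ with $\mathscr{U}_t\subset\T\times(-R_0,R_0)$ for all $t\in[0,T)$; this follows at once from the construction in the proof of Theorem \ref{thm:main}, since $\mathscr{U}_t$ is bounded by the graphs $\gamma_t(\cdot,\pm1)=\pm2t+f(t,\cdot,\pm2)$ and $\norm{f(t,\cdot)}_{L^\infty}\le\norm{\gamma_0}_{L^\infty}+T\norm{s_0}_{L^\infty}+\tfrac12T^{1+\alpha}$ because $\norm{\eta_t}_{\rho_0}<1$ by Proposition \ref{prop:conclusion1}. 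Set $R:=R_0+1$ and, for $t>0$, $\tilde\rho(t,x):=\rho(t,x)-\sign(x_2)$, which is supported in $\T\times[-R_0,R_0]$ with $\abs{\tilde\rho}\le2$. Integrating by parts in $z_1$ in \eqref{eq:biot_savart} and using $\int_\T(\partial_1K)(a_1,a_2)\,da_1=0$ (periodicity) gives, for $x\notin\T\times[-R_0,R_0]$,
\[
v(t,x)=-\int_{\T\times(-R_0,R_0)}(\partial_1K)(x-z)\,\tilde\rho(t,z)\,dz .
\]

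Next I would record the elementary bound $\abs{(\partial_1K)(a)}\le Ce^{-\abs{a_2}}$ for $\abs{a_2}\ge1$: both components of $\partial_1K$ are bounded by $C\cosh a_2/(\cosh a_2-\cos a_1)^2$, and $\cosh a_2-\cos a_1\ge\cosh a_2-1\ge c\cosh a_2$ for $\abs{a_2}\ge1$ with $c=1-1/\cosh1>0$, so the quotient is $\le C(\cosh a_2)^{-1}\le Ce^{-\abs{a_2}}$. Plugging this into the displayed formula and using that $\abs{x_2}\ge R$, $\abs{z_2}\le R_0$ force $\abs{x_2-z_2}\ge1$ and $\abs{x_2-z_2}\ge\abs{x_2}-R_0$, I get $\abs{v(t,x)}\le C(R_0)e^{-\abs{x_2}}$ for $\abs{x_2}\ge R$, uniformly in $t\in(0,T)$; at $t=0$ the same follows either from the identical computation with $\rho_0$ or from continuity of $v$ off $\{0\}\times\Gamma_0$. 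This is the asserted tail bound. (Equivalently one may run this through the stream function $\psi=G*(-\partial_{x_1}\rho)$, using that $K_2$ is odd in $a_1$, so the $\sign(x_2)$-part again contributes nothing and $v=\nabla^\perp\psi$ involves only the exponentially decaying $\partial_jK_2$.)

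It then remains to verify the hypotheses of Lemma \ref{lem:rigorous_dissipation}, for which I would take $\alpha=1$. The terms $(\rho-\rho_0)x_2$ and $(\rho-\rho_0)\abs{x_2}^{2}$ are supported in $\T\times(-R_0,R_0)$ for every $t$ and dominated by $2R_0^{2}\mathbbm{1}_{\T\times(-R_0,R_0)}\in L^1$; continuity of $t\mapsto\rho(t,\cdot)$ in $L^1(\T\times(-R_0,R_0))$ holds for $t>0$ by the joint $\cC^1$-dependence of $X_t^{-1}$ and at $t=0$ by $\rho(t,\cdot)\to\rho_0$ a.e.\ plus dominated convergence. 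For $m_2=\rho v_2-(1-\rho^2)$: on $\abs{x_2}\ge R$ one has $1-\rho^2=0$, so $\abs{m_2}\le\abs{v_2}\le Ce^{-\abs{x_2}}$ uniformly in $t$; on $\abs{x_2}<R$, $\abs{m_2}\le\abs{v_2}+1$ and the standard elliptic estimate $\norm{v(t,\cdot)}_{L^2(\T\times\R)}\le C\norm{\rho(t,\cdot)-\sign(\cdot_2)}_{L^2}\le C'$ is uniform in $t$. Combining the two regimes shows $m_2\abs{x_2}\in L^1((0,T)\times\T\times\R)$ and also the continuity of $t\mapsto m_2(t,\cdot)\in L^1(\T\times\R)$ --- on the tail by the uniform exponential majorant and continuity of $(\rho,v)$, and on $\abs{x_2}<R$ because $1-\rho^2\to0$ in $L^1$ while $\rho v_2\to\rho_0v_{0,2}$ in $L^1(\T\times(-R,R))$ by Vitali (a.e.\ convergence off $\Gamma_0$ together with the uniform $L^2$-bound on $v$). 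This is exactly what Lemma \ref{lem:rigorous_dissipation} requires.

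I expect the only genuine difficulty to be the exponential tail itself: since the Biot--Savart kernel on the cylinder $\T\times\R$ does not decay, a direct absolute-value estimate fails for the horizontal component of $v$, and one must exploit the cancellation from $x_1$-periodicity (through the integration by parts / oddness of $K_2$, after subtracting $\sign(x_2)$) in order to gain the factor $e^{-\abs{a_2}}$ from $\partial_1K$. Once the uniform strip $\T\times(-R_0,R_0)\supset\mathscr{U}_t$ and the uniform $L^2$-bound on $v$ are in hand, the remaining estimates are routine.
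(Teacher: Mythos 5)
Your proof is correct, and the key decay estimate is obtained by a genuinely different (and arguably cleaner) mechanism than the paper's. The paper argues component-wise: for $v_2$ it bounds the Biot--Savart integral directly by $\abs{\mathscr{U}_t}\,\norm{\partial_{x_1}\rho(t,\cdot)}_{L^\infty}\,\norm{K_2(x-\cdot)}_{L^\infty(\mathscr{U}_t)}$, exploiting that the factor $\abs{\mathscr{U}_t}\sim t$ compensates the Lipschitz blow-up $\norm{\nabla\rho(t,\cdot)}_{L^\infty}\lesssim t^{-1}$ from \eqref{eq:bound_on_nabla_rho}; for $v_1$, where $K_1$ does not decay, it subtracts the horizontal average $K_1(x-(0,y_2))$ (using that $\partial_{x_1}\rho(t,\cdot)$ has zero mean in $y_1$) and estimates the difference through $\partial_{z_1}K_1$, which does decay. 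You instead integrate by parts once in $z_1$, transferring the derivative from $\rho$ onto the kernel, and subtract $\sign(z_2)$ from the density so that what remains is supported in the fixed strip $\T\times(-R_0,R_0)$; the $\sign$-part disappears because $\partial_1K(\cdot,a_2)$ integrates to zero over a period. This treats both components simultaneously with the single bound $\abs{\partial_1K(a)}\leq Ce^{-\abs{a_2}}$ and needs only $\abs{\tilde\rho}\leq 2$ together with the uniform inclusion $\mathscr{U}_t\subset\T\times(-R_0,R_0)$, rather than the balance between $\abs{\mathscr{U}_t}$ and the Lipschitz constant of $\rho(t,\cdot)$. Both arguments ultimately rest on the same cancellation (the vanishing horizontal mean of the vorticity), just packaged differently. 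A further difference: the paper's proof stops at the decay estimate and leaves the verification of the hypotheses of Lemma \ref{lem:rigorous_dissipation} implicit, whereas you carry it out (uniform $L^2$-bound on $v$, support of $1-\rho^2$, Vitali convergence at $t=0$); that part is routine but correctly executed.
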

\begin{proof}
Regarding the second component one easily sees that 
\begin{align*}
\abs{v_2(t,x)}\leq \abs{\mathscr{U}_t}\norm{\partial_{x_1}\rho(t,\cdot)}_{L^\infty(\T\times\R)}\norm{K_2(x-\cdot)}_{L^\infty(\mathscr{U}_t)}
\end{align*}
which can be bounded by $Ce^{-\abs{x_2}}$ for $\abs{x_2}\geq R$ with constants $C,R>0$ independent of time.

Regarding the first component we can not exploit the decay of the kernel, since $K_1(z)\rightarrow \mp 1$ as $z_2\rightarrow\pm \infty$. Still by substracting vanishing horizontal averages we deduce
\begin{align*}
    \abs{v_1(t,x)}&=\abs{\int_{\T\times\R}\partial_{x_1}\rho(t,y)(K_1(x-y)-K_1(x-(0,y_2))\:dy}\\
    &\leq \abs{\mathscr{U}_t}\norm{\partial_{x_1}\rho(t,\cdot)}_{L^\infty(\T\times\R)}\norm{\partial_{z_1}K_1(x-\cdot)}_{L^\infty(\mathscr{U}^*_t)}\pi,
\end{align*}
where $\mathscr{U}_t^*$ is the set of points obtained by taking all segments between $y\in \mathscr{U}_t$ and $(0,y_2)$. It is only important that those sets are bounded uniformly in time, which allows to argue as above for $v_2$, since $\partial_{z_1}K_1$ now has the required decay.
\end{proof}

\bibliographystyle{abbrv}
\bibliography{references_angel}

\vspace{5pt}
\noindent\textbf{Acknowledgements}. A.C., D.F. and B.G. acknowledge financial support from  the Severo Ochoa Programme for Centres of Excellence Grant CEX2019-000904-S  funded by MCIN/AEI/10.13039/501100011033. A.C. from Grant PID2020-114703GB-I00 funded by MCIN/AEI/10.13039/501100011033 and from a 2023 Leonardo Grant for Researchers and Cultural Creators, BBVA Foundation. The BBVA Foundation accepts no responsibility for the opinions, statements, and contents included in the project and/or the results thereof, which are entirely the responsibility of the authors. D.F. and B.G. from grant PI2021-124-195NB-C32 funded by MCIN/AEI/10.13039/501100011033. D.F. was also partially supported by CAM through the Line of excellence for University Teaching Staff between CM and UAM.  D.F. and B.G. were also partially supported by the ERC Advanced Grant 834728. B.G. has been supported by Mar\'ia Zambrano Grant CA6/RSUE/2022-00097 and is partially also funded by the Deutsche Forschungsgemeinschaft (DFG, German Research Foundation) under Germany's Excellence Strategy EXC 2044-390685587, Mathematics Münster: Dynamics-Geometry\\-Structure. Finally A.C. and D.F. acknowledge financial support from Grants RED2022-134784-T and RED2018-102650-T funded by \\ MCIN/AEI/10.13039/501100011033.

\vspace{10pt}
\noindent \'Angel Castro\\
Instituto de Ciencias Matem\'aticas, CSIC-UAM-UC3M-UCM, 28049 Madrid, Spain\\
angel\_castro@icmat.es

\vspace{10pt}
\noindent Daniel Faraco\\
Departamento de Matem\'aticas, Universidad Aut\'onoma de Madrid, Instituto de Ciencias
Matem\'aticas, CSIC-UAM-UC3M-UCM, 28049 Madrid, Spain\\
daniel.faraco@uam.es

\vspace{10pt}

\noindent Bj\"orn Gebhard\\
Mathematics M\"unster, University of M\"unster, 48149 Münster, Germany\\
bjoern.gebhard@uni-muenster.de

\end{document}